\newcommand{\Path}{}
\newcommand{\figs}{}
\renewcommand{\eval}{ \text{eval} }
\DeclareMathOperator{\shift}{ \text{shift} }
\DeclareMathOperator{\add}{\mu}
\newcommand{\swap}{ \text{swap} }
\renewcommand{\Transfer}[1]{ \text{Tr}_{#1} }
\newcommand{\shiftF}[1]{ \shift_{#1} }
\newcommand{\FlowCat}[1]{ \left[ \text{Left-action} \left( #1 \right) \right] }
\newcommand{\ParaEndoCat}[1]{ \left[ \text{Endo} \left( #1 \right) \right]_{\text{para}} }
\newcommand{\EnrichSGActCat}[1]{ \mathbb{F}_{\text{enrich}} \left( \calT ; #1 \right) }
\renewcommand{\SetCat}{ \textbf{ \text{Set} } }
\renewcommand{\MeasCat}{ \textbf{ \text{Meas} } }
\renewcommand{\VectCat}{ \textbf{ \text{Vec} } }
\renewcommand{\BanachCat}{ \textbf{ \text{Banach} } }
\renewcommand{\ManCat}[1]{ \textbf{ \text{Man}}^{#1} }
\renewcommand{\Topo}{ \textbf{ \text{Topo} }}
\renewcommand{\MetricCat}{ \textbf{ \text{Metric} }}
\renewcommand{\StochCat}{ \textbf{ \text{Stoch} } }
\newcommand{\CGWHS}{ \textbf{ \text{CGWHS}} }
\newcommand{\QuasiBorel}{ \textbf{ \text{QuasiBorel}} }
\newcommand{\Diffeological}{ \textbf{ \text{Diffeology}} }
\title{Dynamical systems as enriched functors}
\begin{document}
	\author{ Suddhasattwa Das \footnotemark[1], Tomoharu Suda \footnotemark[2] \footnotemark[3]}
	\footnotetext[1]{Department of Mathematics and Statistics, Texas Tech University, USA}
	\footnotetext[2]{Department of Applied Mathematics, Tokyo University of Science, Japan}
	\footnotetext[3]{RIKEN Center for Sustainable Resource Science, RIKEN, Japan}
	\date{\today}
	\maketitle
	\begin{abstract}
		This article presents a general 
		description of dynamical systems using the language of enriched functors and enriched natural transformations. This framework is essential to establish the equivalence of three descriptions of dynamics -- a semigroup action on the domain; a parameterized family of endomorphisms; and a transformation of time-space into the collection of endomorphisms. A collection of categorical axioms are presented that provides a complete categorical language to develop dynamical systems theory. None of the assumptions are rooted in specific contexts such as topology and measure spaces. The equivalence of the three descriptions is further used to construct other related notions,such as transfer operators, orbits and shift-spaces. All of these objects are defined by their structural role and universal properties, instead of their usual pointwise definitions.
	\end{abstract}
	
	\begin{keywords} Semi-conjugacy, enriched functor, semigroup, functor, adjoints, equalizer, orbits \end{keywords}
	
	\begin{AMS}	18D25, 18A40, 37M99, 18F60, 37M22, 18A32, 18A25, 37M10 \end{AMS}
	\section{Introduction} \label{sec:intro}
	
	A dynamical system consists of a collection $\Omega$ of states, and a law describing a repeated transition between states along with the progress of time. The collection $\Omega$ may be endowed with a sigma-algebra, a linear structure, a differential structure or a topology. If the dynamics law also conforms to these structures, e.g., it is measurable, linear, differentiable or continuous respectively, then one can study the dynamics in these different contexts. These contexts, namely ergodic, linear, differential and continuous dynamical systems have their own theories and constructions. The classical approach to dynamical systems theory is to discover the local structures within the state space, emerging as a consequence of the dynamics law.
	
	\begin{figure}[!t]
		\centering
		\begin{equation} \label{eqn:thm:8}
			\begin{tikzcd}
				&& \begin{array}{c} \text{Parameterized} \\ \text{endomorphisms} \\ \ParaEndoCat{\Context} \end{array} \arrow[drr, "\text{flow}", "\cong"'] \\
				\begin{array}{c} \text{Enriched} \\ \text{functors} \\ \Time \to \Context \\ \EnrichSGActCat{\Context} \end{array} \arrow[rru, "\text{parametric}", "\cong"'] \arrow[rrrr, "\cong"'] && && \begin{array}{c} \text{Left-} \\ \text{actions} \\ \FlowCat{\Context} \end{array}
			\end{tikzcd}
		\end{equation}
		\caption{\textbf{Three functorial descriptions of dynamics.} One of the main results of the paper, formally stated in Theorem \ref{thm:8}, has been depicted here as a three way equivalence of categories.}
		\label{fig:thm:8} 
	\end{figure}
	
	An alternative to the classic approach is a categorical approach \cite[e.g.]{BehrischEtAl2017dyn, fritz2020Stoch, MossPerrone2022ergdc, DasSuda2024recon, Suda2022Poincare, SchultzEtAl2020dyn, VagnerSpivakLerman2015, LibkindEtAl2021operadic}, which takes a global view of dynamical systems. The narrative then takes a structural or synthetic form, in which various properties such as invariance, ergodicity and conjugacy have simpler and more intuitive definitions in terms of their structural roles, instead of being a consequence of their usual pointwise definitions. The synthetic description of category theory often makes several statements and proofs shorter, almost tautological. The complexity of the proof is transmitted into the challenge of establishing borne by the categorical structures. 
	
	One of the standard categorical descriptions of dynamics is as a functor $\Phi$ from a semigroup into a context category. This provides the most direct and concise representation as a categorical object. We discuss later in diagram \eqref{eqn:chain:3} that this simple idea allows the collection of linear, differentiable, topological and measurable dynamical systems, and Markov processes to become categories of its own, with a nesting of these categories in the order presented. Thus the various arrangements of dynamical systems created by different contexts and different time-structure are all categories, and are inter-related functorially. The description of a dynamical system as a functor encodes its algebraic property of being the action of a semi-group. However, this description is not adequate to encode the many other descriptions of a dynamical system, each suited for a specific purpose. A dynamical law may be presented as a map $\Phi : \TimeB \times \Omega \to \Omega$, to directly indicate the dependence of the future state on the present time and state. Here $\TimeB$ represents some structure of time. These two interpretations of a dynamical system have lead to mostly separate lines of development of categorical dynamics, such as the former approach in \cite[e.g.]{MossPerrone2022ergdc, DasSuda2024recon}, and the latter as open dynamical systems \cite[e.g.]{VagnerSpivakLerman2015, FongEtAl2016open}. 
	
	There is yet a third representation of the dynamics as a parameterized collection of self-maps $\Phi^\sharp : \TimeB \to \Endo(\Omega)$, where $\Endo(\Omega)$ is the set of self-maps on $\Omega$. In this case $\Phi^\sharp$ does not encode the algebra, but the nature of the dependence of the transformation on time. The vast body of theory in dynamical systems depends on our ability to use these descriptions interchangeably. It is easy to follow the equivalence of these descriptions within a set-theoretic context. However, the interchangeability is highly non-trivial in a general context. One of our main goals is to investigate the axiomatic roots of this interchangeability. This has been missing in most categorical descriptions of dynamics so far.
	
	Our approach is distinguished by the utilization of enriched functors and enriched natural transformations. We present a set of purely category theoretic assumptions that support all the usual descriptions of dynamics, and also makes them equivalent. Due to our commitment to a structural or categorical description, no specific properties such as continuity, differentiability or measurability are assumed. Instead all properties are inherited from the categorical structure of the context, to be denoted as $\Context$. One of our main results is 
	
	\begin{corollary} \label{corr:2}
		Under the categorical assumptions \ref{A:dyn}, \ref{A:monoidal}, \ref{A:closed} and \ref{A:enrich_fnctr}, the three descriptions of a dynamical system - enriched functor, parameterized family, and left action of a semi-group, are equivalent, as displayed in Figure \ref{fig:thm:8}.
	\end{corollary}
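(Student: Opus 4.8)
The plan is to realize the triangle in \eqref{eqn:thm:8} as a composite of two separately constructed equivalences, the \emph{parametric} edge and the \emph{flow} edge, and then to obtain the bottom edge by composition. Since \eqref{eqn:thm:8} is precisely the content of Theorem~\ref{thm:8}, the corollary follows once each edge is shown to be an equivalence under the four hypotheses: \ref{A:dyn} fixes the time structure $\TimeB$ and the semigroup data common to all three categories, while \ref{A:monoidal}, \ref{A:closed} and \ref{A:enrich_fnctr} supply the monoidal product, the internal hom, and the enrichment that the individual constructions consume.

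First I would treat the parametric edge $\EnrichSGActCat{\Context} \to \ParaEndoCat{\Context}$. An enriched functor $\Time \to \Context$ selects an object $\Omega \in \Context$ (the image of the unique object of the one-object enriched category $\Time$) together with a morphism in the base from the hom-object $\TimeB$ into the endomorphism object $\Endo(\Omega)$. This assignment is exactly a parameterized family $\Phi^\sharp \colon \TimeB \to \Endo(\Omega)$, and the enriched-functoriality axioms of \ref{A:enrich_fnctr} --- compatibility with composition and with the unit --- translate term by term into the semigroup-homomorphism conditions defining $\ParaEndoCat{\Context}$. The same dictionary carries enriched natural transformations to morphisms of parameterized families, so the functor is fully faithful and, by construction, essentially surjective, hence an equivalence.

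Next I would treat the flow edge $\ParaEndoCat{\Context} \to \FlowCat{\Context}$. Here the closed structure of \ref{A:closed} supplies the tensor--hom adjunction, giving a natural bijection
\[
	\Context\bigl(\TimeB \otimes \Omega,\, \Omega\bigr) \;\cong\; \Context\bigl(\TimeB,\, \Endo(\Omega)\bigr).
\]
Uncurrying along this bijection turns a parameterized family $\Phi^\sharp$ into an action map $\TimeB \otimes \Omega \to \Omega$, and the coherence of \ref{A:monoidal} ensures that the multiplicativity of $\Phi^\sharp$ is carried exactly onto the associativity (left-action) axiom. Naturality of the adjunction makes this assignment a functor and its inverse (currying) a two-sided inverse up to natural isomorphism, giving the second equivalence. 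The bottom edge is then the composite, and commutativity of the triangle is immediate from the definitions.

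The main obstacle I anticipate is bookkeeping that is routine in principle but genuinely delicate: verifying that the \emph{enriched} composition law --- encoded through the monoidal product and the coherence isomorphisms of \ref{A:monoidal} --- matches, on the nose, both the semigroup-homomorphism property on the $\ParaEndoCat{\Context}$ side and the associativity constraint on the $\FlowCat{\Context}$ side, with all three mediated by the single tensor--hom adjunction. Keeping the associators and unitors visible throughout, rather than silently invoking strictness that the axioms do not grant, is what makes the translation rigorous; this is exactly where \ref{A:monoidal} and \ref{A:closed} do the real work.
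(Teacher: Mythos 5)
Your proposal is correct in outline, and its two ingredients --- unwinding a one-object enriched functor into a morphism $\Phi^\sharp : \TimeB \to \Endo(\Omega)$ satisfying \eqref{eqn:enrch_fnctr_time:1}, and carrying this across the tensor--hom adjunction to a flow satisfying \eqref{eqn:enrch_fnctr_time:2} --- are exactly the content of the paper's Lemma \ref{lem:enrch:1}, on which Theorem \ref{thm:8} rests. But the organization is genuinely different. You factor the triangle through $\ParaEndoCat{\Context}$, treated as a category in its own right, and compose two equivalences to obtain the bottom edge. The paper does the reverse: it proves only an object-level bijection involving $\ParaEndoCat{\Context}$ (Theorem \ref{thm:8}(iii), via Lemma \ref{lem:enrch:1}) and constructs the categorical isomorphism directly between $\FlowCat{\Context}$ and $\EnrichSGActCat{\Context}$ in Section \ref{sec:proof:thm:8}, precisely because $\ParaEndoCat{\Context}$ is never given an independent morphism structure --- Definition \ref{desc:dyn:5} states that its morphisms are \emph{defined} indirectly via $\FlowCat{\Context}$. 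This is the one subtlety your sketch glosses over: your phrase ``morphisms of parameterized families'' has no prior definition, so your parametric edge is either an isomorphism by fiat (morphisms transported from enriched natural transformations), with all the real work pushed into the flow edge, or it needs an independent definition plus a verification you have not supplied. Either way, the essential computation is the one the paper isolates as the unnamed lemma inside Section \ref{sec:proof:thm:8}: a morphism $h : \Omega \to \Omega'$ satisfies the semi-conjugacy square \eqref{eqn:def:dyn:6} if and only if its adjoint $h^\sharp$ satisfies the enriched condition \eqref{eqn:EnrichSGActCat:morphism}, proved by checking that the two legs of each diagram are adjoint to the two legs of the other; your appeal to ``naturality of the adjunction'' names the right mechanism, but that computation is where the content lives. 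Finally, note that the paper concludes isomorphism of categories rather than mere equivalence; since all the object correspondences here are on-the-nose bijections, phrasing the result only ``up to natural isomorphism'' gives away strength for no gain.
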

	
	Corollary \ref{corr:2} is a consequence of the more rigorous statement of Theorem \ref{thm:8}. All our results are built upon four basic categorical assumptions. Thus we present a synthetic approach to dynamical systems. The building blocks are not points and sets, but transitions and their inter-relations. Our axiomatic approach is outlined in Figure \ref{fig:dyn_interpret}. Figure \ref{fig:dyn_interpret} also displays four classical interpretations of dynamical systems. An important consequence of our axiomatization is
	
	\begin{corollary} \label{corr:5}
		Under the categorical assumptions \ref{A:dyn}, \ref{A:monoidal}, \ref{A:closed} and \ref{A:enrich_fnctr} on a context category $\Context$, one can associate to every object $\Omega\in \Context$ the notions of a path-space, orbit-space and a $\Context$-object representing the endomorphisms of $\Omega$.
	\end{corollary}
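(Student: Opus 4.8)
The plan is to realize all three objects as canonical consequences of the closed monoidal structure on $\Context$, invoking the equivalences of Theorem \ref{thm:8} only to guarantee that each construction is independent of which of the three descriptions one begins with. I would start with the endomorphism object, the most immediate of the three. Assumption \ref{A:closed} furnishes, for every pair of objects, an internal hom; specializing to $\Omega$ yields a $\Context$-object $\underline{\Context}(\Omega,\Omega)$ together with an internal composition morphism $\underline{\Context}(\Omega,\Omega)\otimes\underline{\Context}(\Omega,\Omega)\to\underline{\Context}(\Omega,\Omega)$ and an internal unit, both read off from evaluation and the closed structure, where $\otimes$ is the product of \ref{A:monoidal}. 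I would then check that these make $\underline{\Context}(\Omega,\Omega)$ an internal monoid, internalizing the set $\Endo(\Omega)$, and that this is exactly the target object into which a parameterized family $\Phi^\sharp$ lands in $\ParaEndoCat{\Context}$. This identifies $\underline{\Context}(\Omega,\Omega)$ as the promised $\Context$-object of endomorphisms of $\Omega$.

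For the path-space I would again invoke \ref{A:closed}, now with the time object $\TimeB$ supplied by \ref{A:dyn} as the hom-object of the enriching time category $\Time$. The internal hom $\underline{\Context}(\TimeB,\Omega)$ is a $\Context$-object whose generalized points $S\to\underline{\Context}(\TimeB,\Omega)$ correspond, under the tensor--hom adjunction, to morphisms $S\otimes\TimeB\to\Omega$, that is, to $\TimeB$-indexed families of states. This is precisely the internalization of the collection of trajectories $\TimeB\to\Omega$, so I would define it to be the path-space of $\Omega$. The enriched-functor description $\EnrichSGActCat{\Context}$ then shows that the orbit map attached to any given dynamics factors through this object, supplying its intended dynamical meaning.

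The orbit-space is where the genuine work lies, since it is the one construction that is colimit-like rather than limit-like. Transporting a given dynamics to its left-action form in $\FlowCat{\Context}$ through Theorem \ref{thm:8} yields an action morphism $a\colon\TimeB\otimes\Omega\to\Omega$, and I would define the orbit-space as the coequalizer of the parallel pair $a$ and the projection $\pi_\Omega$, both from $\TimeB\otimes\Omega$ to $\Omega$; this coequalizer quotients $\Omega$ by the relation identifying each state with its images under the action, the synthetic counterpart of collapsing each orbit to a point and the exact dual of the path-space above. Individual orbits, regarded as subobjects of $\Omega$, I would instead obtain from the image factorization of the orbit maps produced in the previous paragraph, which recovers the equalizer-type description of invariant subobjects flagged among the keywords.

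The main obstacle is twofold and concentrated entirely in the orbit-space. First, the required coequalizer (and the image factorization) must be shown to exist; since \ref{A:dyn}--\ref{A:enrich_fnctr} do not postulate general cocompleteness, I would need either to extract the existence of these particular colimits from the enrichment assumption \ref{A:enrich_fnctr}, or to state explicitly that the orbit-space is defined precisely when the relevant coequalizer exists. Second, I must verify that this coequalizer is compatible with the monoidal and enriched structure, so that the orbit-space is genuinely a $\Context$-object carrying the expected universal property, and that the whole assignment sending $\Omega$ to its endomorphism object, path-space, and orbit-space is natural with respect to the equivalences of Theorem \ref{thm:8}. By contrast, the endomorphism object and the path-space are formal consequences of \ref{A:closed} and require only routine verification.
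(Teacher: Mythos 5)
Your constructions of the endomorphism object and the path-space are exactly the paper's: Assumption \ref{A:closed} supplies the internal homs, and the paper simply sets $\Endo(\Omega) := \IntHom{\Omega}{\Omega}$ in \eqref{eqn:def:Endo} and $\PathF(\Omega) := \IntHom{\TimeB}{\Omega}$ in \eqref{eqn:def:Path:1}; your reading of $\Phi^\sharp$ as landing in the former, and of generalized points $S \otimes \TimeB \to \Omega$ as inhabiting the latter, matches the paper. No issue there.

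The orbit-space is where the proposal breaks down, and the gap is sharper than the existence-of-coequalizers caveat you raise yourself. Your parallel pair is not even well-defined under the corollary's hypotheses: a projection $\pi_\Omega : \TimeB \otimes \Omega \to \Omega$ does not exist in a general symmetric monoidal category. Producing it requires the unit $1_{\Context}$ to be terminal, so that one can form $\TimeB \otimes \Omega \to 1_{\Context} \otimes \Omega \cong \Omega$; that is precisely Assumption \ref{A:trmnl}, which the paper introduces only in Section \ref{sec:states} and which is deliberately absent from this corollary (in $\VectCat$ with its tensor product, for instance, no such projection exists). More fundamentally, your premise that the orbit-space ``must be colimit-like'' is the reverse of what the paper does. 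The paper never collapses each orbit to a point; its orbit-space is the object whose \emph{elements are the orbits}, realized as a subobject of the path space: a state $\omega : 1_{\Context} \to \Omega$ yields an orbit $\Orbit_{\omega}^\flat : 1_{\Context} \to \PathF(\Omega)$ by adjunction, equation \eqref{eqn:def:orbit}, and the space of all such orbits is the subshift domain $E_{\Phi}$, the equalizer of $\shift^\flat$ and $\Transfer{\Phi}^\flat$ constructed in \eqref{eqn:thm:3:1} and Theorem \ref{thm:3} --- a limit, not a colimit. Existence is handled there by the explicit added hypothesis that $\Context$ has equalizers, and Theorem \ref{thm:6} then shows $E_{\Phi} \cong \Omega$ for commutative $\TimeB$, so the orbit-space is an isomorphic copy of $\Omega$ embedded in $\PathF(\Omega)$ via $\Phi^\flat$. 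The ``equalizer'' keyword that you tried to attach to image factorizations of orbit maps in fact refers to this subshift construction. To repair the argument, replace your coequalizer with this equalizer; the quotient object you describe cannot be built from Assumptions \ref{A:dyn}, \ref{A:monoidal}, \ref{A:closed} and \ref{A:enrich_fnctr} alone.
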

	
	While these notions are familiar and intuitive in a set-theoretic approach, they are rediscovered solely from categorical assumptions in this article. Their construction are defined in Section \ref{sec:enrich_fnctr}. There is another set of constructions that we derive from our basic theorems, which are essential to dynamical systems theory. They are
	
	\begin{corollary} \label{corr:4}
		Suppose that the contexts of set theory, topology, measurable spaces and smooth spaces are represented by the closed monoidal categories $\SetCat$, $\CGWHS$, $\QuasiBorel$ and $\Diffeological$ respectively. Given a dynamical system $\paran{\Phi^t, \Omega}$ in any of the contexts, one can associate to it two new dynamics within the same context :
		\begin{enumerate} [(i)]
			\item the pointwise action $\Transfer{\Phi}^t$ of the dynamics on curves
			\item the sub-flow $\shift^t$ induced on the path space of the domain.
		\end{enumerate}
		Both of these associations are functorial.
	\end{corollary}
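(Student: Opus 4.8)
The plan is to deduce Corollary \ref{corr:4} from the abstract machinery by first checking that each of the four named categories is an instance of our axiomatic framework, and then reading off the two constructions from the closed-monoidal and path-space data guaranteed by Corollaries \ref{corr:2} and \ref{corr:5}. First I would verify that $\SetCat$, $\CGWHS$, $\QuasiBorel$ and $\Diffeological$ each satisfy assumptions \ref{A:dyn}, \ref{A:monoidal}, \ref{A:closed} and \ref{A:enrich_fnctr}. All four are cartesian closed (for $\SetCat$ this is immediate; for $\CGWHS$ it is the standard compactly-generated exponential; for $\QuasiBorel$ and $\Diffeological$ it follows from the known cartesian closed structure of quasi-Borel and of diffeological spaces), so \ref{A:monoidal} and \ref{A:closed} hold with the monoidal product taken as the categorical product and the internal hom as the exponential. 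It then remains to confirm \ref{A:dyn} and the enrichment condition \ref{A:enrich_fnctr}, which amounts to exhibiting the time object $\TimeB$ as a monoid object in each category and checking that the hom-objects assemble into an enrichment of the category over itself.

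Having placed each context inside the framework, Corollary \ref{corr:5} supplies, for the domain $\Omega$, a path-space object $P\Omega$ and an internal endomorphism object $\Endo(\Omega)$, both living in the same category. For part (i), I would define the transfer action $\Transfer{\Phi}^t$ as post-composition by $\Phi^t$ on the path-space: applying the internal-hom functor $[\,-\,,\,\Omega\,]$ to $\Phi^t$ yields $\Transfer{\Phi}^t = [\,\mathrm{id},\,\Phi^t\,]$, a $\Context$-endomorphism of $P\Omega$. The semigroup law $\Transfer{\Phi}^{t+s} = \Transfer{\Phi}^t \circ \Transfer{\Phi}^s$ is then immediate from functoriality of the internal hom together with the semigroup law for $\Phi$, so that $\Transfer{\Phi}$ is again an enriched functor $\Time \to \Context$, i.e. a dynamical system in the same context. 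For part (ii), the shift $\shift^t$ is the reparametrization endomorphism of $P\Omega$ induced by time-translation $\tau \mapsto \tau + t$; because the parametrizing object carries the monoid structure of $\TimeB$, the assignment $t \mapsto \shift^t$ is a left action by Corollary \ref{corr:2}, giving the induced sub-flow on the path-space.

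Finally, to establish functoriality of the two associations $\Phi \mapsto \Transfer{\Phi}$ and $\Phi \mapsto \shift$ on the category of dynamical systems, I would check their effect on morphisms: a semi-conjugacy $h : \Omega \to \Omega'$ induces $[\,\mathrm{id},\,h\,] : P\Omega \to P\Omega'$, and naturality of the internal hom shows this intertwines the transfer actions and commutes with the shifts, while identities and composites are preserved by bifunctoriality of $[\,-\,,\,-\,]$. The main obstacle I anticipate is not these formal verifications but the first step: confirming the enrichment assumption \ref{A:enrich_fnctr} for $\QuasiBorel$ and $\Diffeological$, where the self-enrichment via the internal hom and the compatibility of the exponential with the monoid $\TimeB$ are the least standard facts, and must be checked directly against the specific constructions of the quasi-Borel and diffeological exponentials rather than inferred from a generic closed-monoidal argument.
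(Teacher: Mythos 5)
Your outline reproduces the paper's architecture at the level of constructions: the transfer action is post-composition by the flow on the internal-hom path object, the shift is time-translation, and functoriality on morphisms comes from naturality of the internal hom (these are the paper's Theorem \ref{thm:2}, Theorem \ref{thm:1}, Theorem \ref{thm:ijd9} and Lemma \ref{lem:iuhd9}). But there is a genuine gap at the central step. In this framework a ``dynamics within the same context'' is not a family $\braces{\Transfer{\Phi}^t}_t$ of endomorphisms indexed by points of $\TimeB$ satisfying a pointwise semigroup law; it is a single $\Context$-morphism $\TimeB \otimes \PathF(\Omega) \to \PathF(\Omega)$ (equivalently $\TimeB \to \Endo(\PathF(\Omega))$) making the unit and associativity diagrams \eqref{eqn:enrch_fnctr_time:1}--\eqref{eqn:enrch_fnctr_time:2} commute. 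Your proposal defines $\Transfer{\Phi}^t = [\mathrm{id},\Phi^t]$ and $\shift^t$ one $t$ at a time and verifies the semigroup law pointwise; this never produces the joint morphism, i.e.\ never shows that the dependence on $t$ is itself a morphism of the context (continuous, measurable, smooth). That internalization is exactly what the paper's definitions \eqref{eqn:def:shift} and \eqref{eqn:def:transfer} accomplish -- both maps are obtained as adjoints, via the closed structure, of explicit composite morphisms -- and the content of Theorems \ref{thm:1} and \ref{thm:2} is the diagram-chase verification (using Lemmas \ref{lem:idl3v:1} and \ref{lem:enrch:1}) that these morphisms satisfy the action laws. Declaring this ``immediate from functoriality of the internal hom'' conflates ordinary functoriality, which acts on hom-\emph{sets}, with the enriched ($\Context$-internal) functoriality of $\IntHom{\TimeB}{-}$ and $\IntHom{-}{\Omega}$, which is precisely the point at issue.

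The appeal to Corollary \ref{corr:2} cannot close this gap: that corollary (i.e.\ Theorem \ref{thm:8}) converts between three descriptions \emph{each of which is already a $\Context$-morphism} satisfying the relevant commuting diagrams; it does not upgrade a pointwise-indexed family into a left action. Indeed the paper emphasizes that this upgrade is trivial in $\SetCat$ and is the non-trivial step in a general context -- it is the reason the enrichment machinery exists. Two smaller inaccuracies point the same way: Assumption \ref{A:enrich_fnctr} is a hypothesis on the given system $\tilde{\Phi}$ (that its flow is a $\Context$-morphism), not a property of the four categories to be verified; and the shift construction needs no hypothesis on the dynamics at all (Theorem \ref{thm:1} uses only Assumptions \ref{A:dyn}, \ref{A:monoidal}, \ref{A:closed}) and is functorial on all of $\Context$, not merely on semi-conjugacies, which is what makes the paper's subshift functor in Theorem \ref{thm:3} possible later.
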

	
	The two new induced dynamics -- $\Transfer{\Phi}^t$ and $\shift^t$ are in the same context as $\Phi$ in the sense that the spaces and transformations lie in the same context category $\Omega$. The categories $\CGWHS$, $\QuasiBorel$ and $\Diffeological$ are discussed in Section \ref{sec:conclus}. They provide the proper collection of transformations that satisfy the axioms mentioned above. The dynamics of $\Transfer{\Phi}^t$ and $\shift^t$ are well familiar in classical dynamical systems theory as the transfer operator and shift operators on the space of paths. These are usually defined by their pointwise actions. Corollary \ref{corr:4} presents them as a purely categorical consequence. The details are presented in Theorems \ref{thm:1} and \ref{thm:2} respectively. These dynamics could be defined in any setting which lack an apriori notion of points or states. 
	
	A major gain from the perspective of enriched categories is a refined notion of orbits. An orbit of a point is the succession of states that it transitions to. The phase space is a disjoint collection of orbits, and the complexities of the dynamics lie in their mutual arrangement. Orbits have even been suggested as a primitive object in the description of dynamics \cite[e.g.]{Suda2023dynamical, Suda2023partial}. Another main contribution of the perspective of enriched category is the notion of the space of paths $\PathF(\Omega)$. This notion in turn allows the notion of a \textit{subshift} :
	
	\begin{corollary} \label{corr:subshift}
		One can associate to a dynamical system $\paran{\Phi^t, \Omega}$ in any of the contexts in Corollary \ref{corr:4}, a subshift dynamics $\paran{\shift_{\Phi}^t, E_{\Phi}}$ within the same context. The subshift is defined via the following universal construction --
		\begin{enumerate} [(i)]
			\item the domain is the maximal collection of states which generate the same orbits under $\Transfer{\Phi}^t$ and $\shift^t$.
			\item the dynamics under $\shift_{\Phi}^t$ commutes with both $\Transfer{\Phi}^t$ and $\shift^t$.
		\end{enumerate}
	\end{corollary}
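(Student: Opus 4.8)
The plan is to realize $E_{\Phi}$ as an equalizer in $\Context$ and then to show that this equalizer carries a canonical dynamics. First I would invoke Theorems \ref{thm:1} and \ref{thm:2}, summarized in Corollary \ref{corr:4}, to obtain within the same context the two endomorphism dynamics $\Transfer{\Phi}^t$ and $\shift^t$ on the path object $\PathF(\Omega)$; by the equivalence of Corollary \ref{corr:2} each of these may be handled interchangeably as an enriched functor $\Time\to\Context$, as a left action, or as a parameterized family, and I would use whichever presentation is most convenient. Using the orbit-space construction guaranteed by Corollary \ref{corr:5}, the two dynamics induce two orbit-assigning morphisms out of $\PathF(\Omega)$, and the condition in (i) --- that a state generate the same orbit under $\Transfer{\Phi}^t$ and under $\shift^t$ --- is exactly the condition that these two morphisms agree on that state.

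Concretely, I would then form the equalizer $e\colon E_{\Phi}\hookrightarrow\PathF(\Omega)$ of this parallel pair. The four contexts $\SetCat$, $\CGWHS$, $\QuasiBorel$ and $\Diffeological$ all possess equalizers, so $E_{\Phi}$ is again a $\Context$-object and we remain within the same context, as required. The universal property of the equalizer supplies precisely the maximality claimed in (i): any sub-object of $\PathF(\Omega)$ on which the two dynamics generate a common orbit factors uniquely through $e$, so $E_{\Phi}$ is the largest such sub-object.

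The crux is to upgrade this bare sub-object to a sub-dynamical-system, that is, to produce $\shift_{\Phi}^t$. For this I would establish that $E_{\Phi}$ is forward-invariant. Because $\Transfer{\Phi}$ and $\shift$ are semigroup actions, the defining agreement is propagated along trajectories, so each $\Transfer{\Phi}^t$ carries $E_{\Phi}$ into $E_{\Phi}$; categorically, the composite $\Transfer{\Phi}^t\circ e$ equalizes the same pair as $e$ and therefore factors uniquely through $e$, and this factorization is the desired map $\shift_{\Phi}^t\colon E_{\Phi}\to E_{\Phi}$. Uniqueness of the factorizations forces $\shift_{\Phi}^t$ to respect composition and units, hence to be an enriched functor, i.e. a genuine dynamics in the sense of Corollary \ref{corr:2}. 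I expect this to be the main obstacle, since it is exactly the point at which mere agreement of two morphisms must be promoted to an invariant structure, and it hinges on the semigroup-homomorphism property interacting correctly with the limit defining $E_{\Phi}$.

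Finally, property (ii) falls out of the construction: by the defining factorization one has $e\circ\shift_{\Phi}^t=\Transfer{\Phi}^t\circ e=\shift^t\circ e$, so the inclusion $e$ intertwines $\shift_{\Phi}^t$ with each ambient dynamics, which is the commutation asserted. Functoriality of $\paran{\shift_{\Phi}^t,E_{\Phi}}$ in $\Phi$ would then follow by combining the functoriality of $\Transfer{\Phi}$ and $\shift$ from Corollary \ref{corr:4} with the functoriality of the equalizer construction: a morphism of the input dynamics induces compatible morphisms of the orbit-assigning pairs, and hence a unique morphism of the associated equalizers by the universal property.
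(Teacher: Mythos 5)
Your proposal follows essentially the same route as the paper: the paper (Theorem \ref{thm:eqlz_dyn} together with Theorem \ref{thm:3}) likewise forms the equalizer of the two orbit-assigning flat adjoints $\shift^\flat$ and $\Transfer{\Phi}^\flat$, uses the semigroup property of the two flows to show that the dynamics restricted to the equalizer again equalizes this pair and hence factors uniquely through it, and then reads off the intertwining property and the functoriality from the universal property of the equalizer. The only organizational difference is that the paper first proves this for an arbitrary pair of pre-flows $F,G$ and then specializes to $F=\shift_{\Omega}$, $G=\Transfer{\Phi}$, whereas you work with this specific pair throughout.
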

	
	Corollary \ref{corr:subshift} is stated in more technical terms in Theorem \ref{thm:3}. Each point in a subshift is the forward orbit of some point in $\Omega$. We further go on to prove that a subshift is precisely an equivalent description of the original dynamics, in terms of its orbits :
	
	\begin{corollary} \label{corr:dom_subshift}
		The domain of the subshift is an isomorphic copy of the domain $\Omega$,embedded in the path space $\PathF (\Omega)$ if the time semigroup is commutative.
	\end{corollary}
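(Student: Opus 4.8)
The plan is to exhibit the isomorphism concretely as the \emph{orbit map} and to identify its image with the subshift domain $E_{\Phi}$. Write $\mathcal{O}\colon \Omega \to \PathF(\Omega)$ for the orbit map, namely the exponential transpose (under the closed structure of Assumption \ref{A:closed}) of $\Phi \circ \swap \colon \Omega \otimes \Time \to \Omega$; informally $\mathcal{O}$ sends a state to the path $t \mapsto \Phi^{t}$ applied to it. Recall from Theorem \ref{thm:3} that $E_{\Phi} \hookrightarrow \PathF(\Omega)$ is the equalizer of the two $\Time$-actions $\Transfer{\Phi}$ and $\shift$ on $\PathF(\Omega)$, with inclusion $\iota$, and that $\shift_{\Phi}$ is their common restriction. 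The candidate inverse to $\mathcal{O}$ will be $r := \eval_{e} \circ \iota \colon E_{\Phi} \to \Omega$, where $\eval_{e}\colon \PathF(\Omega) \to \Omega$ is evaluation at the identity $e$ of the time monoid object $\Time$ (Assumption \ref{A:dyn}), built from the unit $I \xrightarrow{e} \Time$ and the counit of the hom-tensor adjunction. The goal then reduces to two statements: that $\mathcal{O}$ factors through $E_{\Phi}$, and that the induced $\bar{\mathcal{O}}\colon \Omega \to E_{\Phi}$ and $r$ are mutually inverse.

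First I would show that $\mathcal{O}$ equalizes $\Transfer{\Phi}$ and $\shift$. The key observation is that $\mathcal{O}$ intertwines the original dynamics with each of the two path-dynamics: both $\shift \circ (\mathrm{id}_{\Time} \otimes \mathcal{O})$ and $\Transfer{\Phi} \circ (\mathrm{id}_{\Time} \otimes \mathcal{O})$ are forced to equal $\mathcal{O} \circ \Phi$. For $\shift$ this is automatic and amounts to the associativity axiom of the action $\Phi$ -- it is precisely the statement, established in the construction of the shift (Theorem \ref{thm:2}), that the orbit map is a conjugacy onto its image. For $\Transfer{\Phi}$, however, the same computation produces $\Phi$ evaluated at the \emph{opposite} product, so the two agree exactly when $\mu = \mu \circ \swap_{\Time,\Time}$, i.e. when $\Time$ is a commutative monoid object. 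This is the one place the commutativity hypothesis is used. Granting it, $\Transfer{\Phi} \circ (\mathrm{id}_{\Time} \otimes \mathcal{O}) = \shift \circ (\mathrm{id}_{\Time} \otimes \mathcal{O})$, so the universal property of the equalizer yields a unique $\bar{\mathcal{O}}\colon \Omega \to E_{\Phi}$ with $\iota \circ \bar{\mathcal{O}} = \mathcal{O}$.

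Next I would check the two composites. That $r \circ \bar{\mathcal{O}} = \mathrm{id}_{\Omega}$ follows from $\eval_{e} \circ \mathcal{O} = \mathrm{id}_{\Omega}$, which is exactly the unit axiom $\Phi \circ (e \otimes \mathrm{id}_{\Omega}) = \ell_{\Omega}$ of the action combined with the defining transpose relation for $\mathcal{O}$; in particular $\mathcal{O}$ is a split monomorphism, giving the claimed embedding into $\PathF(\Omega)$. The reverse identity $\bar{\mathcal{O}} \circ r = \mathrm{id}_{E_{\Phi}}$ is the substantive half. Since $\iota$ is monic it suffices to prove $\mathcal{O} \circ r = \iota$, i.e. that every path in $E_{\Phi}$ is the orbit of its value at $e$. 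Pointwise this is the one-line computation that on $E_{\Phi}$ one has $\Phi^{s}(p(t)) = p(ts)$, whence setting $t = e$ gives $p(s) = \Phi^{s}(p(e))$; categorically I would obtain it by post-composing the equalizer relation $\Transfer{\Phi} \circ (\mathrm{id}_{\Time} \otimes \iota) = \shift \circ (\mathrm{id}_{\Time} \otimes \iota)$ with $\eval_{e}$, collapsing the shift term through the left-unit axiom of $\Time$, and re-transposing to recover $\mathcal{O} \circ r = \iota$. Combining the two identities, $\bar{\mathcal{O}}$ is an isomorphism $\Omega \cong E_{\Phi}$; and since it is moreover $\Time$-equivariant by the intertwining above, it is a conjugacy, so $(\shift_{\Phi}, E_{\Phi})$ is genuinely an isomorphic copy of $(\Phi, \Omega)$.

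The main obstacle is carrying the evident pointwise steps -- ``evaluate at $t = e$'', ``use associativity'', ``use $st = ts$'' -- through the point-free enriched formalism, where each must be phrased as an equation of morphisms assembled from the hom-tensor transposition, the $\swap$ isomorphism, and the monoid-object axioms of $\Time$. The delicate bookkeeping is which variable is being curried: the orbit map transposes the \emph{state} slot, whereas $\Transfer{\Phi}$ and $\shift$ act in the \emph{time} slot, so the two equivariance computations must be tracked carefully to see that commutativity, and nothing else, reconciles them. One must also ensure the equalizer $E_{\Phi}$ exists in $\Context$ and that $\eval_{e}$ is a genuine $\Context$-morphism, both guaranteed by Assumptions \ref{A:monoidal}, \ref{A:closed} and the construction in Theorem \ref{thm:3}.
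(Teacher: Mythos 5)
Your proposal is correct and takes essentially the same route as the paper: your orbit map $\mathcal{O}$ is exactly the paper's flat adjoint $\Phi^\flat$, and the paper's proof of Theorem \ref{thm:6} likewise shows that $\Phi^\flat$ equalizes $\shift^\flat$ and $\Transfer{\Phi}^\flat$ (with commutativity of $\TimeB$ entering at precisely the point you identify) and then uses evaluation at the identity $\start_{\star}$ to recover states, concluding $E_{\Phi} \cong \Omega$ via Theorem \ref{thm:3}. The only difference is packaging: the paper verifies the full universal property of $\Phi^\flat$ as an equalizer and invokes uniqueness of equalizers, whereas you exhibit the mutually inverse morphisms $\bar{\mathcal{O}}$ and $r$ directly -- the underlying computations (commutativity for the transfer side, the unit law for the inverse) are identical.
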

	
	Corollary \ref{corr:dom_subshift} will be derived as an immediate consequence of Theorems \ref{thm:3} and \ref{thm:6}. These constructions are introduced in Section \ref{sec:path}, and their properties are stated rigorously later in Theorem \ref{thm:1} and Theorem \ref{thm:2}. As mentioned before, because our approach is synthetic / constructivist instead of analytical / descriptive, these results are proven without relying on details specific to topology, measure theory and stochastic processes.

	\paragraph{Outline} We begin with a brief review of category theory in Section \ref{sec:cat}. We introduce the first axiom that enables a categorical description of dynamical systems. Time is assumed to be a semigroup from the outset. Our next axioms in Section \ref{sec:time} imparts a finer structure to the time semigroup. They also enable self-maps and paths to be interpretable as new objects within the same category. Our fourth and final axiom is presented in Section \ref{sec:enrich_fnctr}. It provides the connection between all the three descriptions of dynamical systems. These four axioms enable a deeper understanding of constructs such as path spaces and transfer operator. We analyze these in Section \ref{sec:path}. Next in Section \ref{sec:states} we incorporate the study of states and stationary states, which generalize the notion of both invariant sets and measures. We end with some applications and discussion in Section \ref{sec:conclus}.
	
	\section{Dynamics as a functor} \label{sec:cat}
	
	\begin{table} [t!]
		\caption{Various kinds of dynamical systems. Section \ref{sec:cat} presents how the collection of dynamical with a given time-structure $\calT$ and mathematical context $\calC$ has the structure of a functor-category. The table presents a few instances of $\Time$ and $\Context$. Each leads to a different sub-branch in the theory of dynamical systems. A categorical approach unifies many observations in these separate branches.  }
		\begin{tabularx}{\linewidth}{|l|L||l|L|}
			$\calG$ & Interpretation & $\calC$ & Interpretation \\ \hline \hline
			$\real$ & Invertible flow, continuous time & $\Topo$ & Topological dynamics \\ \hline
			$\Rplus$ & Continuous time & $\MeasCat$ & Measurable dynamics \\ \hline
			$\num$ & Discrete time & $\BanachCat$ & Operator family \\ \hline
			$\integer$ & Invertible, Discrete time & $\StochCat$ & Markov process \\ \hline
			$\mathbb{T}^d$ & Torus actions & $\SetCat$ & Set-theoretic dynamics \\ \hline
		\end{tabularx}
		\label{tab:TC}
	\end{table}
	
	As mentioned before, every notion of a dynamical system has two ingredients - a collection of \emph{states}, called the \emph{domain} or \emph{phase-space}, and a collection of transformations of the domain. The simplest example is when the domain $\Omega$ is a set, i.e., an unstructured collection of states, and the dynamics is a collection of maps from $\Omega$ to itself. This is a set-theoretic dynamical system, and is listed along with several other kinds of dynamics in Table \ref{tab:TC}. Dynamical systems theory usually assumes some structure on $\Omega$, such as measurability, probability, linearity or topology, and expects that the transformations respect these properties. For the examples listed, this means that the transformations are measurable, measure-preserving, linear and continuous respectively. One also assumes a certain algebraic structure on the transformations, which allows the transformations to be applied repetitively. Each of the collection of structure-preserving self-transformations are called \emph{endomorphisms}. Endomorphisms have a semigroup structure to them. The term ``endomorphism(s)" always refers to such a semigroup created by some structural aspect of the domain $\Omega$. A dynamical system is tied to the semigroup property of endomorphisms.
	
	\paragraph{First description} A dynamical system is the action $\tilde{\Phi}$ of a semigroup $\Time$ on the domain $\Omega$. Thus $\tilde{\Phi}$ is homomorphism  from $\Time$ to the semigroup of endomorphisms of $\Endo(\Omega)$.
	The time-semigroup $\Time$ could be the additive semigroups $\SqBrack{\real_0, +}$, $\SqBrack{\real, +}$, $\SqBrack{\num_0, +}$ and $\SqBrack{\num, +}$, and even rotation groups such as the torus group $\mathbb{T}^d$ \cite[e.g.]{raissy2010torus, Das2023Koop_susp}. One of the main considerations of  dynamical systems theory is that these systems co-exist, often in a nested manner. Most of the complicated behavior in dynamical systems theory is an outcome of the interplay between various co-existing sub-systems \cite[e.g.]{DSSY_Mes_QuasiP_2016, DasJim17_chaos, AlexanderEtAl1992, newhouse2004new, LiaoEtAl2018}. The description of individual dynamical systems along with their inter-connections, become more concise when one uses the language of \emph{category theory}.
	
	\paragraph{Category} A category $\calC$ is a collection of two kinds of entities : 
	\begin{enumerate} [(i)]
		\item objects : usually representing different instances of the same mathematical construct;
		\item morphism : connecting arrows between a pair of objects equipped with compositionality : given any three objects $a,b,c$ of $\calC$ and two morphisms $a \xrightarrow{f} b$ and $b \xrightarrow{g} c$, the morphisms can be joined end-to-end to create a composite morphism represented as $a \xrightarrow{g \circ f} b$. This composition is also associative;
		\item identity morphism : each object $a$ is endowed with a morphism $\Id_a$ called the \emph{identity} morphism, which play the role of unit element in composition.
	\end{enumerate}
	
	Given two objects $x,y$ in $\calC$, the collection of arrows from $x$ to $y$ is denoted as $\Hom_{\calC}(x;y)$. If the category $\calC$ is obvious, then it will be dropped from the notation and $\Hom(x;y)$ will be used for simplicity. Note that this collection may be infinite, finite or even empty. The last criterion implies that for each $x$ $\Hom(x;x)$ has at least one member. Whenever there are multiple categories being discussed, one uses the notations $\Hom_{\calC}(x;y)$ or $\calC(x;y)$ to indicate that the morphisms are within the category $\calC$.
	
	\paragraph{Examples} One of the most fundamental categories is $\SetCat$, the category in which objects are sets up to a certain prefixed cardinality, and arrows are arbitrary maps. Similarly $\Topo$ denotes the category of topological spaces, with continuous maps as arrows. We denote by $\VectCat$ the category in which the objects are vector spaces and arrows are linear maps. The collection $\AffineCat$ that we have already defined has the same objects as $\VectCat$ but all affine maps as morphisms. Note that this includes the morphisms in $\VectCat$. This makes $\VectCat$ a \emph{subcategory} of $\AffineCat$. Suppose $\calU$ is any set. Then the power-set $2^{\calU}$ of subsets of $\calU$ is a category, in which the relations are the subset $\subseteq$ relations. Note that there can be only at most one arrow between any two objects $A,B$ of this category, which is to be interpreted as inclusion. Such categories are known as \emph{preorders}, and other examples are the category of ordered natural numbers, real numbers, open covers, and the concept of infinitesimal \cite[see]{Das2023CatEntropy}. 
	
	A particularly important example of categorical structure can be found in semigroups. Any semigroup $\calG$ can be interpreted as a 1-point category, in which each arrow corresponds bijectively to elements of $\calG$. The associativity of semigroup operation becomes the associativity of arrow composition. Figure \ref{fig:semigroups} show  the 1-point categorical representations of the semigroups $\SqBrack{\num_0, +}$, $\SqBrack{\integer, +}$ and $\SqBrack{\real, +}$. We formalize this into an assumption :
	
	\begin{Assumption} \label{A:dyn}
		There is a semigroup $\Time$, interpreted as a 1-object category.
	\end{Assumption}
	
	\begin{figure} [!t]
		\center
		\includegraphics[width=0.26\textwidth]{\figs 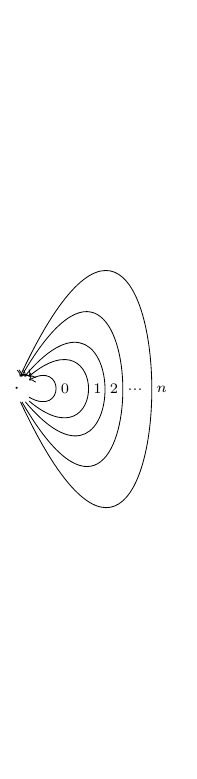}
		\includegraphics[width=0.47\textwidth]{\figs 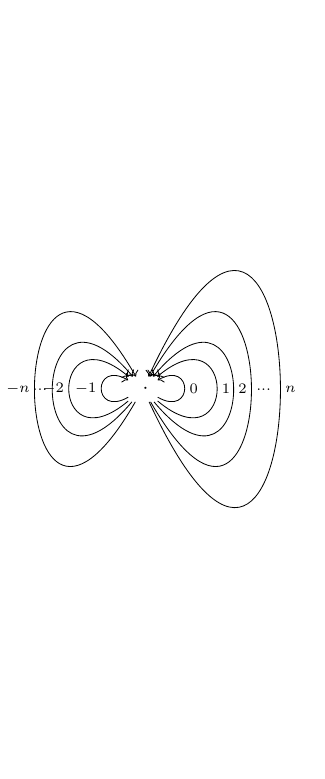}
		\includegraphics[width=0.24\textwidth]{\figs 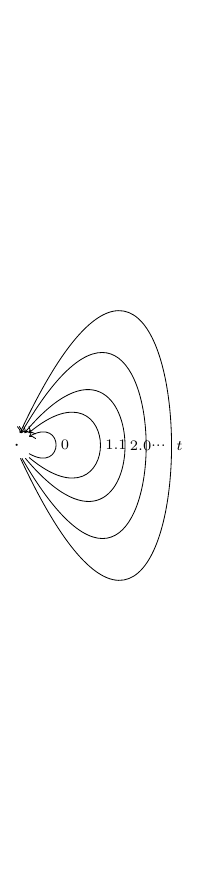}
		\caption{Representing semigroups as 1-object categories -- $\SqBrack{ \num_0, +}$, $\SqBrack{ \integer, +}$ and $\SqBrack{ \real, +}$.} 
		\label{fig:semigroups}
	\end{figure}
	
	Any dynamical system has a concept of time associated to it. The role of time can be played by any semigroup, such as $(\num_0, +)$, $(\num, +)$, $(\real_0, +)$ and $(\real, +)$. Such a time semigroup shall be denoted by $\Time$. We shall interpret it as a 1-point category. Next let us fix a category $\Context$ and call it the \emph{context} category. It is usually taken to be $\Topo$, $\VectCat$ or $\MeasCat$. The category $\Context$ represents the nature of spaces and maps that are involved our dynamical systems of interest. The context and time are the two ingredients of a dynamical systems. 
	
	\paragraph{Functors} Given two categories $\calC, \calD$, a functor $F:\calC\to \calD$ is a mapping between their objects along with the following properties : 
	\begin{enumerate} [(i)]
		\item For each $x,y\in ob(\calC)$, $F$ induces a map $F_{x,y} : \Hom_{\calC}(x;y) \to \Hom_{\calD}( Fx; Fy)$. Thus arrows / morphisms between any pair of points get mapped into arrows between the corresponding pair of points in the image.
		\item $F$ preserves compositionality : given any three objects $a,b,c$ of $\calC$ and two morphisms $a \xrightarrow{f} b$ and $b \xrightarrow{g} c$, $F(g\circ f) = F(g) \circ F(f)$.
		\item $F$ preserves identity : $F(\Id_a) = \Id_{F(a)}$.
	\end{enumerate}
	
	Functors are thus maps that also preserve categorical structure, or in other words, preserves compositionality. One of the simplest notions of functors are monotonic functions from $\real$ to $\real$, and may be interpreted as functors from $\Rplus$ to $\Rplus$ or $\Rminus$. Realizing mathematical transformations as functors leads to deeper insights in the associated field. Some examples are Lebesgue integration \cite{Leinster_integration_2020}, diameter and Lebesgue number \cite{Das2023CatEntropy}, and topological closure \cite{ClementinoTholen1997sep, ClementinoGiuliTholen1996}. This prompts our first interpretation : 
	
	\begin{definition} [Interpretation 1 - Semigroup action] \label{desc:dyn:1}
		Under Assumption \ref{A:dyn}, a dynamical system within a context category $\Context$ is a functor $\tilde{\Phi} : \Time \to \Context$.
	\end{definition}
	
	Any functor from a 1-object category such as $\Time$ has a single object in its image. The choice of this object corresponds to the choice of the domain $\Omega$ of the dynamics. Next involved is a family of endomorphisms $\Phi^t : \Omega \to \Omega$ indexed by $\braces{ t\in \Time }$. The compositionality of a functor is precisely an action of the semigroup $\calT$ on endomorphisms of $\Omega$. See Figure \ref{fig:dyn_functor} for an illustration. As mentioned before, the relations between dynamical systems are equally important as the dynamics within individual systems. These relations can be expressed concisely in categorical language as a transformation of functors.
	
	\paragraph{Natural transformations} Let $\calC, \calD$ be two categories, and $F, F' : \calC \to \calD$ be two functors. A natural transformation $\alpha$ from $F$ to $F'$, denoted as $\alpha : F \Rightarrow F'$ is a family of arrows $\SetDef{ \alpha_c : F(c) \to F'(c) }{ c\in ob(\calC) }$ such that the following commutations hold
	\begin{equation} \label{eqn:def:nat_transform}
		\forall \begin{tikzcd} c \arrow[d, "f"] \\ c' \end{tikzcd} , \quad 
		\begin{tikzcd}
			F(c) \arrow[r, "\alpha_{c}"] \arrow[d, "F(f)"'] & F'(c) \arrow[d, "F'(f)"] \\
			F(c') \arrow[r, "\alpha_{c'}"'] & F'(c')                        
		\end{tikzcd}
	\end{equation}
	Just like arrows / morphisms connect objects, natural transformations connect functors. In fact the collection $\Functor{\calC}{\calD}$ of functors from $\calC$ to $\calD$ is itself a category, in which the morphisms are natural transformations. This realization that the collection of functors itself is a category bears a lot of significance in the study of homology, simplicial complexes and sheaf theory in general. We will find a special significance in the study of dynamical systems.
	
	The collection of dynamical systems evolving according to time interpretation $\calT$ and within the context $\Context$, forms the functor category 
	\[\DS := \Functor{\Time}{\Context}.\]
	Each object in this category is a functor $\tilde{\Phi} : \Time \to \Context$. This involves selecting an object $\Omega$ from $\Context$. A morphism in $\Functor{\Time}{\Context}$ is a natural transformation $h$. Due to the special structure of $\Time$, the commutation in \eqref{eqn:def:nat_transform} becomes
	\[\begin{tikzcd}
		\Omega \arrow[rr, "\Phi^t"] \arrow[d, "h"'] && \Omega \arrow[d, "h"] \\
		\Omega' \arrow[rr, "\Phi'^t"'] && \Omega'
	\end{tikzcd}, \quad \forall t\in \Time. \]
	The diagram shown here is called a \emph{semi-conjugacy} in classical dynamical systems theory. It is a generalization of the notion of change of variables. If $h$ is injective, it leads to an embedding of the dynamics of $\Phi^t$ within $\Phi'^t$. If $h$ is surjective, it means that the dynamics of $\Phi'^t$ is \emph{factored} within $\Phi'^t$.
	
	\begin{figure}[!t]
		\centering
		\includegraphics[width=0.26\textwidth]{\figs N_semigroup.pdf}
		\begin{tikzcd} {} \arrow[rrr, mapsto, "\Phi"] &&& {} \end{tikzcd}
		\includegraphics[width=0.26\textwidth]{\figs 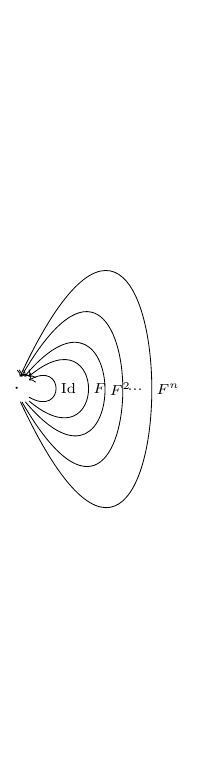}
		\caption{Dynamics as a functor. Section \ref{sec:cat} presents a dynamical system as a functor between a semigroup category $\Time$ representing time, and a category $\calC$ representing the context. Shown here is such a functor $\tilde{\Phi}$ transforming the additive semigroup $\SqBrack{\num_0, +}$ of non-negative integers into self-maps on a space $\Omega$. Since $\SqBrack{\num_0, +}$ is generated by the unit element $1$, the image $F$ of this element generates all iterations of the dynamics. } 
		\label{fig:dyn_functor}
	\end{figure}
	
	This completes the very basic step of our categorical language - interpreting dynamics as functors and semi-conjugacies as natural transformations. We next present the need of the finer structure of enriched categories and functors.
	
	\section{Time as an enriched category} \label{sec:time}	
	
	In the first description of dynamics, time is merely assumed to be a semigroup. There is no additional assumption imposed on the collection of time-instants, which could make them interpretable in context $\Context$. In most studies of dynamical systems, the dependence of the transformation of time is assumed to be more than just the result of the homomorphism $\tilde{\Phi}$. For example in ODE-theory, the flow $\Phi$ is differentiable w.r.t. time, and in the study of operator semigroups, and in measurable dynamics, the dependence on time is measurable too. These properties cannot be guaranteed by Assumption \ref{A:dyn} alone. To enable the time-semigroup itself to be present as an object of the context category $\Context$, we need to rely on advanced categorical notions such as monoidal categories and enriched categories. 
	
	A \textit{monoidal category} is a category equipped with a product like structure $\otimes$, called \emph{tensor product}. It allows objects multiple objects $a, b, c, \ldots$ to be combined into a new object $a\otimes b \otimes c \otimes \ldots$. A tensor product is a bifunctor which generalizes the usual notion of categorical product. This operation is associative up to isomorphism, and has a identity element called the \emph{unit object}, and is denoted by $1_{\Context}$. A monoidal category is said to be \emph{symmetric} if the tensor product is symmetric up to a natural isomorphism. i.e., there exists a natural isomorphisms $\swap_{a,b} : a\otimes b \to b\otimes a$. Our next assumption will be :
	
	\begin{Assumption} \label{A:monoidal}
		The context category $\Context$ is a symmetric monoidal category, and $\Time$ is enriched over $\Context$.
	\end{Assumption}
	Many classical settings of dynamical systems theory, such as the category of topological spaces and continuous maps, are closed monoidal categories. Further, the theory of Markov processes can also be developed within a symmetric monoidal category.
	\begin{definition} [Category of Markov processes]
		The category $\StochCat$ contains as objects measurable spaces $(\Omega, \Sigma)$, and Markov transitions as morphisms. Thus a morphism from $(\Omega, \Sigma)$ to $(\Omega', \Sigma')$ is a map $k : \Omega \times \Sigma' \to [0,1]$ such that
		\begin{enumerate} [(i)]
			\item for each $\omega\in \Omega$, $k(\omega, \cdot) : \Sigma' \to [0,1]$ is a probability measure;
			\item for each $S\in \Sigma'$, $k(\cdot, S) : \Omega \to [0,1]$ is a measurable function.
		\end{enumerate}
		The composite of two kernels $k, k'$ is the kernel $k''$ depicted below
		\[\begin{tikzcd}
			& (\Omega', \Sigma') \arrow[dr, "k'"] \\
			(\Omega, \Sigma) \arrow[dashed, rr, "k''"] \arrow[ur, "k"] && (\Omega'', \Sigma'')
		\end{tikzcd}\]
		defined as 
		\[ k''(\omega, S) = \int_{\omega' \in \Omega'} k'(\omega', S) d k(\omega, d\omega') , \quad \forall \omega\in \Omega, \, S \in \Sigma'' .\]
	\end{definition}
	
	Recall that a category $\calX$ is said to be \emph{enriched} over the monoidal category $\Context$ if the following are satisfied :
	\begin{enumerate}[(i)]
		\item for every $a,b\in ob(\Context)$, there is an object $\calX(a,b) \in ob(\Context)$;
		\item for every $f\in \Hom_{\calX}( a; b)$, there is a morphism $ f_{\Context} \in \Hom_{\Context} \left( 1_{\Context}; \calX(a,b) \right)$. If $a=b$ and $f = \Id_a$, then the corresponding morphism is denoted as $\start_a : 1_{\Context} \to \calX(a,a) $;
		\item for every $a,b,c \in ob(C)$, there is a map $\circ_{a,b,c} \in \Hom_{\Context}\left( \calX(b,c) \otimes_{\Context} \calX(a,b); \calX(c,a) \right)$ such that the following three commutations hold :
		\[\begin{tikzcd} [column sep = huge]
			\left( \calX(c,d) \otimes_{\Context} \calX(b,c) \right) \otimes_{\Context} \calX(a,b) \arrow{d}[swap]{\text{assoc}} \arrow{r}{\circ_{b,c,d} \otimes_{\Context} \Id_{\calX(a,b)} } & \calX(b,d) \otimes_{\Context} \calX(a, b) \arrow{r}{\circ_{a,b,d}} & \calX(a,d) \\
			\calX(c,d) \otimes_{\Context} \left( \calX(b,c) \otimes_{\Context} \calX(a,b) \right) \arrow{r}[swap]{ \Id_{\calX(c,d)} \otimes_{\Context} \circ_{a,b,c} } & \calX(c,d) \otimes_{\Context} \calX(a,c) \arrow{ur}[swap]{\circ_{a,c,d}}
		\end{tikzcd}\]
		\[\begin{tikzcd} [column sep = huge]
			1_{\Context}\otimes_{\Context} \calX(a,b) \arrow[bend right = 10]{rr}[swap]{ \text{left identity} } \arrow{r}{\Id_b \otimes_{\Context} \Id_{\calX(a,b)}} & \calX(b,b) \otimes_{\Context} \calX(a,b) \arrow{r}{\circ_{a,b,b}} & \calX(a,b)
		\end{tikzcd}\]
		\[\begin{tikzcd} [column sep = huge]
			\calX(a,b) \otimes_{\Context} 1_{\Context} \arrow[bend right = 10]{rr}[swap]{ \text{right identity} } \arrow{r}{\Id_{\calX(a,b)} \otimes_{\Context} \Id_a } & \calX(a,b) \otimes_{\Context} \calX(a,a) \arrow{r}{\circ_{a,a,b}} & \calX(a,b)
		\end{tikzcd}\]
	\end{enumerate}
	
	In our case, the 1-object category $\Time$ plays the role of the general category $\calX$. This implies the following :  
	\begin{enumerate}[(i)]
		\item the hom-set 
		\begin{equation} \label{eqn:def:TimeB}
			\TimeB:= \Hom_{\Time}(*,*)
		\end{equation}
		is an object in the context category $\Context$.
		\item For every morphism $t\in \Hom_{\Time}(*,*)$, there is a $\Context$-morphism $f_t \in \Hom_{\Context} \left( 1_{\Context}; \TimeB \right)$. When $t = \Id_{*}$, we denote $f_t$ by $\start_{\star}$.
		\item The object $\TimeB$ is a monoid, i.e., there is a $\Context$ morphism
		\[\begin{tikzcd} \TimeB \otimes \TimeB \arrow[rr, "\add"] && \TimeB \end{tikzcd}\]
		such that the following commutations hold :
		\begin{equation} \label{eqn:TimeB:id}
			\begin{tikzcd} 
				\TimeB \otimes \TimeB \otimes \TimeB \arrow{dr}[swap]{ \TimeB \otimes \add } \arrow{rr}{ \add \otimes \TimeB } && \TimeB \otimes \TimeB \arrow{d}{\add} \\
				& \TimeB \otimes \TimeB \arrow{r}[swap]{\add} & \TimeB
			\end{tikzcd} , \quad \begin{tikzcd} 
				1_{\Context}\otimes \TimeB \arrow[bend right = 10]{drr}[swap]{ \begin{array}{c}\text{left} \\ \text{identity} \end{array} } \arrow{rr}{\start_{\star}\otimes \TimeB} && \TimeB \otimes \TimeB \arrow{d}{\add} && \TimeB \otimes 1_{\Context} \arrow{ll}[swap]{ \TimeB \otimes \start_{\star}} \arrow[bend left = 10]{dll}{ \begin{array}{c}\text{right} \\ \text{identity} \end{array} } \\
				&& \TimeB
			\end{tikzcd}
		\end{equation}
	\end{enumerate}
	
	The first diagram establishes the associativity of the composition rule. The second two diagrams uphold some basic rules of combinations of $\start_{\star}$ and $\add$. Together, these commuting diagrams preserve the algebraic structure of $\Time$ within $\calC$. The last two commutation loops is reminiscent of the idea of a \emph{dynamical system over a monoid action} \cite{BehrischEtAl2017dyn}. The ideas present in that categorification  of dynamical systems relies on a complicated algebraic structure of time. Our construct of time is significantly simpler, and the algebraic properties of the action of time is encoded within the enriched structure of the functor $\tilde{\Phi}$.
	
	One major consequence of Assumption \ref{A:monoidal}  is the interpretation provided by \eqref{eqn:def:TimeB}. Our initial interpretation of time was an abstract semigroup. Each element of that semigroup, or equivalently, each endomorphism of the unique object of $\Time$ is interpreted as time instants. It is only Assumption \ref{A:monoidal} which enables the loose collection $\Endo\paran{ \star_{\calT} }$ of endomorphisms to be realized as an object in $\Context$ itself.
	
	\paragraph{Closed monoidal structure} We are accustomed to viewing functional spaces such as endomorphisms and path-spaces as topological spaces of their own. The collection of maps between two sets is itself a set. Our next assumption generalizes this property for the context $\Context$ :
	
	\begin{Assumption} \label{A:closed}
		The context category $\Context$ is a closed symmetric monoidal category. 
	\end{Assumption}
	
	While Assumption \ref{A:monoidal} already assumed $\Context$ to be monoidal, Assumption \ref{A:closed} makes the additional assumption of $\Context$ being closed too. Recall that being \emph{closed, monoidal} means that for every two objects $y, z\in \calC$ there is a special object $\IntHom{y}{z}$ called the \emph{exponential} object or \emph{internal-hom} object. It is defined to be the right adjoint to the monoidal product. It has the following natural bijection between hom-sets
	\begin{equation} \label{eqn:closed_mnd:1}
		\Hom_{\calC} \paran{ y\otimes x; z } \xrightarrow{\cong} \Hom_{\calC} \paran{ x; \IntHom{y}{z} }.
	\end{equation}
	Taking $x=1_{\calC}$ in \eqref{eqn:closed_mnd:1} implies
	\begin{equation} \label{eqn:closed_mnd:2}
		\Hom_{\calC} \paran{ y; z } \xrightarrow{\cong} \Hom_{\calC} \paran{ 1_{\calC}; \IntHom{y}{z} }.
	\end{equation}
	Equation \eqref{eqn:closed_mnd:2} explains how the object $\IntHom{y}{z}$ is an alternate means of describing the Hom-set $\Hom(y;z)$. The elements of the object $\IntHom{y}{z}$ are in bijective correspondence with the collection of morphisms from $y$ to $z$. Now suppose that  we take $x=\IntHom{y}{z}$ in \eqref{eqn:closed_mnd:1}. Then we get a bijection
	\begin{equation} \label{eqn:closed_mnd:3}
		\Hom_{\calC} \paran{ y\otimes \IntHom{y}{z}; z } \xrightarrow{\cong} \Hom_{\calC} \paran{ \IntHom{y}{z}; \IntHom{y}{z} }, \quad \eval_{y,z} \mapsto \Id_{\IntHom{y}{z}} .
	\end{equation}
	As indicated, there is a morphism $\eval_{y,z} : y\otimes \IntHom{y}{z} \to z$ which corresponds to the identity morphism of $\IntHom{y}{z}$. This is called the \emph{evaluation} morphism of $y,z$. Intuitively, the evaluation morphism $\eval_{y,z}$ assigns to every \emph{internal function} $\phi \in \IntHom{y}{z}$, and evaluation point in $y$, a point in $z$. Evaluation morphisms are a categorical realization of the action of usual set-maps, but in a general categorical context. 
	
	We present two useful examples of closed monoidal categories.
	
	\begin{definition} [Contractive maps 1] \label{def:MetriCat}
		The category $\MetricCat$ of metric spaces and contractive maps are closed with respect to their Cartesian products. This is because the collection of contractive maps between two metric spaces $X, Y$ is itself a metric space with the supremum norm.
	\end{definition}
	
	\begin{definition} [Contractive maps 2] \label{def:CompactMetriCat}
		The collection of compact metric spaces form a sub-category of $\MetricCat$. It also preserved the closure property \cite[see]{Leinster2022eventual, BuragoEtAl2001metric}, and hence is another instance of a closed monoidal category.
	\end{definition}
	
	Next we take note of two related notions of adjunction, which will be crucial in our discussion.
	
	\begin{definition} [Two-types of adjoints]
		Let $\Phi : \TimeB \otimes \Omega \to \Omega$ be a morphism in a closed symmetric monoidal category. Since $\Context$ is symmetric and closed, we can take two kinds of adjoints of $\tilde{\Phi}$ :
		\begin{equation}
			\begin{aligned}
				\Phi^\sharp&: \TimeB \to \IntHom{\Omega}{\Omega} \\
				\Phi^\flat&: \Omega \to \IntHom{\TimeB}{\Omega}.
			\end{aligned}
		\end{equation}
		We call the former the \emph{sharp adjoint} and the latter the \emph{flat adjoint}.
	\end{definition}
	The sharp adjoint corresponds to the usual identification of a flow $\Phi(t,x)$ with a family of endomorphisms $\Phi^t$. It raises the time variable. Similarly, the flat adjoint corresponds to the identification  with a family of curves $\Phi_x$ starting from the initial point $x$. 
	
	\paragraph{Enriched objects} Assumption \ref{A:closed} enables many essential constructs of dynamical systems theory to be realized as objects of $\Context$. Pick an element $\Omega$ from $\Context$. An important object is
	\begin{equation} \label{eqn:def:Endo}
		\Endo (\Omega) := \IntHom{\Omega}{\Omega} ,
	\end{equation}
	the {endomorphism} object associated to domain $\Omega$. It allows the collection of self maps of $\Omega$ to be interpreted as an object in $\Context$. Another important consequence is the creation of an object 
	\begin{equation} \label{eqn:def:Path:1}
		\PathF(\Omega) := \IntHom{\TimeB}{\Omega} ,
	\end{equation}
	which we interpret as the \emph{path-space} of $\Time$-indexed paths in domain $\Omega$. It is the collection of all possible \emph{orbit}-s on the domain $\Omega$, not limited to those generated by the dynamics of $F$. Due to Assumption \ref{A:monoidal}, this collection is also of the same type / category as objects in $\Context$. It is alternatively known as the space of  \emph{sample-paths} and is a fundamental object of study in the theory of stochastic processes \cite[e.g.]{Doob1953book}.
	
	The time object $\TimeB$ and endomorphism object $\Endo(\Omega)$ together enable two new definitions of a dynamical system : 
	
	\begin{definition}[Interpretation 2. Parametric endomorphisms] \label{desc:dyn:2}
		A dynamical system is a parameterized family of maps $\Phi^\sharp : \TimeB \to \Endo(\Omega)$.
	\end{definition}
	
	Thus this interpretation of a dynamical system is as a morphism 
	\begin{equation} \label{eqn:def:hatPhi}
		\Phi^\sharp:\TimeB \to \IntHom{\Omega}{\Omega}
	\end{equation}
	Now note that by Assumption \ref{A:monoidal}, $\Phi^\sharp$ has an adjoint morphism 
	\begin{equation} \label{eqn:def:tildePhi}
		\Phi:\TimeB \otimes \Omega \to \Omega .
	\end{equation}

	\begin{definition} [Interpretation 3. Pre-flow] \label{desc:dyn:3}
		A dynamical system is a pre-flow on the domain $\Omega$, meaning a morphism / map $\Phi : \TimeB \times \Omega \to \Omega$.
	\end{definition}
	
	Thus we have two different interpretations of dynamics- as a  parameterized family $\Phi^\sharp$ of self-maps \eqref{eqn:def:hatPhi} -- and as as a pre-flow \eqref{eqn:def:tildePhi}. 
	We shall impose some algebraic conditions on a pre-flow to obtain a \emph{flow} or a \emph{left-action} of a semi-group, in Definition \ref{desc:dyn:5}.
	At this point the three descriptions are different and not interchangeable, in a general category $\Context$. For example, the flow map $\Phi^t$ generated by a vector field leads to an action of $\real$ on the group of diffeomorphisms, and the dependence on $t$ is smooth. However, a time rescaling such as $t \mapsto \Phi^{t^2}$ is still a dynamical system according to Interpretations 2 and 3, but not according to Interpretation 1. We shall need more assumptions on $\Context$ and $\tilde{\Phi}$ to enable these descriptions to be  interchangeable. We end this section by presenting three important dynamical systems that can be constructed out of a pre-flow $\Phi$.
	
	\paragraph{Secondary dynamics} We assume Assumptions \ref{A:monoidal} and \ref{A:closed} hold in the following constructions. 
	
	\begin{enumerate}
		\item \textbf{Shift map.} One of the most important instances of a flow is provided by a natural action of time on the elements of the path space. Recall from \eqref{eqn:def:Path:1} that $\PathF(\Omega)$ is just the internal hom object $\IntHom{\TimeB}{\Omega}$. Consider the following composite morphism shown as a dashed arrow below on the left :
		\begin{equation} \label{eqn:def:shift}
			\begin{tikzcd}
				\TimeB \otimes \TimeB \otimes \IntHom{\TimeB}{\Omega} \arrow[r, dashed] \arrow{dr}[swap]{ \add \otimes \IntHom{\TimeB}{\Omega} } & \Omega \\
				& \TimeB \otimes \IntHom{\TimeB}{\Omega} \arrow{u}[swap]{ \eval_{T, \Omega} }
			\end{tikzcd}
			\begin{tikzcd} [scale cd = 0.8] {} \arrow[rr, harpoon, shift left = 1, "\text{right adj.}"] && {} \arrow[ll, harpoon, shift left = 1, "\text{left adj.}"] \end{tikzcd}
			\begin{tikzcd} 
				\TimeB \otimes \IntHom{\TimeB}{\Omega} \arrow{d}[dashed]{\shift_{\Omega}} \\ \IntHom{\TimeB}{\Omega}
			\end{tikzcd}
		\end{equation}
		By \eqref{eqn:closed_mnd:1} this arrow corresponds naturally to the dashed arrow shown above on the right. Thus for each object $\Omega$, the path space $\PathF(\Omega)$ has a natural dynamics in the sense of Description \ref{desc:dyn:3} : 
		\[\shift: \TimeB \otimes \PathF(\Omega) \to \PathF(\Omega).\]
		The shift map $\shift$ acts on the element of the path-space via $\add$. The morphism $\add$ contains the semigroup composition rule of $\Time$. Equation \eqref{eqn:def:shift} reveals how this is transformed into a compositional rule on the paths with $\Time$-structure. Theorem \ref{thm:1} presents the functorial nature of this morphism.
		\item \textbf{Transfer operator} Now take any object $X\in \calC$. Equation \eqref{eqn:def:tildePhi} leads to the following composite morphism shown on the left :
		\begin{equation} \label{eqn:def:transfer}
			\begin{tikzcd} [scale cd = 0.7]
				X \otimes \TimeB \otimes \IntHom{X}{\Omega} \arrow[dashed, d] \arrow{rrr}{\swap \otimes \IntHom{X}{\Omega}} &&& \TimeB \otimes X \otimes \IntHom{X}{\Omega} \arrow{d}{ \TimeB \otimes \eval_{X, \Omega} } \\
				\Omega &&& \TimeB \otimes \Omega \arrow[lll, "\Phi"']
			\end{tikzcd}
			\begin{tikzcd} [scale cd = 0.8] {} \arrow[rr, harpoon, shift left = 1, "\text{right adj.}"] && {} \arrow[ll, harpoon, shift left = 1, "\text{left adj.}"] \end{tikzcd}
			\begin{tikzcd} [scale cd = 0.7] \TimeB \otimes \IntHom{X}{\Omega} \arrow[d, "\Transfer{\Phi}"] \\ \IntHom{X}{\Omega} \end{tikzcd}
		\end{equation}
		The other two morphisms shown above are obtained by successive adjoint operations. We call $\Transfer{\Phi}$ the \emph{transfer operator} corresponding to the dynamics $\tilde{\Phi}$. The object $\IntHom{X}{\Omega}$ can be interpreted as the collection of $X$-patterns in $\Omega$. The action of $\tilde{\Phi}$ on $\Omega$ induces an action on such patterns. Theorem \ref{thm:2} presents the functorial nature of this morphism. 
		\item \textbf{Koopman operator} Similar to the transfer operator, one has a Koopman operator, which act on patterns from $\Omega$ into an object $X$.
		\[\begin{tikzcd} [scale cd = 0.8]
			\TimeB \otimes \Omega \otimes \IntHom{\Omega}{X} \arrow{d}[swap]{ \Phi \otimes \IntHom{\Omega}{X} } && \Omega \otimes \TimeB \otimes \IntHom{\Omega}{X} \arrow{ll}[swap]{\swap \otimes \IntHom{\Omega}{X}} \arrow[d, dashed] \\ \Omega \otimes \IntHom{\Omega}{X} \arrow{rr}[swap]{ \eval_{X, \Omega} } && X
		\end{tikzcd}
		\begin{tikzcd} [scale cd = 0.8] {} \arrow[rr, harpoon, shift left = 1, "\text{right adj.}"] && {} \arrow[ll, harpoon, shift left = 1, "\text{left adj.}"] \end{tikzcd}
		\begin{tikzcd} [scale cd = 0.8]
			\TimeB \otimes \IntHom{\Omega}{X} \arrow{d}{U_{\Phi} } \\ \IntHom{\Omega}{X}
		\end{tikzcd}
		\]
		In other words $U_{\Phi}$ is a pre-flow in the sense of Definition \ref{desc:dyn:3}.
	\end{enumerate}
	
	So far we have seen how an additional structural assumption on $\Context$ leads to an interpretation of the collection of endomorphisms and paths as objects of $\Context$ itself. This also enables the creation of the $\shift$ morphism, which is a dynamical system according to the interpretation in Definition \ref{desc:dyn:3} of a dynamical system. We have also seen the creation of the transfer and Koopman operators, which are dynamical systems not according to Definition \ref{desc:dyn:1} but according to Definitions \ref{desc:dyn:2} and \ref{desc:dyn:3}. To make the two new interpretations, along with $\shift$, Koopman and transfer morphisms conform with Definition \ref{desc:dyn:1}, we need yet another structural assumption, to be described next.
	
	\section{Enriched functors} \label{sec:enrich_fnctr} 
	
	\begin{figure} [!t]
		\center
		\begin{tikzpicture}[scale=0.6, transform shape]
			\input{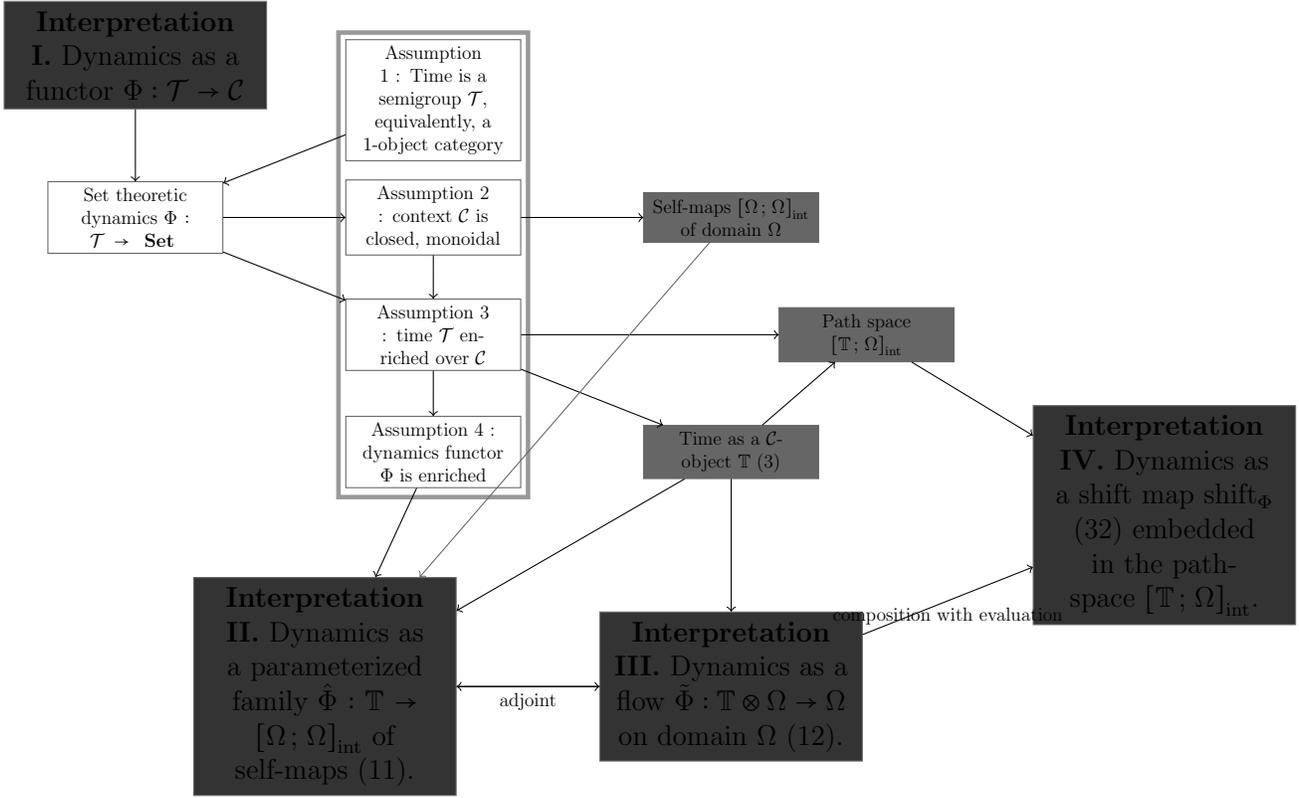}
		\end{tikzpicture}
		\caption{Many interpretations of dynamical systems. Dynamics are interpreted in different ways in different areas of mathematics. We start by interpreting dynamics as the action of a semigroup. The diagram depicts how this interpretation could lead to three other interpretations $\Phi^\sharp$, $\Phi$ and $\shiftF{\Phi}$ respectively. This is done by combining with different assumptions on the context category $\Context$, time semigroup $\Time$, and functor $\tilde{\Phi}$. These assumptions shown in the center of the diagram are categorical in nature. They lead to the basic concepts shown in dark boxes. Any discussion of a dynamical system requires a consideration of three associated objects - the abstract algebraic notion of time; the collection of $\Time$-indexed paths in $\Context$; and the collection of self-maps on a domain $\Omega$ with $\Context$. If $\Context$ is closed, all of these can now be interpreted as objects of $\Context$ itself. The arrows in the diagram represents logical implications. Each box represents a concept or interpretation. It can be constructed by assuming the contents of its preceding boxes. Thus the interpretation $\Phi^\sharp$ as a parameterized family requires the concept of an time-object, self-map object, and the assumption of $\tilde{\Phi}$ being an enriched functor. The interpretation $\Phi$ as a pre-flow then follows via an adjoint relation guaranteed by the closed monoidal structure of $\calC$. The interpretation $\shiftF{\Phi}$ as a sub-shift is a direct consequence of the pre-flow-interpretation, time-object and self-maps. The top-left corner of the diagram shows that in fact, if the context $\Context$ can be placed in $\SetCat$, then one obtains a dynamics into $\SetCat$ by composition. In the classical analytic approach to dynamics, the functor $\tilde{\Phi}$ is replaced by a continuous family of self-maps. As a result, it is easier to switch between the 4 interpretations. In our categorical setting, we need the crucial Assumptions on enrichment to make the connect between Interpretation 1 and 2. This identification of the structural components allows a synthetic approach to dynamical systems theory.} 
		\label{fig:dyn_interpret}
	\end{figure} 
	
	Assumptions \ref{A:dyn} and \ref{A:monoidal} enable a description of the dynamics  but that only captures the semigroup property. Depending on the context such as those listed in Table \ref{tab:TC}, one often needs to establish the dependence of the dynamics law on time as continuous, linear, measurable or differentiable. Thus there is a need to extend the mathematical properties encoded within context $\Context$ to the correspondence $t \mapsto \Phi(t)$ as well. A general functor $\tilde{\Phi} : \Time \to \Context$ from Assumption \ref{A:dyn} does not respect the $\Context$-enriched structure of $\Time$. As a result, $\tilde{\Phi}$ does not automatically lead to a morphism  involving the time object $\TimeB$ within the category $\Context$. What we need is the concept of an \emph{enriched functor}.
	
	An \emph{enriched functor} $F : \calX \to \calY$ between two categories enriched over a monoidal category $\calM$ is an ordinary functor such that between any two pairs of objects $x, x'\in \calX$, there is an induced $\calM$ morphism 
	\begin{equation} \label{eqn:def:enrich_func:1}
		F_{x, x'} : \calX \paran{ x; x' } \to \calY \paran{ Fx; Fx' } ,
	\end{equation}
	which respects unital composition  : 
	\begin{equation} \label{eqn:def:enrich_func:2}
		\begin{tikzcd}
			1_{\calC} \arrow{rr}{ \iota_{Fx} } \arrow{drr}[swap]{ \iota_{x} } && \calY \Comma{Fx}{Fx} \\
			&& \calX \Comma{x}{x} \arrow{u}[swap]{ F_{x,x} }
		\end{tikzcd}
	\end{equation}
	and compositionality :
	\begin{equation} \label{eqn:def:enrich_func:3}
		\begin{tikzcd}
			\calX \Comma{x'}{x''} \otimes \calX \Comma{x}{x'} \arrow{rr}{ \circ_{x, x', x''} } \arrow{d}[swap]{ F_{x', x''} \otimes F_{x, x'} } && \calX \Comma{x}{x''} \arrow{d}{ F_{x, x''} } \\
			\calY \Comma{Fx'}{Fx''} \otimes \calX \Comma{Fx}{Fx'} \arrow{rr}[swap]{ \circ_{Fx, Fx', Fx''} } && \calY \Comma{Fx}{Fx''} \\
		\end{tikzcd}
	\end{equation}
	When $\calY = \calM = \Context$ and $\calX = \Time$, then $\calX$ has only one object. Let $\Omega$ be its image under $F$. Then \eqref{eqn:def:enrich_func:1} comprises of only one morphism
	\begin{equation} \label{eqn:def:enrich_func:Time:1}
		F^\sharp : \TimeB \to \Endo(\Omega) ,
	\end{equation}
	and the conditions \eqref{eqn:def:enrich_func:2}  and \eqref{eqn:def:enrich_func:3} become :
	\begin{equation}\label{eqn:def:enrich_func:Time:2}
		\begin{tikzcd}
			1_{\calC} \arrow{rr}{ \iota_{\Omega} } \arrow{drr}[swap]{ \iota_{T} } &&  \Endo(\Omega) \\
			&& \TimeB \arrow{u}[swap]{ F^\sharp }
		\end{tikzcd} ,\quad 
		\begin{tikzcd}
			\TimeB \otimes \TimeB \arrow{rr}{ \circ } \arrow{d}[swap]{ F^\sharp \otimes F^\sharp } && \TimeB \arrow{d}{ F^\sharp } \\
			\Endo(\Omega) \otimes \Endo(\Omega) \arrow{rr}[swap]{ \circ } && \Endo(\Omega) \\
		\end{tikzcd}
	\end{equation}
	Let $F,G : \calX \to \calY$ be two enriched functors. Then an \emph{enriched natural transformation} $\omega : F \Rightarrow G$ is a family of morphisms
	\begin{equation} \label{eqn:def:enrich_nat:1}
		\omega_x : 1_{\calC} \to \calY \Comma{Fx}{Gx} , \quad \forall x \in \calX,
	\end{equation}
	such that the following commutations hold :
	\begin{equation} \label{eqn:def:enrich_nat:2}
		\forall x,x'\in \calX \;:\;
		\begin{tikzcd}
			\calX \Comma{x}{x} \otimes 1_{\calC} \arrow{rr}{ G_{x, x'} \otimes \omega_x } && \calY \Comma{Gx}{Gx'} \otimes \calY \Comma{Fx}{Gx} \arrow{d}{ \circ^{\calC}_{ Fx, Gx, Gx' } } \\
			\calX \Comma{x}{x} \arrow{u}{\cong} \arrow{d}[swap]{\cong} && \calY \Comma{Fx}{Gx'} \\
			1_{\calC} \otimes \calX \Comma{x}{x} \arrow{rr}[swap]{ \omega_{x'} \otimes F_{x, x'} } && \calY \Comma{Fx'}{Gx'} \otimes \calY \Comma{Fx}{Fx'} \arrow{u}[swap]{ \circ^{\calC}_{ Fx, Fx', Gx' } }
		\end{tikzcd}
	\end{equation}

	We would like to assume that $\tilde{\Phi}$ is an enriched functor. We would require both its domain $\Time$ and co-domain $\Context$ to be categories enriched over the same monoidal category, which in this case is also $\Context$. The domain $\Time$ is enriched by Assumption \ref{A:monoidal}. The codomain $\Context$ is automatically enriched, as observed below :
	
	\begin{lemma} \label{lem:dj9da3}
		A closed monoidal category is enriched over itself. The internal hom functor plays the role of the enrichment.
	\end{lemma}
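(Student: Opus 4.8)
The plan is to exhibit the self-enrichment explicitly, taking the hom-object between $a,b \in ob(\calC)$ to be the internal hom $\IntHom{a}{b}$, and then to verify the three defining clauses of an enriched category in turn. The first datum---an object $\calC(a,b) \in ob(\calC)$ for each pair---is immediate from Assumption \ref{A:closed}, since $\IntHom{a}{b}$ exists by definition of a closed monoidal category. For the second datum I would invoke the bijection \eqref{eqn:closed_mnd:2} with $y=a$, $z=b$: each ordinary morphism $f \in \Hom_{\calC}(a;b)$ corresponds to a global element $1_{\calC} \to \IntHom{a}{b}$, and in particular $\Id_a$ corresponds to the element I name $\start_a : 1_{\calC} \to \IntHom{a}{a}$. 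This is exactly the map required in clause (ii) of the definition of enrichment, so that clause holds essentially by fiat.

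Next I would construct the composition morphism $\circ_{a,b,c} : \IntHom{b}{c} \otimes \IntHom{a}{b} \to \IntHom{a}{c}$ (the correct target being $\IntHom{a}{c}$). By the adjunction \eqref{eqn:closed_mnd:1} applied with $y=a$, giving a morphism into $\IntHom{a}{c}$ is the same as giving a morphism $a \otimes \IntHom{b}{c} \otimes \IntHom{a}{b} \to c$. I would build the latter by evaluating twice: first use the symmetry $\swap$ (available by Assumption \ref{A:monoidal}) together with the associator to bring $a$ adjacent to $\IntHom{a}{b}$, then apply $\eval_{a,b}$ to land in $b$, and finally apply $\eval_{b,c}$ to land in $c$. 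Transposing this composite back across \eqref{eqn:closed_mnd:1} yields $\circ_{a,b,c}$; intuitively it is nothing but ``apply the inner function, then the outer function,'' internalised.

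The substance of the lemma lies in checking the three coherence diagrams of clause (iii)---associativity and the two unit laws. My strategy throughout would be to compare morphisms with common codomain an internal hom $\IntHom{a}{d}$ by transposing across \eqref{eqn:closed_mnd:1}: since this is a bijection, two such morphisms agree iff their transposes into $d$ agree, and those transposes are composites of evaluation maps, associators and symmetries. The fact I would use repeatedly is the counit relation implicit in \eqref{eqn:closed_mnd:3}, namely that for any $g : x \to \IntHom{y}{z}$ the morphism corresponding to it under \eqref{eqn:closed_mnd:1} is $\eval_{y,z} \circ \paran{ \Id_y \otimes g }$. For the two unit laws, transposing reduces each triangle to the defining property of $\start_a$ from \eqref{eqn:closed_mnd:2} together with the unitor coherence of the monoidal structure, which is short.

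The associativity diagram is the main obstacle. After transposing both legs into a morphism $a \otimes \IntHom{c}{d} \otimes \IntHom{b}{c} \otimes \IntHom{a}{b} \to d$, each leg reduces to applying $\eval_{a,b}$, $\eval_{b,c}$, $\eval_{c,d}$ in the same order, and equality then follows purely from Mac Lane's coherence theorem for the associators and symmetries together with naturality of $\eval$. I expect the difficulty here to be entirely bookkeeping---tracking the associativity and symmetry isomorphisms through both composites---rather than anything conceptual; invoking coherence is precisely what lets me suppress that bookkeeping and conclude that all three diagrams commute, so that $\paran{ \IntHom{a}{b}, \start_a, \circ_{a,b,c} }$ endows $\calC$ with the structure of a category enriched over itself.
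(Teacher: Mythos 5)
Your proposal is correct, but note that the paper does not actually prove this lemma at all: it declares it standard and cites \cite[Sec 1.6]{Kelly1982enriched}. What you have written out is, in substance, the content of that reference --- the canonical self-enrichment of a closed symmetric monoidal category. Your data are the standard ones: hom-objects $\IntHom{a}{b}$, identity elements $\start_a : 1_{\calC} \to \IntHom{a}{a}$ obtained by transposing $\Id_a$ across \eqref{eqn:closed_mnd:2}, and composition obtained by transposing the double evaluation $a \otimes \IntHom{b}{c} \otimes \IntHom{a}{b} \to c$ back across \eqref{eqn:closed_mnd:1}; and your verification strategy --- compare morphisms into an internal hom by comparing their transposes, unfold each transpose via the counit identity sending $g : x \to \IntHom{y}{z}$ to $\eval_{y,z} \circ \paran{ \Id_y \otimes g }$, then finish by naturality of $\eval$ and monoidal coherence --- is exactly how the coherence diagrams are checked in the literature. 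Two details in your write-up are worth endorsing: you correctly flagged that the codomain of $\circ_{a,b,c}$ must be $\IntHom{a}{c}$ (the paper's definition of enrichment misprints it as $\calX(c,a)$), and your appeal to the symmetry $\swap$ to move $a$ adjacent to $\IntHom{a}{b}$ is legitimate here precisely because Assumptions \ref{A:monoidal} and \ref{A:closed} posit a \emph{symmetric} closed monoidal category; in a non-symmetric setting one would instead have to arrange the evaluation conventions compatibly. Relative to the paper, your argument buys self-containedness: it replaces an external citation by an explicit construction, at the modest cost of delegating the final diagram chases to Mac Lane's coherence theorem, which is an acceptable and standard way to suppress that bookkeeping.
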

	
	Lemma \ref{lem:dj9da3} is a standard result and may be found in \cite[Sec 1.6]{Kelly1982enriched}. This observation enables the additional assumption :
	
	\begin{Assumption} \label{A:enrich_fnctr}
		The functor $\tilde{\Phi}$ from Assumption \ref{A:dyn} is an enriched functor.
	\end{Assumption}
	
	Assumption \ref{A:enrich_fnctr} enables the semigroup action $\tilde{\Phi}$ to be interpreted as a parameterized family of self-maps. Note that every category is enriched over $\SetCat$, and so is every ordinary functor. Thus Assumptions \ref{A:monoidal}, \ref{A:closed} and \ref{A:enrich_fnctr} will be satisfied if $\Context = \SetCat$. Given an enriched functor $\tilde{\Psi} : \Time \to \Context$, equations \eqref{eqn:def:enrich_func:1}, \eqref{eqn:def:enrich_func:2} and \eqref{eqn:def:enrich_func:3} imply the existence of an object $\Omega$ of $\Context$ and a morphism $\Psi^{\sharp} : \TimeB \to \Endo(\Omega)$ such that the following commutations hold : 
	\begin{equation} \label{eqn:enrch_fnctr_time:1}
		\begin{tikzcd}
			1_{\Context} \arrow{r}{ \start_{\star} } \arrow{dr}[swap]{ \start_{\Omega} } & \TimeB \arrow{d}{ \Psi^{\sharp} } \\
			& \Endo (\Omega)
		\end{tikzcd} , \quad 
		\begin{tikzcd}
			\TimeB \otimes \TimeB \arrow{d}[swap]{ \Psi^{\sharp} \otimes \Psi^{\sharp} } \arrow{rr}{\add} && \TimeB \arrow{d}{ \Psi^{\sharp} } \\
			\Endo(\Omega) \otimes \Endo(\Omega) \arrow{rr}{ \circ_{\Omega, \Omega, \Omega} } && \Endo(\Omega)
		\end{tikzcd}
	\end{equation}
	Here, the composition morphism $\circ$ is defined by the adjoint of
	\[\begin{tikzcd}
		\IntHom{\Omega}{\Omega} \otimes \IntHom{\Omega}{\Omega} \otimes \Omega \arrow{rrr}{ \IntHom{\Omega}{\Omega} \otimes \eval_\Omega} &&& \IntHom{\Omega}{\Omega} \otimes \Omega \arrow{rr}{\eval_\Omega} && \Omega.
	\end{tikzcd}\]
	We call $\Psi^\sharp$ the \textit{parametric form} of the enriched functor $\tilde{\Psi}$.
	
	\begin{lemma} \label{lem:enrch:1} 
		Let Assumptions \ref{A:dyn},  \ref{A:monoidal}, \ref{A:closed} and \ref{A:enrich_fnctr} hold.
		\begin{enumerate} [(i)]
			\item Given any semigroup $\Time$ enriched over a closed monoidal category $\Context$, an enriched functor from $\tilde{\Psi} : \Time \to \Context$ corresponds uniquely with a morphism $\Psi^\sharp : \TimeB \to \Endo(\Omega)$ that satisfies \eqref{eqn:enrch_fnctr_time:1}. 
			\item A morphism $\Psi : \TimeB \otimes \Omega \to \Omega$ is the the left adjoint of a parametric form of an enriched functor iff
			\begin{equation} \label{eqn:enrch_fnctr_time:2}
				\begin{tikzcd}
					1_{\Context} \otimes \Omega \arrow{dr}[swap]{ \start_{\star} \otimes \Omega } & \Omega \arrow{l}[swap]{=}  \\
					& \TimeB \otimes \Omega \arrow{u}[swap]{ \Psi }
				\end{tikzcd} , \quad \begin{tikzcd}
					\TimeB \otimes \TimeB \otimes \Omega \arrow{d}[swap]{ \add \otimes \Omega } \arrow{rr}{\TimeB \otimes {\Psi}} && \TimeB \otimes \Omega \arrow{d}{ {\Psi} } \\
					\TimeB \otimes \Omega \arrow{rr}{ {\Psi} } && \Omega
				\end{tikzcd}
			\end{equation}  
		\end{enumerate}
	\end{lemma}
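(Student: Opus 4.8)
Both parts are proved by \emph{unwinding} the general definitions of an enriched functor and of the closed monoidal adjunction in the special case where the source $\Time$ has a single object $\star$; the only genuine labour is keeping track of the symmetry isomorphisms and of the definition of the composition morphism $\circ_{\Omega,\Omega,\Omega}$ recalled just above the lemma.

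For part (i) I would start from the observation that an enriched functor $\tilde\Psi : \Time \to \Context$ is in particular an ordinary functor, and since $\Time$ has the single object $\star$ this amounts to a choice of object $\Omega := \tilde\Psi(\star) \in \Context$. The enriched structure then supplies exactly one hom-component of the form \eqref{eqn:def:enrich_func:1}, namely the $\Context$-morphism $\Psi^\sharp := \tilde\Psi_{\star,\star} : \TimeB \to \Endo(\Omega)$, using $\TimeB = \Hom_{\Time}(\star,\star)$ and $\Endo(\Omega) = \IntHom{\Omega}{\Omega}$. Because there is only the object $\star$ to range over, the unital axiom \eqref{eqn:def:enrich_func:2} and the compositionality axiom \eqref{eqn:def:enrich_func:3} collapse to precisely the two diagrams \eqref{eqn:def:enrich_func:Time:2}, i.e. to the two commutations \eqref{eqn:enrch_fnctr_time:1}. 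This exhibits the forward assignment $\tilde\Psi \mapsto (\Omega,\Psi^\sharp)$. For the inverse I would check that any pair $(\Omega,\Psi^\sharp)$ satisfying \eqref{eqn:enrch_fnctr_time:1} reconstitutes an enriched functor by setting $\tilde\Psi(\star) = \Omega$ and $\tilde\Psi_{\star,\star} = \Psi^\sharp$: the two conditions in \eqref{eqn:enrch_fnctr_time:1} are exactly the hypotheses needed for \eqref{eqn:def:enrich_func:2}--\eqref{eqn:def:enrich_func:3}, so the construction is well defined and visibly inverse to the forward map. Uniqueness is then immediate, since the functor is determined on objects by $\Omega$ and on the unique hom-object by $\Psi^\sharp$.

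For part (ii) the starting point is the closed monoidal adjunction \eqref{eqn:closed_mnd:1}, which with $y=\Omega$, $x=\TimeB$, $z=\Omega$ gives a bijection $\Hom_{\Context}(\Omega \otimes \TimeB; \Omega) \cong \Hom_{\Context}(\TimeB; \Endo(\Omega))$; precomposing with $\swap_{\TimeB,\Omega}$ identifies the left-hand side with $\Hom_{\Context}(\TimeB \otimes \Omega; \Omega)$. Thus $\Psi : \TimeB \otimes \Omega \to \Omega$ and $\Psi^\sharp : \TimeB \to \Endo(\Omega)$ are in natural bijection, and the whole task reduces to showing that $\Psi^\sharp$ satisfies \eqref{eqn:enrch_fnctr_time:1} if and only if its transpose $\Psi$ satisfies \eqref{eqn:enrch_fnctr_time:2}. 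Since the correspondence is a bijection, it suffices to translate each of the two diagrams of \eqref{eqn:enrch_fnctr_time:1} into the matching diagram of \eqref{eqn:enrch_fnctr_time:2}; the ``iff'' follows automatically from injectivity of the adjunction. For the unital condition I would transpose the triangle $\Psi^\sharp \circ \start_\star = \start_\Omega$, using that $\start_\Omega$ is the name of $\Id_\Omega$, so that $\eval_{\Omega,\Omega}$ applied to it recovers the identity; by naturality of the adjunction in its argument the transpose is exactly the statement that $\Psi \circ (\start_\star \otimes \Id_\Omega)$ equals the canonical isomorphism $1_{\Context} \otimes \Omega \cong \Omega$, i.e. the left diagram of \eqref{eqn:enrch_fnctr_time:2}.

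The crux is the compositionality square, and this is the step I expect to be the main obstacle. I would expand the defining diagram of $\circ_{\Omega,\Omega,\Omega}$ quoted above the lemma, namely that $\circ_{\Omega,\Omega,\Omega}$ is the transpose of the two-fold evaluation $\eval_{\Omega,\Omega} \circ (\Id \otimes \eval_{\Omega,\Omega})$ on $\Endo(\Omega) \otimes \Endo(\Omega) \otimes \Omega$. Feeding $\Psi^\sharp \otimes \Psi^\sharp$ into this and transposing shows that $\circ_{\Omega,\Omega,\Omega} \circ (\Psi^\sharp \otimes \Psi^\sharp)$ corresponds to $\Psi \circ (\Id_{\TimeB} \otimes \Psi)$ up to the reordering isomorphisms, while the transpose of $\Psi^\sharp \circ \add$ is $\Psi \circ (\add \otimes \Id_\Omega)$. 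Equating these two transposes is precisely the right-hand square of \eqref{eqn:enrch_fnctr_time:2}. The one place demanding care is the shuffling of the symmetry isomorphisms $\swap$ and the associators so that both chains of evaluations land on the same ordering of the factors $\TimeB$, $\TimeB$ and $\Omega$; I would dispatch this by naturality of $\swap$ together with Mac Lane coherence, which guarantees that any two such reorderings agree, so that no essential difficulty remains beyond the diagram chase.
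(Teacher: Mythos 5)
Your proposal is correct and follows essentially the same route as the paper: part (i) by unwinding the one-object enriched-functor axioms into \eqref{eqn:enrch_fnctr_time:1} (which the paper dispatches as ``the discussion preceding the theorem''), and part (ii) by transposing the unit and composition laws across the closed-monoidal adjunction, computing transposes via the evaluation morphism (the paper's Lemma \ref{lem:idl3v:1}) and obtaining the ``iff'' from the bijectivity of transposition, exactly as in the paper's diagrams \eqref{eqn:enrch:1:1} and \eqref{eqn:enrch:1:2}.
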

	
	Lemma \ref{lem:enrch:1} follows from the basic theory of enriched categories. One needs to place $\Time$ and $\Context$ in the role of $\calX$ and $\calY$ respectively, in the definition of enriched functors. Take $\tilde{\Phi}$ as $F$ in \eqref{eqn:def:nat_transform}, $x,x'$ to be the unique object of $\Time$, and $y,y'$ to be the image $\Omega$ of $\tilde{\Phi}$.  The details of the proof can be found in Section \ref{sec:proof:enrch_fnctr_time}. Any morphism $\Psi^\sharp : \TimeB \to \Endo{\Omega}$ which also satisfies \eqref{eqn:enrch_fnctr_time:1} will be called a \emph{left action}. 
	Our first main result is that the shift morphism along the path space is the flow representation of an enriched functor : 
	
	\begin{theorem} [Semigroup action of shift operator] \label{thm:1}
		Let Assumptions \ref{A:dyn}, \ref{A:monoidal} and \ref{A:closed} hold. The shift morphism from \eqref{eqn:def:shift} describes a left action of $\Time$ on the object $\PathF(\Omega)$ of $\Context$. Thus it corresponds uniquely to an enriched functor from $\Time$ to $\Context$, with domain $\PathF(\Omega)$. 
	\end{theorem}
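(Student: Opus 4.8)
The plan is to reduce the statement to Lemma \ref{lem:enrch:1}. By part (ii) of that lemma, showing that $\shift_{\Omega}$ is a left action amounts to verifying the two commutations of \eqref{eqn:enrch_fnctr_time:2} with the path object $\PathF(\Omega)=\IntHom{\TimeB}{\Omega}$ playing the role of the domain object and $\shift_{\Omega}$ playing the role of $\Psi$. Once these hold, the last sentence of the theorem --- that the action corresponds uniquely to an enriched functor $\Time\to\Context$ with domain $\PathF(\Omega)$ --- is immediate from part (i). Because $\shift_{\Omega}$ is assembled canonically from $\add$ and $\eval_{T,\Omega}$, one can check the left-action axioms directly, without assuming a pre-existing enriched dynamics; this is why only the first three assumptions are required.

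Both diagrams in \eqref{eqn:enrch_fnctr_time:2} are identities between morphisms whose codomain is the internal hom $\IntHom{\TimeB}{\Omega}$, so the key device is to transpose each identity across the adjunction \eqref{eqn:closed_mnd:1}. Since that bijection sends a morphism $f$ into $\IntHom{\TimeB}{\Omega}$ to $\eval_{T,\Omega}\circ(\TimeB\otimes f)$, two such morphisms agree iff their transposes into $\Omega$ agree. Applying this bijection to the very definition of $\shift_{\Omega}$ in \eqref{eqn:def:shift} produces the single identity that powers the whole argument:
\begin{equation} \label{eqn:shift:charact}
	\eval_{T,\Omega}\circ(\TimeB\otimes\shift_{\Omega}) = \eval_{T,\Omega}\circ(\add\otimes\IntHom{\TimeB}{\Omega})
\end{equation}
as morphisms $\TimeB\otimes\TimeB\otimes\IntHom{\TimeB}{\Omega}\to\Omega$; intuitively, evaluating the shifted path at a time is the same as evaluating the original path at that time advanced by the shift amount.

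With \eqref{eqn:shift:charact} available I would settle the associativity square first. Transposing it and precomposing with $\eval_{T,\Omega}\circ(\TimeB\otimes-)$, I apply \eqref{eqn:shift:charact} twice along the composite branch $\shift_{\Omega}\circ(\TimeB\otimes\shift_{\Omega})$, peeling off one copy of $\shift_{\Omega}$ at a time, which rewrites that branch as $\eval_{T,\Omega}$ precomposed with $\add\circ(\add\otimes\TimeB)$ on the time factors; a single application of \eqref{eqn:shift:charact} rewrites the other branch $\shift_{\Omega}\circ(\add\otimes\PathF(\Omega))$ as $\eval_{T,\Omega}$ precomposed with $\add\circ(\TimeB\otimes\add)$. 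The two transposes then coincide exactly by the associativity of $\add$, i.e.\ the first diagram of \eqref{eqn:TimeB:id}. For the unit triangle I transpose likewise and apply \eqref{eqn:shift:charact} once; the time factors collapse via $\add\circ(\TimeB\otimes\start_{\star})$, which is the right-unit law in \eqref{eqn:TimeB:id}, leaving exactly $\eval_{T,\Omega}$ and hence matching the transpose of the left unitor.

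The hard part will not be any single step but the bookkeeping: repeatedly commuting $\eval_{T,\Omega}$ past tensor factors requires the interchange law, and keeping the associators and unitors of the symmetric monoidal structure coherent is what must be handled with care --- in particular, justifying that naturality of \eqref{eqn:closed_mnd:1} is what legitimately lets me compare the two diagrams after transposition rather than before. Once coherence is invoked to normalize the bracketings, both verifications reduce cleanly to the monoid axioms \eqref{eqn:TimeB:id} for $\TimeB$, and no property of $\Omega$ beyond its being an object of $\Context$ is used, completing the argument under Assumptions \ref{A:dyn}, \ref{A:monoidal} and \ref{A:closed} alone.
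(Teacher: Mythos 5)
Your proposal is correct and takes essentially the same route as the paper: both reduce the statement to Lemma \ref{lem:enrch:1}, characterize $\shift_{\Omega}$ through the adjunction identity obtained by transposing its definition (your key identity is precisely \eqref{eqn:idl3v:2}), and then verify the unit and associativity diagrams of \eqref{eqn:enrch_fnctr_time:2} by comparing left adjoints, reducing everything to the monoid axioms \eqref{eqn:TimeB:id} for $\TimeB$ together with bifunctoriality of $\otimes$ and unitor coherence. The difference is purely presentational --- the paper assembles these rewriting steps into pasting diagrams \eqref{eqn:proof:thm:1:1}--\eqref{eqn:proof:thm:1:4}, while you carry out the same three applications of the characterizing identity equationally.
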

	
	Theorem \ref{thm:1} is proved in Section \ref{sec:proof:thm:1}. Although the shift flow is constructed for a prefixed object $\Omega$ as in \eqref{eqn:def:shift}, Theorem \ref{thm:1} indicates that it preserves the structure present within $\Context$. The shift flow is also intrinsic to an object $\Omega$, and we shall explore the functorial nature of this relation later. Theorem \ref{thm:1} establishes the the shift flow $\shift$ to be an object of the following category --
	
	\begin{definition} [Enriched semigroup actions] \label{desc:dyn:4}
		The category of enriched semigroup actions $\EnrichSGActCat{\Context}$ based on the context category $\Context$ and time semigroup $\calT$ contains as objects enriched functors from $\Time$ to $\Context$, and enriched natural transformations as morphisms.
	\end{definition}
	
	In our case $\calX = \Time$ which has only one object. So in  diagram \eqref{eqn:def:enrich_nat:2} we have	
	\[\begin{split}
		&x=x' \\
		&\calY \paran{ Gx; Gx' } = \Endo(\Omega'), \\ 
		&\calY \paran{ Fx; Gx } = \calY \paran{ Fx; Gx' } = \calY \paran{ Fx'; Gx' } = \IntHom{\Omega}{\Omega'}, \\
		&\calY \paran{ Fx; Fx' } = \Endo(\Omega).
	\end{split}\]
	Consider two  objects $\tilde{\Phi}, \tilde{\Phi'}$ in $\EnrichSGActCat{\Context}$ with domains $\Omega, \Omega'$ respectively. An $\EnrichSGActCat{\Context}$-morphism between them is a morphism $h:\Omega\to \Omega'$ such that its right-adjoint $h^\sharp : 1_{\Context} \to \IntHom{\Omega}{\Omega'}$ satisfies :
	\begin{equation} \label{eqn:EnrichSGActCat:morphism}
		\begin{tikzcd}
			\TimeB \otimes 1_{\Context} \arrow{rr}{ \tilde{\Phi}'^\sharp \otimes h^\sharp } && \Endo(\Omega') \otimes \IntHom{\Omega}{\Omega'} \arrow{dr}{ \circ } \\
			\TimeB \arrow{d}{\cong}[swap]{ \lambda_{\TimeB}^{-1} } \arrow{u}{ \rho_{{\TimeB}}^{-1} }[swap]{\cong} && & \IntHom{\Omega}{\Omega'} \\
			1_{\Context} \otimes \TimeB \arrow{rr}{ h^\sharp \otimes \tilde{\Phi}^\sharp } && \IntHom{\Omega}{\Omega'} \otimes \Endo(\Omega) \arrow{ur}{ \circ }
		\end{tikzcd}
	\end{equation}
	In $\EnrichSGActCat{\Context}$, the composition of two morphisms $h: \tilde{\Phi} \to \tilde{\Phi'}$ and $h': \tilde{\Phi'} \to \tilde{\Phi''}$ is the dashed arrow created by composition below :
	\begin{equation} \label{eqn:EnrichSGActCat:compose}
		\begin{tikzcd}
			1_{\Context} \arrow[dashed, d, "h' \circ h)^\sharp"'] \arrow{r}{\lambda_{1_{\Context}}} & 1_{\Context} \otimes 1_{\Context} \arrow{d}{(h'^\sharp\otimes h^\sharp}  \\
			\IntHom{\Omega}{\Omega''} & \IntHom{\Omega'}{\Omega''} \otimes \IntHom{\Omega}{\Omega'} \arrow[l, "\circ"']
		\end{tikzcd}
	\end{equation}
	There are two other categories of dynamics we can build :
	
	\begin{definition} [Parameterized endomorphisms] \label{desc:dyn:5}
		The category of parameterized endomorphisms $\ParaEndoCat{\Context}$ based on the context category $\Context$ and time semigroup $\calT$ contains as objects morphisms $\Psi^\sharp : \TimeB \to \Endo \paran{ \Omega }$ which satisfy \eqref{eqn:enrch_fnctr_time:1}. 
	\end{definition}
	
	The interpretation in Definition \ref{desc:dyn:5} forfeits the algebraic property within the dynamics law and directly encodes the time dependence as a morphism within $\Context$. Thus the type of dependence on time, namely linear, continuous, measurable or differentiable, can be inferred directly from the context $\Context$. The morphisms and compositions within this category will be described in the upcoming Theorem \ref{thm:8}. They will be defined indirectly via the following well defined category :
	
	\begin{definition} [Category of left-actions] \label{desc:dyn:6}
		The category $\FlowCat{\Context}$ of flows or left-actions based on the context category $\Context$ and time semigroup $\calT$ contains as objects morphisms $\Psi : \TimeB \otimes \Omega \to \Omega$ which satisfy \eqref{eqn:enrch_fnctr_time:2}. A morphism between two objects $\paran{\Omega, \Psi}$ and $\paran{\Omega', \Psi'}$ corresponds to a morphism $h : \Omega \to \Omega'$ such that the following commutation holds :
		\begin{equation} \label{eqn:def:dyn:6}
			\begin{tikzcd}
				\TimeB \otimes \Omega \arrow{d}[swap]{ \TimeB \otimes h } \arrow{r}{\Psi} & \Omega \arrow{d}{h } \\
				\TimeB' \otimes \Omega' \arrow{r}{\Psi'} & \Omega'
			\end{tikzcd}
		\end{equation}
	\end{definition} 
	
	Similar to the interpretation in Definition \ref{desc:dyn:5}, Definition \ref{desc:dyn:6} also interprets the dynamics law as a morphism from time-space object $\TimeB \otimes \Omega$ into $\Omega$. Unlike the former, the latter description explicitly encodes the dependence of the dynamics on both the state as well as time objects.
	
	\begin{theorem} \label{thm:8}
		Suppose Assumptions \ref{A:dyn}, \ref{A:monoidal}, \ref{A:closed} and \ref{A:enrich_fnctr} hold. Then :
		\begin{enumerate} [(i)]
			\item The collection of objects and morphisms in $\EnrichSGActCat{\Context}$ from Definition \ref{desc:dyn:4}, and in $\FlowCat{\Context}$ from Definition \ref{desc:dyn:6} are categories.
			\item These two categories are isomorphic.
			\item There is a bijective correspondence between the objects of these categories and that of the collection $\ParaEndoCat{\Context}$ from Definition \ref{desc:dyn:5}.
			\item These three collections form three isomorphic categories as shown in \eqref{eqn:thm:8}, offering differing descriptions of dynamics in a context $\Context$.
		\end{enumerate}
		\begin{enumerate}[(i), resume]
			\item The correspondence between $\tilde{\Phi}$ and $\Phi^\sharp$ from \eqref{eqn:def:hatPhi}; and between $\Phi^\sharp$ and $\Phi$ from \eqref{eqn:def:tildePhi} are functorial.
		\end{enumerate}
	\end{theorem}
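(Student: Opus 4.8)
The plan is to anchor everything on $\FlowCat{\Context}$, whose morphisms are the most concrete: an arrow $\paran{\Omega, \Phi} \to \paran{\Omega', \Phi'}$ is an ordinary $\Context$-morphism $h : \Omega \to \Omega'$ subject to the equivariance square \eqref{eqn:def:dyn:6}. I would first dispatch part (i) for this category almost for free: the identity $\Id_\Omega$ trivially satisfies \eqref{eqn:def:dyn:6}, and horizontally pasting two equivariance squares shows the $\Context$-composite of two flow-morphisms is again equivariant, so associativity and unitality descend from $\Context$. The key structural observation to set up the rest is that morphisms of $\EnrichSGActCat{\Context}$ are \emph{also} underlying $\Context$-morphisms $h : \Omega \to \Omega'$ — encoded through their names $h^\sharp : 1_{\Context} \to \IntHom{\Omega}{\Omega'}$ — so that $\EnrichSGActCat{\Context}$ and $\FlowCat{\Context}$ differ only in the side-condition they impose on $h$, namely \eqref{eqn:EnrichSGActCat:morphism} versus \eqref{eqn:def:dyn:6}.

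For the object-level correspondences (part (iii) and the object half of part (iv)) I would simply invoke Lemma \ref{lem:enrch:1}. Its first clause gives a bijection between enriched functors $\tilde{\Phi}$ and their parametric forms $\Phi^\sharp : \TimeB \to \Endo(\Omega)$ obeying \eqref{eqn:enrch_fnctr_time:1}, i.e.\ between objects of $\EnrichSGActCat{\Context}$ and of $\ParaEndoCat{\Context}$; its second clause, together with the sharp--flat adjunction \eqref{eqn:closed_mnd:1}, matches those $\Phi^\sharp$ with the left actions $\Phi : \TimeB \otimes \Omega \to \Omega$ satisfying \eqref{eqn:enrch_fnctr_time:2}, i.e.\ the objects of $\FlowCat{\Context}$. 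Composing the two yields the three-way object bijection displayed in \eqref{eqn:thm:8}.

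The heart of the argument, and the step I expect to be the main obstacle, is part (ii): showing that the enriched-naturality constraint \eqref{eqn:EnrichSGActCat:morphism} on $h^\sharp$ is equivalent to the equivariance constraint \eqref{eqn:def:dyn:6} on $h$. Both composites in \eqref{eqn:EnrichSGActCat:morphism} name morphisms $\TimeB \to \IntHom{\Omega}{\Omega'}$: the top one is the name of the post-composition $\Phi'^t \circ h$ and the bottom one the name of $h \circ \Phi^t$. I would decurry each side by composing with $\eval_{\Omega, \Omega'}$ and unfolding the defining adjoint of the enriched composition $\circ$ (the double-evaluation morphism displayed just after \eqref{eqn:enrch_fnctr_time:1}), turning the top into $\Phi' \circ \paran{\TimeB \otimes h}$ and the bottom into $h \circ \Phi$ as morphisms $\TimeB \otimes \Omega \to \Omega'$; their equality is exactly \eqref{eqn:def:dyn:6}. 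Naturality of the hom-adjunction \eqref{eqn:closed_mnd:1} makes this decurrying a bijection, so the two side-conditions are interchangeable. The difficulty is purely in the bookkeeping: one must thread the enriched composition, the evaluation map, the symmetry $\swap$, and the unit isomorphisms $\lambda_{\TimeB}, \rho_{\TimeB}$ through the adjunction in the correct order, which is a coherence-heavy diagram chase rather than a conceptual hurdle.

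Once the equivalence of constraints is in hand, the functor $\Gamma : \EnrichSGActCat{\Context} \to \FlowCat{\Context}$ that is the identity on underlying $\Context$-morphisms and sends $\tilde{\Phi}$ to the left adjoint of its parametric form is a bijection on every hom-set and on objects. To see it respects composition I would note that the enriched composite \eqref{eqn:EnrichSGActCat:compose} is precisely the name of the $\Context$-composite $h' \circ h$, since naming is compatible with enriched composition; identities are preserved by $\start$-compatibility. By transport of structure along this identity-on-arrows bijection, $\EnrichSGActCat{\Context}$ inherits associativity and unitality from $\FlowCat{\Context}$, completing part (i) and establishing the isomorphism of part (ii). For part (iv) I would transport the category structure along the object bijection of part (iii) to endow $\ParaEndoCat{\Context}$ with morphisms and composition — this is forced, since Definition \ref{desc:dyn:5} specifies its morphisms only implicitly through this correspondence — yielding three isomorphic categories. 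Part (v) is then immediate: the two legs of the composite isomorphism, the \emph{parametric} correspondence $\tilde{\Phi} \leftrightarrow \Phi^\sharp$ and the sharp--flat correspondence $\Phi^\sharp \leftrightarrow \Phi$, each preserve composition and identities by construction, which is exactly the asserted functoriality.
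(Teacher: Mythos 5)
Your proposal is correct and follows essentially the same route as the paper: objects are matched via Lemma \ref{lem:enrch:1}, and the morphism-level isomorphism of part (ii) rests on showing that the two paths in \eqref{eqn:EnrichSGActCat:morphism} are precisely the names (curried forms) of $\Phi' \circ \paran{\TimeB \otimes h}$ and $h \circ \Phi$, so that the enriched-naturality constraint and the equivariance square \eqref{eqn:def:dyn:6} correspond under the adjunction \eqref{eqn:closed_mnd:1} — this is exactly the unnamed lemma in the paper's proof, and your use of \eqref{eqn:EnrichSGActCat:compose} to get preservation of composition matches as well. The only differences are cosmetic: you orient the functor from $\EnrichSGActCat{\Context}$ to $\FlowCat{\Context}$ rather than the reverse, and you spell out the routine verifications (pasting of equivariance squares, transport of structure onto $\ParaEndoCat{\Context}$) that the paper omits.
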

	
	Theorem \ref{thm:8} is proved in Section \ref{sec:proof:thm:8}. Each description of a dynamical system in Diagram \eqref{eqn:thm:8} differs in the nature of $\tilde{\Phi}$. Theorem \ref{thm:8} states that not only is there a one-to-one correspondence between these different $\tilde{\Phi}$s, but also between the morphisms which bind them within their own categories. Theorem \ref{thm:8} thus enables us to freely chose a particular categorical presentation of a dynamical system that suits our purpose. We next refine our understanding of the shift functor. 
	
	\begin{theorem} \label{thm:ijd9}
		Let the assumptions of Theorem \ref{thm:1} hold. Then the correspondence of $\Omega$ with $\shift_{\Omega}$ is the result of a functor
		\[\shift : \Context \to \FlowCat{\Context}.\]
		The action of this functor on morphisms is given by
		\begin{equation} \label{eqn:ijd9}
			\begin{tikzcd} \Omega \arrow[d, "f"] \\ \Omega' \end{tikzcd} 
			\begin{tikzcd} {} \arrow[rr, mapsto] && {} \end{tikzcd} 
			\begin{tikzcd}
				\TimeB \otimes \IntHom{\TimeB}{\Omega} \arrow[d, "\shift_{\Omega}"'] \arrow[rr, "\TimeB \otimes \IntHom{\TimeB}{f}"] && \TimeB \otimes \IntHom{\TimeB}{\Omega'} \arrow[d, "\shift_{\Omega'}"] \\
				\IntHom{\TimeB}{\Omega} \arrow[rr, "\IntHom{\TimeB}{f}"] && \IntHom{\TimeB}{\Omega'}
			\end{tikzcd}
		\end{equation}
	\end{theorem}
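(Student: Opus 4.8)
The plan is to define the prospective functor $\shift$ on objects and morphisms and then reduce the verification of the functor axioms to a single naturality square. On an object $\Omega$ of $\Context$, I set $\shift(\Omega) := \paran{ \PathF(\Omega), \shift_{\Omega} }$; that this pair is a legitimate object of $\FlowCat{\Context}$, i.e.\ that $\shift_{\Omega}$ is a left action, is precisely Theorem \ref{thm:1}. On a morphism $f : \Omega \to \Omega'$ I set $\shift(f) := \IntHom{\TimeB}{f}$, the image of $f$ under the internal-hom functor $\IntHom{\TimeB}{-}$. Because composition in $\FlowCat{\Context}$ is the underlying composition in $\Context$ (Definition \ref{desc:dyn:6}), the two functor axioms $\shift(\Id_{\Omega}) = \Id_{\PathF(\Omega)}$ and $\shift(g \circ f) = \shift(g) \circ \shift(f)$ reduce to the corresponding identities for the $\Context$-endofunctor $\IntHom{\TimeB}{-}$, which hold automatically. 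Everything therefore collapses to the one nontrivial claim: that $\IntHom{\TimeB}{f}$ really is a morphism of flows, i.e.\ that the square \eqref{eqn:ijd9} commutes.

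To prove the commutation of \eqref{eqn:ijd9}, I would transpose both legs across the tensor-hom adjunction \eqref{eqn:closed_mnd:1}. Both legs are morphisms $\TimeB \otimes \PathF(\Omega) \to \PathF(\Omega')$, and since $\PathF(\Omega') = \IntHom{\TimeB}{\Omega'}$, the adjunction identifies them with morphisms $\TimeB \otimes \TimeB \otimes \PathF(\Omega) \to \Omega'$; as the transpose bijection is injective, it suffices to show the two transposes agree. The single structural input I would use is the naturality of evaluation in its codomain argument, namely $\eval_{\TimeB, \Omega'} \circ \paran{ \TimeB \otimes \IntHom{\TimeB}{f} } = f \circ \eval_{\TimeB, \Omega}$, which is the defining property of $\IntHom{\TimeB}{f}$ (compare the evaluation morphism of \eqref{eqn:closed_mnd:3}). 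A consequence, used twice, is that the transpose of a post-composite $\IntHom{\TimeB}{f} \circ g$ is $f$ post-composed with the transpose of $g$.

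Carrying this out, the transpose of the down-then-across leg $\IntHom{\TimeB}{f} \circ \shift_{\Omega}$ is $f$ composed with the transpose of $\shift_{\Omega}$, and by the defining adjoint description of the shift in \eqref{eqn:def:shift} the transpose of $\shift_{\Omega}$ is $\eval_{\TimeB, \Omega} \circ \paran{ \add \otimes \IntHom{\TimeB}{\Omega} }$; this leg therefore transposes to $f \circ \eval_{\TimeB, \Omega} \circ \paran{ \add \otimes \IntHom{\TimeB}{\Omega} }$. For the across-then-down leg $\shift_{\Omega'} \circ \paran{ \TimeB \otimes \IntHom{\TimeB}{f} }$ I would unfold $\shift_{\Omega'}$ via \eqref{eqn:def:shift} to obtain $\eval_{\TimeB, \Omega'} \circ \paran{ \add \otimes \IntHom{\TimeB}{\Omega'} } \circ \paran{ \TimeB \otimes \TimeB \otimes \IntHom{\TimeB}{f} }$, then invoke the interchange law of $\otimes$ to slide $\IntHom{\TimeB}{f}$ past $\add$---they act on disjoint tensor factors---rewriting the middle as $\paran{ \TimeB \otimes \IntHom{\TimeB}{f} } \circ \paran{ \add \otimes \IntHom{\TimeB}{\Omega} }$, and finally apply naturality of $\eval$ to replace $\eval_{\TimeB, \Omega'} \circ \paran{ \TimeB \otimes \IntHom{\TimeB}{f} }$ by $f \circ \eval_{\TimeB, \Omega}$. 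The two transposes then coincide, so \eqref{eqn:ijd9} commutes.

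I expect the only real obstacle to be bookkeeping rather than conceptual: keeping the tensor factors and the successive transposes aligned, in particular threading the associativity coherence isomorphisms correctly when $\shift_{\Omega}$ and $\shift_{\Omega'}$ are unfolded inside composites, and confirming that the interchange step applies because $\add$ and $\IntHom{\TimeB}{f}$ genuinely occupy disjoint factors. Beyond this clerical care there is no deeper difficulty; the whole statement is a formal consequence of the naturality of $\eval$ together with the adjoint definition of the shift, so once the transposition is set up correctly the verification is forced.
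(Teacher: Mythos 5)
Your proposal is correct and follows essentially the same route as the paper: both arguments transpose the two legs of the square \eqref{eqn:ijd9} across the tensor-hom adjunction and verify that the resulting morphisms $\TimeB \otimes \TimeB \otimes \IntHom{\TimeB}{\Omega} \to \Omega'$ agree, using exactly the same three ingredients --- naturality of $\eval$, the adjoint description of $\shift$ (the paper's \eqref{eqn:idl3v:2}), and bifunctoriality of $\otimes$. Your additional remarks reducing the functor axioms to those of the endofunctor $\IntHom{\TimeB}{-}$ are a harmless (and welcome) completion of what the paper leaves implicit.
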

	Equation \eqref{eqn:ijd9} is proved in Section \ref{sec:proof:ijd9}. Our next important result is : 
	
	\begin{theorem} [Semigroup action of Transfer operator] \label{thm:2}
		Suppose Assumptions \ref{A:dyn}, \ref{A:monoidal}, \ref{A:closed} and \ref{A:enrich_fnctr} holds.     The morphism $\Transfer{\Phi}$ from \eqref{eqn:def:transfer} satisfies \eqref{eqn:enrch_fnctr_time:1}. Thus $\Transfer{\Phi}$ is the flow-form of an enriched functor $\Time \to \Context$ whose domain is the path space $\PathF(\Omega)$ corresponding to the domain $\Omega$ of the original functor $\tilde{\Phi}$ from Assumption \ref{A:dyn}.
	\end{theorem}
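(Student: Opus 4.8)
The plan is to verify that $\Transfer{\Phi}$, viewed as a morphism $\TimeB \otimes \IntHom{X}{\Omega} \to \IntHom{X}{\Omega}$, satisfies the flow-form axioms \eqref{eqn:enrch_fnctr_time:2}; by Lemma \ref{lem:enrch:1}(ii) this is equivalent to $\Transfer{\Phi}$ being the left adjoint of a parametric form, i.e. to $\Transfer{\Phi}^\sharp$ satisfying \eqref{eqn:enrch_fnctr_time:1}. I would run the argument for an arbitrary object $X$; the case $X = \TimeB$ then identifies the domain $\IntHom{X}{\Omega}$ with the path space $\PathF(\Omega) = \IntHom{\TimeB}{\Omega}$ and gives the stated conclusion. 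Since $\tilde{\Phi}$ is an enriched functor (Assumption \ref{A:enrich_fnctr}), its flow form $\Phi$ already satisfies \eqref{eqn:enrch_fnctr_time:2}, so the entire proof consists of transporting those two identities across the closed structure.

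First I would record the defining relation of the transfer operator in transposed form. By \eqref{eqn:def:transfer} and the adjunction \eqref{eqn:closed_mnd:1}, the adjunct $\eval_{X,\Omega} \circ \left( X \otimes \Transfer{\Phi} \right)$ of $\Transfer{\Phi}$ equals the composite
\[ \Phi \circ \left( \TimeB \otimes \eval_{X,\Omega} \right) \circ \left( \swap_{X,\TimeB} \otimes \IntHom{X}{\Omega} \right) : X \otimes \TimeB \otimes \IntHom{X}{\Omega} \to \Omega. \]
Intuitively this says that $\Transfer{\Phi}^t$ is post-composition by $\Phi^t$, so the two axioms ought to follow from the fact that post-composition respects the unit and the composition of the flow. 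To turn this into a proof I would verify each axiom of \eqref{eqn:enrch_fnctr_time:2} by transposing it across the adjunction $\Hom(X \otimes -;\Omega) \cong \Hom(-;\IntHom{X}{\Omega})$, so that it becomes an equality of morphisms into $\Omega$ which can be rewritten using the displayed relation and the corresponding axiom for $\Phi$.

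For the identity axiom, transposing $\Transfer{\Phi} \circ \left( \start_{\star} \otimes \IntHom{X}{\Omega} \right)$ and inserting the relation above reduces the claim to the left-identity triangle $\Phi \circ \left( \start_{\star} \otimes \Omega \right) = {}$left unitor of \eqref{eqn:enrch_fnctr_time:2} for $\Phi$, combined with the naturality of $\swap$ at $\start_{\star}$ and the monoidal coherence isomorphisms; this is essentially routine bookkeeping. The composition axiom is the main obstacle. There $\Transfer{\Phi}$ appears nested inside $\TimeB \otimes \Transfer{\Phi}$, so I would apply the transposed defining relation twice: once for the outer occurrence and once for the inner one, using the naturality of evaluation and the interchange law for $\otimes$ to slide the inner $\Transfer{\Phi}$ past the $\swap$ acting on the disjoint $X \otimes \TimeB$ factors. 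After both substitutions the left-hand side becomes $\Phi \circ \left( \TimeB \otimes \Phi \right)$ applied to the evaluated point, with outer time $t_1$ and inner time $t_2$ in that order, while the right-hand side, arising from $\add \otimes \IntHom{X}{\Omega}$, becomes $\Phi \circ \left( \add \otimes \Omega \right)$ with the same ordering. These are equated precisely by the associativity square of \eqref{eqn:enrch_fnctr_time:2} for $\Phi$; since the time arguments occur in a fixed order on both sides, no commutativity of $\Time$ is needed.

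The delicate part, and where I expect the real work to lie, is the coherent bookkeeping of the symmetries $\swap$ and the associators: the two time factors of $\TimeB \otimes \TimeB$ and the fixed source $X$ must be shuffled into the positions demanded by the defining composite of $\Transfer{\Phi}$ and by the $\Phi$-associativity square. I would organize this by drawing the full transposed diagram over $X \otimes \TimeB \otimes \TimeB \otimes \IntHom{X}{\Omega}$ and filling it with naturality squares for $\swap$, interchange squares for $\otimes$, and one instance each of the defining relation and the $\Phi$-associativity square, so that the symmetric monoidal coherence theorem disposes of every purely structural subdiagram and the only genuine inputs are the two flow axioms for $\Phi$. Once \eqref{eqn:enrch_fnctr_time:2} is established for $\Transfer{\Phi}$, Lemma \ref{lem:enrch:1} and Theorem \ref{thm:8} identify it with an enriched functor $\Time \to \Context$ on $\IntHom{X}{\Omega}$, and the specialization $X = \TimeB$ yields the path space $\PathF(\Omega)$.
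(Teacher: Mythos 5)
Your proposal is correct and follows essentially the same route as the paper's proof: the paper likewise invokes Lemma \ref{lem:enrch:1} to reduce the claim to the two flow axioms \eqref{eqn:enrch_fnctr_time:2} for $\Transfer{\Phi}$, transposes both axioms across the closed structure via Lemma \ref{lem:idl3v:1}, and then pastes together the transposed defining relation of $\Transfer{\Phi}$ (used twice for the composition law, cf.\ \eqref{eqn:proof:thm:2:4} and \eqref{eqn:proof:thm:2:5}), naturality of $\swap$ and $\eval$, and the unit and associativity squares of $\Phi$ into the large diagram \eqref{eqn:proof:thm:2:2} and its unit-law counterpart. The only cosmetic differences are that the paper works directly with $X = \TimeB$ (the path space $\PathF(\Omega)$) rather than an arbitrary $X$ followed by specialization, and your remark that no commutativity of $\Time$ is needed is consistent with the paper, which reserves that hypothesis for Theorem \ref{thm:6}.
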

	
	Theorem \ref{thm:2} is proved in Section \ref{sec:proof:thm:2}. Theorem \ref{thm:2} shows that the transfer operator is an object of the category $\ParaEndoCat{\Context}$. By Theorem \ref{thm:8}, the transfer operator can also be interpreted as a unique enriched functor from $\Time$  to $\Context$, as well as a unique left $\TimeB$-action on $\Context$.
	
	We have thus seen three different kinds of dynamical systems within a context category $\Context$ satisfying Assumptions \ref{A:monoidal}, \ref{A:closed} and \ref{A:enrich_fnctr}. The first is a general enriched functor from $\Time$. The second is the shift-flow on path spaces, and the third is a the transfer operator. The latter two are both secondary dynamical systems on path spaces. We shall see in the next section how these to leads to deeper insights into the original dynamics $\tilde{\Phi}$. Our categorification of the collection of enriched functors, parametric dynamics and flows will turn out to be useful.
	
	\section{Path-spaces as dynamics} \label{sec:path}
	
	We have so far seen multiple descriptions of a dynamical system - as a group action, a flow, and a parameterized endomorphism. Figure \ref{fig:dyn_interpret} summarizes how our ground assumptions make these descriptions interchangeable. Each of these descriptions come handy under different circumstances. In this section we shall look at a prime example.  Recall \eqref{eqn:def:transfer} in which the transfer operator morphism $\Transfer{\Phi}$ is derived as an adjoint.  Also recall the $\shift$ morphism from \eqref{eqn:def:shift}. One can obtain other adjoints of these morphisms :
	\[\begin{tikzcd} \TimeB \otimes \IntHom{\TimeB}{\Omega} \arrow{d}[dashed]{\shift} \\ \IntHom{\TimeB}{\Omega} \end{tikzcd}
	\begin{tikzcd} [scale cd = 0.7] {} \arrow[rr, leftrightarrow, "\text{adjoint}"] && {} \end{tikzcd} 
	\begin{tikzcd} \IntHom{\TimeB}{\Omega} \arrow{d}[dashed]{\shift^\flat} \\ \IntHom{ \TimeB }{ \IntHom{\TimeB}{\Omega} } \end{tikzcd} , \quad 
	\begin{tikzcd}  \TimeB \otimes \IntHom{ \TimeB }{\Omega} \arrow[d, "\tilde{ \Transfer{\Phi} }"] \\ \IntHom{\TimeB}{\Omega} \end{tikzcd}
	\begin{tikzcd} [scale cd = 0.7] {} \arrow[rr, leftrightarrow, "\text{adjoint}"] && {} \end{tikzcd}
	\begin{tikzcd}  \IntHom{\TimeB}{\Omega} \arrow[d, "\Transfer{\Phi}^\flat"] \\ \IntHom{ \TimeB }{ \IntHom{\TimeB}{\Omega} } \end{tikzcd} \]
	Although these two morphisms $\Transfer{\Phi}^\flat$ and $\shift^\flat$ are  between the same pair of objects, they describe two completely different types of dynamics. These morphisms come together in the following more general result.
	
	\paragraph{Equalizing dynamics} Let $X$ be an object in $\Context$ and $F, G$ be two pre-flows on $X$ as shown below on the left :
	\begin{equation} \label{eqn:eqlz_dyn:1}
		\begin{tikzcd} \TimeB \otimes X \arrow[d, "F"'] \\ X \end{tikzcd} ,\;
		\begin{tikzcd} \TimeB \otimes X \arrow[d, "G"'] \\ X \end{tikzcd} \imply
		\begin{tikzcd} X \arrow[d, "F^\flat"'] \\ \IntHom{\TimeB}{X} \end{tikzcd} ,\;
		\begin{tikzcd} X \arrow[d, "G^\flat"'] \\ \IntHom{\TimeB}{X} \end{tikzcd}
	\end{equation}
	The morphisms have their respective right adjoints, as shown above on the right. Construct their equalizer as follows :
	\begin{equation} \label{eqn:eqlz_dyn:2}
		\begin{tikzcd}
			& \IntHom{ \TimeB }{ X } \\ 
			\IntHom{ \TimeB }{ X } \arrow[ur, "=", bend left=20] \arrow[dr, "="', bend right=20] & E \arrow{r}{q} \arrow[l, dashed] & X \arrow[bend right=20]{ul}[swap]{ F^\flat } \arrow[bend left=20]{dl}{ G^\flat } \\
			& \IntHom{ \TimeB }{ X } 
		\end{tikzcd}
	\end{equation}
	We now have :
	
	\begin{theorem} \label{thm:eqlz_dyn}
		Suppose Assumptions \ref{A:monoidal}, \ref{A:closed} and \ref{A:enrich_fnctr} hold. Assume the morphisms and objects in \eqref{eqn:eqlz_dyn:1} and \eqref{eqn:eqlz_dyn:2}. Then there exists a unique morphism $F\cap G : \TimeB \otimes E  \to E$
		such that the following diagram commutes:
		\begin{equation} \label{eqn:eqlz_dyn:3}
			\begin{tikzcd} 
				\TimeB\otimes X \arrow[d, "F"] &&& \TimeB\otimes E \arrow{d}{F\cap G} \arrow[rrr, "\TimeB\otimes q"] \arrow[lll, "\TimeB\otimes q"'] &&& \TimeB\otimes X \arrow[d, "G"'] \\
				X &&& E \arrow[rrr, "q"] \arrow[lll, "q"'] &&& X
			\end{tikzcd}
		\end{equation}
		Moreover, if $F,G$ are left-actions of $\Time$, then so is $F\cap G$.
	\end{theorem}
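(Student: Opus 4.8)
The plan is to push everything through the closed-monoidal adjunction \eqref{eqn:closed_mnd:1} and then realize $F\cap G$ as a factorization through the equalizer arrow $q$. First I would record that, since $E$ is the equalizer of the flat adjoints $F^\flat,G^\flat : X \to \IntHom{\TimeB}{X}$, we have $F^\flat\circ q = G^\flat\circ q$. The bijection \eqref{eqn:closed_mnd:1} is natural in its source variable, and precomposition by $q : E \to X$ sends the adjoint pair $(F, F^\flat)$ to the pair $(F\circ(\TimeB\otimes q),\,F^\flat\circ q)$; likewise for $G$. Thus $F\circ(\TimeB\otimes q)$ and $G\circ(\TimeB\otimes q)$ have equal adjoints $F^\flat\circ q = G^\flat\circ q$, so by bijectivity they are equal. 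Denote this common morphism $H := F\circ(\TimeB\otimes q) = G\circ(\TimeB\otimes q) : \TimeB\otimes E \to X$. This already forces the two outer composites of \eqref{eqn:eqlz_dyn:3} to agree.

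Next I would build $F\cap G$ as the unique factorization of $H$ through $q$. By the universal property of the equalizer, $H$ factors (uniquely) through $q$ exactly when $F^\flat\circ H = G^\flat\circ H$, i.e. when $E$ is forward-invariant. This is the main obstacle, and it is precisely the step that consumes the algebra. To verify it I pass once more to adjoints: by naturality of \eqref{eqn:closed_mnd:1} in the domain, the mate of $F^\flat\circ H$ is $F\circ(\TimeB\otimes H)$. Substituting $H = F\circ(\TimeB\otimes q)$ and invoking the associativity square of the left-action axiom \eqref{eqn:enrch_fnctr_time:2} for $F$ (which rewrites $F\circ(\TimeB\otimes F)$ as $F\circ(\add\otimes X)$) collapses this, up to the associator, to $H\circ(\add\otimes E)$. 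Running the identical computation with the presentation $H = G\circ(\TimeB\otimes q)$ and the associativity of $G$ yields the very same morphism $H\circ(\add\otimes E)$. Hence the mates coincide, so $F^\flat\circ H = G^\flat\circ H$, and the equalizer delivers a unique $F\cap G : \TimeB\otimes E \to E$ with $q\circ(F\cap G) = H = F\circ(\TimeB\otimes q) = G\circ(\TimeB\otimes q)$, which is exactly the commutativity of \eqref{eqn:eqlz_dyn:3}. Uniqueness of the morphism making that diagram commute is then free, since every equalizer arrow $q$ is a monomorphism and $q\circ(F\cap G)$ is pinned down by $H$.

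Finally, for the \emph{moreover} clause I would verify the left-action axioms \eqref{eqn:enrch_fnctr_time:2} for $F\cap G$ by transporting them across the monic $q$. Both axioms are equalities of morphisms into $E$, so it suffices to check them after postcomposition with $q$, and there the defining relation $q\circ(F\cap G) = F\circ(\TimeB\otimes q)$ lets me trade each occurrence of $F\cap G$ for $F$. The unit axiom then reduces to the unit axiom of $F$ postcomposed with $\TimeB\otimes q$; the associativity axiom reduces, after one further substitution of $q\circ(F\cap G) = F\circ(\TimeB\otimes q)$ to fold the inner copy back in, to the associativity of $F$ — both identities already in hand. I expect the whole argument to be formal once forward-invariance is secured, so the genuine content lies in establishing $F^\flat\circ H = G^\flat\circ H$ through the two associativity squares; this is also the point where the left-action hypothesis is indispensable, since for non-associative pre-flows $E$ need not be forward-invariant and the factorization can fail.
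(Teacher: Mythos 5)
Your proof is correct and takes essentially the same route as the paper's: you first show $F\circ(\TimeB\otimes q)=G\circ(\TimeB\otimes q)=:H$ from the equalizer relation and the adjunction, then prove the fork $F^\flat\circ H=G^\flat\circ H$ by passing to mates and using the associativity of $F$ and $G$, obtain $F\cap G$ from the universal property of the equalizer, and verify the left-action axioms by postcomposing with the monic $q$ --- exactly the paper's steps \eqref{eqn:pxdi0}, \eqref{eqn:sdlrx6}, and its concluding diagrams. Your closing remark is also on target: although the theorem is stated for pre-flows, the paper's own proof of \eqref{eqn:sdlrx6} uses the same associativity squares you invoke, so the left-action hypothesis is indeed consumed already at the existence step.
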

	
	Theorem \ref{thm:eqlz_dyn} is proved in Section \ref{sec:proof:eqlz_dyn}.
	The morphism $F\cap G$ so constructed is an equalizer of the dynamics under the pre-flows $F,G$ respectively. Any pre-flow has an alternate and equivalent description as a morphism from the state-space $X$ into the path-space $\IntHom{\TimeB}{X}$. The pre-flow $F\cap G$ has as its domain that subspace of $X$ which generates the same paths under both $F$ and $G$. The pre-flow $F\cap G$ thus encodes the commonality between $F$ and $G$, in terms of the paths they produce from each state.
	
	The most interesting application of Theorem \ref{thm:eqlz_dyn} is to take :
	\[X = \IntHom{\TimeB}{\Omega}, \quad F = \shift_{\Omega}, \quad G = \Transfer{\Phi}.\]
	We are interested in the morphism shown below on the left
	\begin{equation} \label{eqn:thm:3:1}
		\begin{tikzcd} E_{\Phi} \arrow{d}[swap]{q_{\Phi}} \\ \IntHom{ \TimeB }{ \Omega } \end{tikzcd}
		\;:=\; 
		\begin{tikzcd}
			& \IntHom{ \TimeB }{ \IntHom{\TimeB}{\Omega} } \\ 
			\IntHom{ \TimeB }{ \IntHom{\TimeB}{\Omega} } \arrow[ur, "=", bend left=20] \arrow[dr, "="', bend right=20] & E_{\Phi} \arrow{r}{q_{\Phi}} \arrow[l, dashed] & \IntHom{ \TimeB }{ \Omega } \arrow[bend right=20]{ul}[swap]{ \Transfer{\Phi}^\flat } \arrow[bend left=20]{dl}{ \shift^\flat } \\
			& \IntHom{ \TimeB }{ \IntHom{\TimeB}{\Omega} } 
		\end{tikzcd}
	\end{equation}
	which is an equalizer of $\Transfer{\Phi}^\flat$ and $\shift^\flat$. This equalizer action is shown on the right above. An equalizer is always injective. This makes $E_{\Phi}$ a subobject of the path space $\PathF (\Omega) = \IntHom{ \TimeB }{ \Omega }$.

	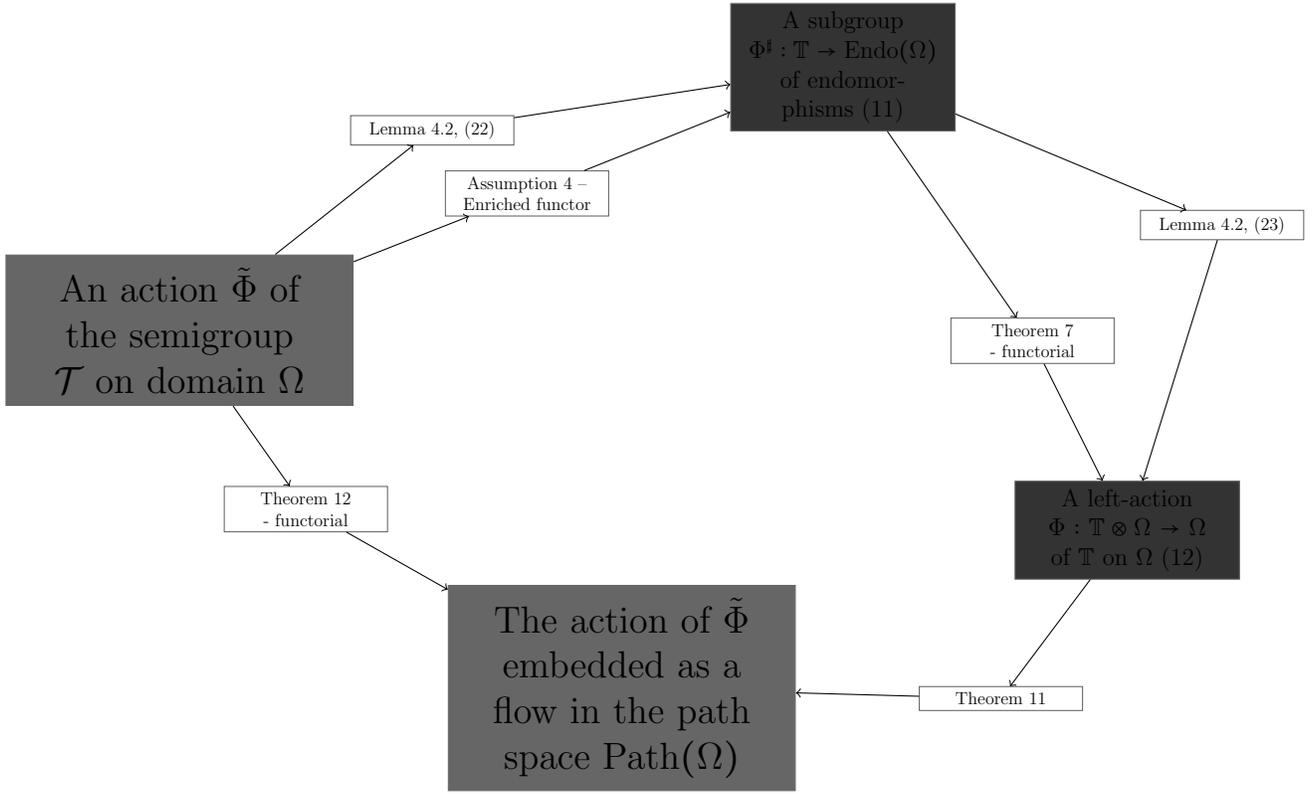
\begin{figure} [!t]
		\center
		\begin{tikzpicture}[scale=0.7, transform shape]
			\node [style={rect4}, scale=1.7] (n1) at (-0.3\columnA, 0.5\rowA) { An action $\tilde{\Phi}$ of the semigroup $\Time$ on domain $\Omega$ };
			\node [style={rect2}, scale=0.8] (n2) at (0.5\columnA, 2.4\rowA) { Lemma \ref{lem:enrch:1}, \eqref{eqn:enrch_fnctr_time:1}  };
			\node [style={rect2}, scale=0.8] (n8) at (0.8\columnA, 1.8\rowA) { Assumption \ref{A:enrich_fnctr} -- Enriched functor  };
			\node [style={rect3}, scale=1.1] (n3) at (1.8\columnA, 3\rowA) { A subgroup $\Phi^\sharp : \TimeB \to \Endo( \Omega )$ of endomorphisms \eqref{eqn:def:hatPhi}};
			\node [style={rect2}, scale=0.8] (n4) at (3\columnA, 1.5\rowA) { Lemma \ref{lem:enrch:1}, \eqref{eqn:enrch_fnctr_time:2}  };
			\node [style={rect2}, scale=0.8] (n9) at (2.4\columnA, 0.4\rowA) { Theorem \ref{thm:8} - functorial };
			\node [style={rect3}, scale=1.1] (n5) at (2.7\columnA, -1.4\rowA) { A left-action $\Phi : \TimeB \otimes \Omega \to \Omega$ of $\TimeB$ on $\Omega$ \eqref{eqn:def:tildePhi} };
			\node [style={rect2}, scale=0.8] (n6) at (2.3\columnA, -3\rowA) { Theorem \ref{thm:3} };
			\node [style={rect2}, scale=0.8] (n10) at (0.1\columnA, -1.2\rowA) { Theorem \ref{thm:6} - functorial};
			\node [style={rect4}, scale=1.7] (n7) at (1.1\columnA, -2.9\rowA) { The action of $\tilde{\Phi}$ embedded as a flow in the path space $\PathF (\Omega)$ };
			\draw[-to] (n1) to (n2);
			\draw[-to] (n2) to (n3);
			\draw[-to] (n1) to (n8);
			\draw[-to] (n8) to (n3);
			\draw[-to] (n3) to (n4);
			\draw[-to] (n4) to (n5);
			\draw[-to] (n3) to (n9);
			\draw[-to] (n9) to (n5);
			\draw[-to] (n5) to (n6);
			\draw[-to] (n6) to (n7);
			\draw[-to] (n1) to (n10);
			\draw[-to] (n10) to (n7);
		\end{tikzpicture}
		\caption{The subshift functor. As shown in Figure \ref{fig:dyn_interpret}, the most basic description of a  dynamical system is as the action of a semigroup $\Time$ on a domain $\Omega$.  In the usual contexts of set-theoretic or topological dynamics, it is well known that each state $\omega\in \Omega$ leads to a trajectory under the dynamics. In a general category-$\Context$ these trajectories or paths together form a new space $\PathF(\Omega)$ called the \emph{path-space}, which is again a member of category-$\Context$. Each trajectory is equipped with a natural motion along it. These motions collectively create a dynamics on $\PathF(\Omega)$. Thus overall one derives a new dynamical system from the existing dynamics. We describe this construction for a general category $\Context$, which generalizes the set-theoretic and topological cases. We also show that this construction is functorial. }
		\label{fig:subshift_functor}
	\end{figure} 
	
	\begin{theorem}[Subshift representation of a dynamical system] \label{thm:3}
		Suppose Assumptions \ref{A:dyn}, \ref{A:monoidal} and \ref{A:closed} hold, and $\Context$ has equalizers. Assume the notation of \eqref{eqn:thm:3:1}. Then
		\begin{enumerate} [(i)]
			\item there is a equalizer 
			\[q : E_{\Phi} \to \PathF(\Omega)\]
			of the morphisms $\shift_{\Omega}^\sharp$, $\Transfer{\Phi}^\sharp$ from $\PathF(\Omega)$ to $\PathF \paran{ \PathF(\Omega) }$.
			\item $E_{\Phi}$ is a subobject of $\PathF(\Omega)$.
			\item There exists a unique morphism 
			\begin{equation} \label{eqn:thm:3:2}
				\shiftF{\Phi} : \TimeB \otimes E_{\Phi}  \to E_{\Phi} ,
			\end{equation}
			such that the following diagram commutes:
			\begin{equation} \label{eqn:thm:3:3}
				\begin{tikzcd}
					E_{\Phi} \arrow{d}{q} && \TimeB \otimes E_{\Phi} \arrow{d}[swap]{\TimeB \otimes q} \arrow{rr}{ \shiftF{\Phi} } \arrow{ll}[swap]{ \shiftF{\Phi} } && E_{\Phi} \arrow{d}{q} \\
					\IntHom{ \TimeB }{ \Omega } && \TimeB \otimes \IntHom{ \TimeB }{ \Omega } \arrow{ll}[swap]{ \tilde{\Transfer{\Phi}} } \arrow{rr}{ \shift } && \IntHom{ \TimeB }{ \Omega }
				\end{tikzcd}
			\end{equation}
			\item The morphism $\shiftF{\Phi}$ in \eqref{eqn:thm:3:2} is a left action of $\TimeB$.
			\item Suppose $f:\Phi \to \Psi$ is a morphism in the category of left $\TimeB$-actions (Definition \ref{desc:dyn:6}). Then there is a unique morphism $E(f):E_{\Phi} \to E_{\Psi}$ such that
			\begin{equation} \label{eqn:thm:3:4}
				\begin{tikzcd}
					E_{\Phi} \arrow{d}[swap]{q} \arrow{rr}{E(f)} && E_{\Psi} \arrow{d}{q'} \\
					\PathF(\Omega) \arrow{rr}[swap]{ \IntHom{\TimeB}{f} } && \PathF(\Omega')
				\end{tikzcd} , \quad 
				\begin{tikzcd}
					\TimeB \otimes E_{\Phi} \arrow{rr}{\TimeB \otimes E(f)} \arrow{d}[swap]{\shift_\Phi} && \TimeB \otimes E_{\Psi} \arrow{d}{\shift_\Psi} \\
					E_{\Phi} \arrow{rr}{E(f)} && E_{\Psi}
				\end{tikzcd}
			\end{equation}
			where $q$ and $q'$ are universal morphisms of the equalizer.
		\end{enumerate}
	\end{theorem}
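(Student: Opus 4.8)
My plan is to dispatch (i)--(iv) as instances of results already available and to concentrate the real effort on the functoriality clause (v). For (i), both flat adjoints $\shift^\flat, \Transfer{\Phi}^\flat : \PathF(\Omega) \to \PathF(\PathF(\Omega))$ are legitimate $\Context$-morphisms because $\Context$ is closed (Assumption \ref{A:closed}) and the shift and transfer pre-flows exist on $\PathF(\Omega)$; since we assume $\Context$ has equalizers, their equalizer $q : E_\Phi \to \PathF(\Omega)$, displayed in \eqref{eqn:thm:3:1}, exists. Claim (ii) is then automatic, as every equalizer is a monomorphism, so $E_\Phi$ is a subobject of $\PathF(\Omega)$.

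Parts (iii) and (iv) are a direct application of Theorem \ref{thm:eqlz_dyn} with $X = \PathF(\Omega)$, $F = \shift_\Omega$ and $G = \Transfer{\Phi}$. The equalizer of $F^\flat$ and $G^\flat$ is precisely $E_\Phi$, so the theorem supplies the unique morphism $\shiftF{\Phi} := F \cap G : \TimeB \otimes E_\Phi \to E_\Phi$ making \eqref{eqn:thm:3:3} commute, which is (iii). For (iv), the closing clause of Theorem \ref{thm:eqlz_dyn} asserts that $F \cap G$ is a left action whenever $F$ and $G$ are; here $\shift_\Omega$ is a left action by Theorem \ref{thm:1} and $\Transfer{\Phi}$ is a left action by Theorem \ref{thm:2}, so $\shiftF{\Phi}$ inherits this property.

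The heart of the argument is (v). First I would record two naturality squares expressing that the post-composition morphism $\IntHom{\TimeB}{f}$ intertwines the secondary dynamics. The shift square $\IntHom{\TimeB}{f} \circ \shift_\Omega = \shift_{\Omega'} \circ (\TimeB \otimes \IntHom{\TimeB}{f})$ is exactly \eqref{eqn:ijd9} of Theorem \ref{thm:ijd9}. The transfer square $\IntHom{\TimeB}{f} \circ \Transfer{\Phi} = \Transfer{\Psi} \circ (\TimeB \otimes \IntHom{\TimeB}{f})$ I would derive by unfolding the defining composite \eqref{eqn:def:transfer}, invoking the naturality of evaluation $f \circ \eval_{\TimeB, \Omega} = \eval_{\TimeB, \Omega'} \circ (\TimeB \otimes \IntHom{\TimeB}{f})$ together with the hypothesis that $f$ intertwines the flows, $f \circ \Phi = \Psi \circ (\TimeB \otimes f)$, which is \eqref{eqn:def:dyn:6}. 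Transposing both squares through the tensor--hom adjunction turns them into $\shift_{\Omega'}^\flat \circ \IntHom{\TimeB}{f} = \IntHom{\TimeB}{\IntHom{\TimeB}{f}} \circ \shift_\Omega^\flat$ and the matching identity for the transfer adjoint. Composing with $q$ and using that $q$ equalizes $\shift_\Omega^\flat$ and $\Transfer{\Phi}^\flat$ shows that $\IntHom{\TimeB}{f} \circ q$ equalizes $\shift_{\Omega'}^\flat$ and $\Transfer{\Psi}^\flat$; the universal property of the equalizer $q'$ then yields a unique $E(f) : E_\Phi \to E_\Psi$ with $q' \circ E(f) = \IntHom{\TimeB}{f} \circ q$, which is the left square of \eqref{eqn:thm:3:4}.

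Finally I would verify the right square of \eqref{eqn:thm:3:4}, that $E(f)$ intertwines $\shiftF{\Phi}$ and $\shiftF{\Psi}$. Since $q'$ is monic it suffices to check the identity after post-composing with $q'$. Chasing through the commuting squares \eqref{eqn:thm:3:3} for $\Phi$ and for $\Psi$ and the relation $q' \circ E(f) = \IntHom{\TimeB}{f} \circ q$, both $q' \circ E(f) \circ \shiftF{\Phi}$ and $q' \circ \shiftF{\Psi} \circ (\TimeB \otimes E(f))$ reduce to $\IntHom{\TimeB}{f} \circ \shift_\Omega \circ (\TimeB \otimes q)$ once the shift naturality square is applied; cancelling the monomorphism $q'$ gives the result, and uniqueness of $E(f)$ makes the assignment functorial. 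The step I expect to be the main obstacle is the transfer-naturality square: unlike its shift counterpart it is not handed to us by an earlier theorem and must be proved by a genuine diagram chase through \eqref{eqn:def:transfer}, carefully tracking the symmetry $\swap$ and the evaluation morphisms. Everything else is bookkeeping with adjunctions and universal properties.
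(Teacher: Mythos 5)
Your proposal is correct and follows essentially the same route as the paper: claims (i)--(iv) are dispatched via Theorem \ref{thm:eqlz_dyn} (with $X = \PathF(\Omega)$, $F = \shift_{\Omega}$, $G = \Transfer{\Phi}$), and claim (v) is obtained by combining the shift-naturality square of Theorem \ref{thm:ijd9} with a transfer-naturality square, then appealing to the universal property of the equalizer for both squares of \eqref{eqn:thm:3:4}. The step you flag as the main obstacle --- the transfer-naturality square --- is precisely the lemma the paper isolates (its Lemma \ref{lem:iuhd9}) and proves by chasing left adjoints through \eqref{eqn:def:transfer} using Lemma \ref{lem:idl3v:1} and bifunctoriality, exactly as you anticipate.
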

	
	Theorem \ref{thm:3} is proved in Section \ref{sec:proof:thm:3}. Every trajectory of $\tilde{\Phi}$ is a point in the space $\PathF(\Omega)$. However not every point in $\PathF(\Omega)$ is a trajectory of $\tilde{\Phi}$. Theorem \ref{thm:3} explicitly reconstructs the dynamics of $\tilde{\Phi}$'s orbit space as an embedded dynamics within $\PathF(\Omega)$. This essence of Theorem \ref{thm:6} was distilled earlier in Corollary \ref{corr:subshift}. The flow $\shiftF{\Phi}$ from \eqref{eqn:thm:3:2} thus leads to a semigroup action
	\begin{equation} \label{eqn:thm:3:5}
		\shiftF{\Phi} : \Time \to \Endo \paran{ E_{\Phi} }.
	\end{equation}
	The surprising revelation of Theorem \ref{thm:3} that goes beyond Theorem \ref{thm:eqlz_dyn} is that the correspondence of a flow with its subshift is also functorial. In other words the  operation $E$ is an endofunctor on the category of left actions. This functor will be revealed to be even more informative. We next show that the semigroup action in \eqref{eqn:thm:3:4} is an isomorphic copy of the semigroup action of $\tilde{\Phi}$. 	
	
	\begin{theorem} \label{thm:6}
		Suppose Assumptions \ref{A:dyn}, \ref{A:monoidal}, \ref{A:closed} and \ref{A:enrich_fnctr} hold, and additionally, and $\TimeB$ be a commutative monoid. Let  $\Phi^\flat : \Omega \to \IntHom{ \TimeB }{ \Omega }$ be the adjoint of a $\TimeB$-action $\Phi : \TimeB \otimes \Omega \to \Omega$. Then $\Phi^\flat$ coincides with the equalizer $q$ from \eqref{eqn:thm:3:1}. Consequently the subshift domain $E_{\Phi}$ is isomorphic to $\Omega$.    
	\end{theorem}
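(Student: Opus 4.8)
The plan is to show that the orbit map $\Phi^\flat$ literally \emph{is} the equalizer $q$, by verifying the two defining features of an equalizer and then exhibiting an explicit two-sided inverse. Since $\Context$ carries no points, I argue entirely through adjunctions rather than elementwise.

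First I would record an \emph{evaluation-at-the-identity} retraction. Let $\epsilon : \PathF(\Omega) \to \Omega$ be the morphism obtained by precomposing $\eval_{\TimeB,\Omega} \circ \paran{ \start_\star \otimes \IntHom{\TimeB}{\Omega} }$ with the inverse left unitor $\IntHom{\TimeB}{\Omega} \cong 1_{\Context} \otimes \IntHom{\TimeB}{\Omega}$; intuitively $\epsilon$ evaluates a path at the monoid identity. Unwinding the adjunction that defines $\Phi^\flat$ (so that $\Phi = \eval_{\TimeB,\Omega} \circ \paran{ \TimeB \otimes \Phi^\flat }$) and applying the left-identity axiom of \eqref{eqn:enrch_fnctr_time:2} gives $\epsilon \circ \Phi^\flat = \Id_{\Omega}$, so $\Phi^\flat$ is a split monomorphism. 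This is the categorical incarnation of the fact that the initial value of an orbit recovers its base point.

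Next I would verify that $\Phi^\flat$ equalizes $\shift^\flat$ and $\Transfer{\Phi}^\flat$. Transposing across the currying isomorphism $\IntHom{\TimeB}{\IntHom{\TimeB}{\Omega}} \cong \IntHom{\TimeB \otimes \TimeB}{\Omega}$, it suffices to compare two morphisms $\TimeB \otimes \TimeB \otimes \Omega \to \Omega$. Expanding the definitions \eqref{eqn:def:shift} and \eqref{eqn:def:transfer} together with the counit relation for $\Phi^\flat$, the composite $\shift^\flat \circ \Phi^\flat$ reduces to $\Phi \circ \paran{ \add \otimes \Omega }$, whereas the composite $\Transfer{\Phi}^\flat \circ \Phi^\flat$ reduces, after one use of the action-associativity axiom of \eqref{eqn:enrch_fnctr_time:2} that fuses the two copies of $\Phi$, to $\Phi \circ \paran{ \add \otimes \Omega } \circ \paran{ \swap_{\TimeB,\TimeB} \otimes \Omega }$. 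These agree precisely because the commutativity of the monoid $\TimeB$ is the identity $\add \circ \swap_{\TimeB,\TimeB} = \add$. This is where the commutativity hypothesis is indispensable, and it is the main obstacle: every other step is formal, but here the nested adjoints of $\shift$ and $\Transfer{\Phi}$ must be unwound and matched term by term against the monoid and action axioms. Having established the equalizing identity, the universal property of the equalizer $q$ supplies a unique $u : \Omega \to E_{\Phi}$ with $q \circ u = \Phi^\flat$.

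Finally I would show $u$ is an isomorphism by producing its inverse $v := \epsilon \circ q$. Specializing the defining relation $\shift^\flat \circ q = \Transfer{\Phi}^\flat \circ q$ of the equalizer along its inner time factor, i.e. precomposing that factor with $\start_\star$ (the categorical analogue of setting the evaluation time to the identity), yields $q = \Phi^\flat \circ \epsilon \circ q = \Phi^\flat \circ v$, the assertion that a point of $E_{\Phi}$ is the orbit of its own initial value. Then $v \circ u = \epsilon \circ q \circ u = \epsilon \circ \Phi^\flat = \Id_{\Omega}$ by the retraction, while $q \circ \paran{ u \circ v } = \Phi^\flat \circ v = q = q \circ \Id_{E_{\Phi}}$ forces $u \circ v = \Id_{E_{\Phi}}$ because $q$, being an equalizer, is monic. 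Hence $u$ is an isomorphism; the relation $\Phi^\flat = q \circ u$ identifies $\Phi^\flat$ with $q$ as subobjects of $\PathF(\Omega)$, and consequently $E_{\Phi} \cong \Omega$, as claimed.
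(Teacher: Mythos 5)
Your proposal is correct and its essential content coincides with the paper's proof: both establish that $\Phi^\flat$ equalizes $\shift^\flat$ and $\Transfer{\Phi}^\flat$ by passing to double (un-curried) adjoints $\TimeB \otimes \TimeB \otimes \Omega \to \Omega$ and invoking commutativity of $\add$ exactly as in \eqref{eqn:thm:6:2}--\eqref{eqn:thm:6:3}, and both obtain the factorization of an equalizing morphism through $\Phi^\flat$ by evaluating at the monoid identity (the paper's $\tilde{\alpha}$ is precisely your $\epsilon \circ \alpha$, and its unit-law computation is your retraction $\epsilon \circ \Phi^\flat = \Id_\Omega$). The only difference is in how the conclusion is assembled: the paper verifies the universal property of $\Phi^\flat$ directly for an arbitrary test morphism $\alpha$, whereas you specialize the same computation to $\alpha = q$ and close the argument with the comparison maps $u$, $v$ and monicity of $q$ --- a logically equivalent route given that the equalizer $q$ of \eqref{eqn:thm:3:1} is assumed to exist.
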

	
	Theorem \ref{thm:6} is proved in Section \ref{sec:proof:thm:6}. One of the major revelations of Theorem \ref{thm:6} was announced earlier in Corollary \ref{corr:dom_subshift}. A crucial property of the time-0 evaluation morphism \eqref{eqn:def:path_eval:1} is as a left inverse to the flow :
	\begin{equation} \label{eqn:def:path_eval:3}
		\begin{tikzcd}
			\Omega \arrow[dr, "\Phi^\flat"'] \arrow[rr, "="] & & \Omega \\
			& \IntHom{\TimeB}{\Omega} \arrow[ur, "\eval_0"']
		\end{tikzcd}
	\end{equation}
	This commutation is a direct consequence of the unital identity \eqref{eqn:enrch_fnctr_time:2} of the flow $\Phi$. Theorem \ref{thm:3}, Theorem \ref{thm:6} and \eqref{eqn:def:path_eval:3} combine to give
	\[\begin{tikzcd}
		\TimeB \otimes E_{\Phi} \arrow[dr, "\cong"] \arrow[dd, "\TimeB \otimes q"'] \arrow[rrr, "\shiftF{\Phi}"] & & & E_{\Phi} \arrow[dl, "\cong"] \arrow[dd, "q"] \arrow[r, "\cong"] & \Omega \\
		& \TimeB \otimes \Omega \arrow[dl, "\TimeB \otimes \Phi^\flat"] \arrow[r, dotted] & \Omega \arrow[dr, "\Phi^\flat"] \\
		\TimeB \otimes \IntHom{\TimeB}{\Omega} \arrow[rrr, "\shiftF{\Omega}"] & & & \IntHom{\TimeB}{\Omega} \arrow[uur, bend right=10, "\eval_0"']
	\end{tikzcd}\]
	This indicates that irrespective of $\tilde{\Phi}$ and its nature as a flow or semi-flow, the subshift representation retains an isomorphic copy of $\Omega$ at each time instant.
	
	This completes our investigation of the theoretical outcomes of the four ground assumptions - Assumptions \ref{A:dyn}, \ref{A:monoidal}, \ref{A:closed} and \ref{A:enrich_fnctr}. These structural assumptions enable not only the primary description of a dynamical system as a semigroup action, but also provides additional structure $\TimeB$ to the time semigroup $\Time$, provides three other equivalent descriptions of mathematics each capturing a different role, enables the notions of paths, path-space and subshifts. We next examine how it also allows the notion of states or points and state-preservation to be incorporated within the categorical framework.	
	
	\section{States of a dynamics functor} \label{sec:states}
	
	Dynamical systems theory is primarily interested in the asymptotic properties of trajectories, and on invariant points, subsets and measures. All these notions are built upon the primitive notions of states and invariance. In this section we examine how the notion of states arise naturally in the enriched category framework. An important assumption that enables a complete discussion of states is
	
	\begin{Assumption} \label{A:trmnl}
		The unit element $1_{\Context}$ is terminal.
	\end{Assumption}
	
	Recall that being \emph{terminal} means that for every object $x\in \Context$, there is a unique morphism from $x$ to $1_{\Context}$. While Assumption \ref{A:trmnl} is required for general monoidal categories, it automatically holds true for Cartesian categories, as implied by the following lemma.
	
	\begin{lemma}
		Let $\calC$ be a Cartesian closed category. Then an object is the unit object iff it is the terminal object.
	\end{lemma}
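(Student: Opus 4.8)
The plan is to prove both implications of the biconditional separately, relying on the universal properties that characterize the unit object and the terminal object in a Cartesian closed category. Recall that in a Cartesian category the monoidal product $\otimes$ is the categorical product $\times$, so the unit object $1_{\Context}$ is the empty product, i.e.\ the product of no objects. The key structural fact I would use is that in any Cartesian monoidal category the left and right unitors give natural isomorphisms $1_{\Context} \times x \cong x \cong x \times 1_{\Context}$.

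First I would show that the terminal object $t$ is a unit object. For this, observe that for any object $x$, the product $x \times t$ comes equipped with projections to $x$ and to $t$; since $t$ is terminal, the unique morphism $x \to t$ together with $\Id_x$ induces, via the universal property of the product, a morphism $x \to x \times t$ that is inverse to the projection $x \times t \to x$. This exhibits a natural isomorphism $x \times t \cong x$, and symmetrically $t \times x \cong x$. Hence $t$ satisfies the defining property of a unit object for the monoidal structure $\times$, and by uniqueness of the unit (up to unique isomorphism) we conclude $t \cong 1_{\Context}$.

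Conversely, to show the unit object $1_{\Context}$ is terminal, I would use the closed structure together with equation \eqref{eqn:closed_mnd:2}. For any object $x$, the natural bijection $\Hom_{\Context}(x; z) \cong \Hom_{\Context}(1_{\Context}; \IntHom{x}{z})$ is available, but the more direct route is to compute $\Hom_{\Context}(x; 1_{\Context})$ using the unitor isomorphism and the product's universal property. Since $\times$ is the categorical product, $1_{\Context}$ is the empty product, and a morphism into an empty product is uniquely determined: the universal property of the empty product says precisely that for every $x$ there is exactly one morphism $x \to 1_{\Context}$. This is the definition of being terminal, so $1_{\Context}$ is terminal.

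The main obstacle I anticipate is being careful about what ``unit object'' means and ensuring the two characterizations genuinely match up in the Cartesian setting. In a general monoidal category the unit is extra data satisfying coherence axioms, whereas the terminal object is defined purely by a universal property; the crux is recognizing that in the \emph{Cartesian} case the monoidal product is forced to be the categorical product, which collapses these into the same object. Concretely, I would need to invoke (or briefly justify) the standard fact that in a Cartesian monoidal category the unit is the terminal object by definition of the Cartesian structure, so the real content is verifying that the naturality and coherence conditions are automatically satisfied. I expect the forward direction (terminal $\Rightarrow$ unit) to require the more delicate argument, since it demands constructing the unitor isomorphisms and checking they are natural, whereas the reverse direction follows almost immediately once one identifies $1_{\Context}$ as the empty product.
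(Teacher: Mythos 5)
The paper states this lemma without any proof (it is invoked as a standard fact to justify that Assumption~\ref{A:trmnl} is automatic in the Cartesian case), so your proposal has to stand on its own; there is no proof in the paper to compare against. Your forward direction (terminal $\Rightarrow$ unit) is essentially correct: for a terminal object $t$, the pairing $\langle \Id_x, u_x\rangle : x \to x\times t$ (where $u_x$ is the unique morphism $x\to t$) is a two-sided inverse of the projection $\pi_1 : x\times t\to x$. You assert rather than check the composite $\langle \Id_x,u_x\rangle\circ \pi_1 = \Id_{x\times t}$, but it follows from the uniqueness clause of the product's universal property, since both sides have equal composites with $\pi_1$ and with $\pi_2$ (any two morphisms into $t$ coincide); naturality is inherited from naturality of the projections. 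Combined with uniqueness of monoidal units up to isomorphism, this legitimately yields $t\cong 1_{\Context}$.

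The genuine gap is in the converse. Your argument is: ``since $\times$ is the categorical product, $1_{\Context}$ is the empty product,'' and then terminality follows from the universal property of the empty product. But ``$1_{\Context}$ is the empty product'' is literally a restatement of ``$1_{\Context}$ is terminal'' --- you have assumed the conclusion. Your closing paragraph leans on this being true ``by definition of the Cartesian structure''; if that were the definition in force, the whole lemma would be vacuous and your forward direction pointless, whereas in the paper's setting the lemma must have content: it is what lets the paper conclude that an arbitrary closed monoidal context whose tensor happens to be the categorical product automatically satisfies Assumption~\ref{A:trmnl}. So the identification of the unit with the terminal object is exactly what needs proof. Fortunately, the repair is one line and uses only what you already established: a Cartesian category has a terminal object $t$ (the empty product exists by hypothesis); by your forward argument the projection $\pi_1 : 1_{\Context}\times t \to 1_{\Context}$ is an isomorphism, and the unitor $\lambda_t : 1_{\Context}\times t \to t$ is an isomorphism, whence $1_{\Context}\cong t$; and any object isomorphic to a terminal object is terminal. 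In other words, your two directions are not independent: proving that $t$ is a unit and invoking uniqueness of units settles the biconditional at once, and the circular ``empty product'' step should simply be deleted.
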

	
	With this Assumption in mind, we define :
	
	\begin{definition} [States]
		A \emph{state} of an object $\Omega$ in a monoidal category $\Context$ is any morphism $\omega : 1_{\Context} \to \Omega$.
	\end{definition}
	
	This concept of state is similar to the concept of an \emph{element} \cite[e.g.]{Flori_Topos_2013, Barr1986} in a general category, in which a monoidal product is replaced by a categorical product, and the monoidal unital element $1_{\Context}$ by a terminal element. In our context $\Context$, we thus interpret a state to be any morphism from the distinguished element $1_{\Context}$. The unit cum terminal object makes possible an elementary functor, which is of fundamental importance :
	
	\begin{definition} [Terminal functor]
		If Assumption \ref{A:trmnl} holds then there is an enriched functor $\Delta : \Time \to \Context$ with constant value $1_{\Context}$. The connecting morphism as defined in \eqref{eqn:def:enrich_func:Time:1} is the unique terminal morphism shown below on the left :
		\[ \begin{tikzcd} \TimeB \arrow[r, "!", "\Delta^\sharp"'] & 1_{\Context} \end{tikzcd} \]
		and the commutation relation \eqref{eqn:def:enrich_func:Time:2} trivially holds, as each of the objects in the second row become $1_{\Context}$.
	\end{definition}
	
	The connecting morphism $\Delta^\sharp$ satisfies the following commutation 
	\begin{equation} \label{eqn:ibx9z3}
		\forall
		\begin{tikzcd} 1_{\Context} \arrow[d, "\alpha"'] \\\IntHom{1_{\Context}}{\Omega} \end{tikzcd}, \quad
		\begin{tikzcd}
			\TimeB \arrow{d}[swap]{\cong} \arrow[r, "!"] & 1_{\Context} \arrow[r, "\alpha"] & \IntHom{1_{\Context}}{\Omega} \\
			1_{\Context} \otimes \TimeB \arrow{rr}[swap]{ \alpha \otimes \Delta^\sharp } && \IntHom{1_{\Context}}{\Omega} \otimes 1_{\Context}\arrow{u}[swap]{ \cong }
		\end{tikzcd}
	\end{equation}
	This leads to one of our new constructs :
	
	\begin{definition} [Enriched stationary state]
		An enriched stationary state of a functor $\tilde{\Phi}$ as in Assumption \ref{A:enrich_fnctr} is an enriched natural transformation $\tilde{\omega} : \Delta \Rightarrow \tilde{\Phi}$.
	\end{definition}
	
	An enriched natural transformation $\tilde{\omega} : \Delta \Rightarrow \tilde{\Phi}$ as above has only one connecting morphism which is
	\begin{equation} \label{eqn:def:enrich_nat:cone:1}
		\omega_* : 1_{\Context} \to \IntHom{1_{\Context}}{\Omega},
	\end{equation}
	Thus the enriched natural transformation $\tilde{\omega}$ leads to an element $\omega_*$ of the internal-element object $\IntHom{1_{\Context}}{\Omega}$. Since $\Context$ is closed, $\omega_*$ has a left-dual, which is
	\begin{equation} \label{eqn:def:enrich_nat:cone:4}
		^{*}\paran{ \omega_* } : 1_{\Context} \otimes 1_{\Context} \to \Omega.
	\end{equation}
	This morphism can be trivially combined with the unitor isomorphism for $1_{\Context}$ to get
	\begin{equation} \label{eqn:def:enrich_nat:cone:5}
		\begin{tikzcd}
			1_{\Context} \arrow[Shobuj, d, "\omega"', dashed] \arrow[rr, "\lambda_{1}^{-1}", "\cong"'] && 1_{\Context} \otimes 1_{\Context} \arrow[dll, "^{*}\paran{ \omega_* }"] \arrow[bend left=10, dr, "1_{\Context} \otimes \omega_*"] \\
			\Omega && & 1_{\Context} \otimes \IntHom{1_{\Context}}{\Omega} \arrow[lll, pos=0.2, "\eval_{1,\Omega}"']
		\end{tikzcd}
	\end{equation}
	The right commuting triangle follows from Lemma \ref{lem:idl3v:1} which will be presented later. It expresses a left dual in terms of the evaluation morphism. Thus an enriched stationary state leads to an ordinary state $\omega$ of $\Omega$.
	
	\begin{theorem} [Characterization of stationary states] \label{thm:sttnry}
		Suppose Assumptions \ref{A:dyn}, \ref{A:monoidal}, \ref{A:closed}, \ref{A:enrich_fnctr} and \ref{A:trmnl} hold. Then the following are equivalent :
		\begin{enumerate} [(i)]
			\item $\tilde{\Phi}$ has an enriched stationary state $\tilde{\omega}$. 
			\item There is a morphism $\omega_*$ as in \eqref{eqn:def:enrich_nat:cone:1} which satisfies 
			\begin{equation} \label{eqn:def:enrich_nat:cone:2}
				\begin{tikzcd}
					\TimeB \otimes 1_{\calC} \arrow{rr}{ \Phi^\sharp \otimes \omega_* } && \Endo(\Omega) \otimes \IntHom{1_{\Context}}{\Omega} \arrow{d}{ \circ } \\
					\TimeB \arrow{u}{\cong} \arrow{d}[swap]{\cong} && \IntHom{1_{\Context}}{\Omega} \\
					1_{\calC} \otimes \TimeB \arrow{rr}[swap]{ \tilde{\omega}_{*} \otimes \Delta^\sharp } && \IntHom{1_{\Context}}{\Omega} \otimes 1_{\Context}\arrow{u}[swap]{ \cong }
				\end{tikzcd}
			\end{equation}
			\item There is a morphism $\omega_*$ as in \eqref{eqn:def:enrich_nat:cone:1} which satisfies 
			\begin{equation} \label{eqn:def:enrich_nat:cone:3}
				\begin{tikzcd}
					\TimeB \otimes 1_{\calC} \arrow{rr}{ \Phi^\sharp \otimes \omega_* } && \Endo(\Omega) \otimes \IntHom{1_{\Context}}{\Omega} \arrow{d}{ \circ } \\
					\TimeB \arrow{u}{\cong} \arrow[r, "!"] & 1_{\Context} \arrow[r, "\omega_*"] & \IntHom{1_{\Context}}{\Omega} 
				\end{tikzcd}
			\end{equation}
		\end{enumerate}
		Any of these equivalent conditions imply
		\begin{enumerate} [(i), resume]
			\item There is a morphism $\omega_*$ as in \eqref{eqn:def:enrich_nat:cone:1} whose dual $\omega$ as in \eqref{eqn:def:enrich_nat:cone:5} satisfies
			\begin{equation} \label{eqn:jd92}
				\begin{tikzcd}
					\TimeB \otimes 1_{\Context} \arrow[r, "!"] \arrow[d, "\TimeB \otimes \omega"'] & 1_{\Context} \arrow[d, "\omega"] \\
					\TimeB \otimes \Omega \arrow[r, "\Phi"'] & \Omega
				\end{tikzcd}
			\end{equation}
		\end{enumerate}
	\end{theorem}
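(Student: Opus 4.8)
The plan is to prove the cycle $(i)\Leftrightarrow(ii)\Leftrightarrow(iii)$ by unwinding definitions together with a unitor simplification, and then to establish $(iii)\Rightarrow(iv)$ by externalizing the internal-hom identity through the evaluation morphism. I expect the first two equivalences to be essentially bookkeeping, and the implication $(iii)\Rightarrow(iv)$ to carry the real content.

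For $(i)\Leftrightarrow(ii)$ I would specialize the definition of an enriched natural transformation \eqref{eqn:def:enrich_nat:2} to $\calX=\Time$, $F=\Delta$ and $G=\tilde{\Phi}$. Since $\Time$ has a single object, the family $\omega_x$ collapses to the one morphism $\omega_*$ of \eqref{eqn:def:enrich_nat:cone:1}, and the substitutions $\calY(Gx;Gx')=\Endo(\Omega)$, $\calY(Fx;Gx)=\IntHom{1_{\Context}}{\Omega}$ and $\calY(Fx;Fx')=\IntHom{1_{\Context}}{1_{\Context}}\cong 1_{\Context}$ carry the naturality hexagon of \eqref{eqn:def:enrich_nat:2} onto diagram \eqref{eqn:def:enrich_nat:cone:2}, with $G_{x,x'}=\Phi^\sharp$ and $F_{x,x'}=\Delta^\sharp$; the last identification is what converts the lower $\circ$ into the unitor $\rho$ displayed in \eqref{eqn:def:enrich_nat:cone:2}. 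Hence an enriched stationary state is, tautologically, a morphism $\omega_*$ solving \eqref{eqn:def:enrich_nat:cone:2}.

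Next, $(ii)\Leftrightarrow(iii)$ simplifies only the lower leg. Under Assumption \ref{A:trmnl} the morphism $\Delta^\sharp$ is the terminal map $!:\TimeB\to 1_{\Context}$, so by naturality of the right unitor $\rho$ and terminality of $1_{\Context}$ the composite $\rho\circ(\omega_*\otimes\Delta^\sharp)\circ\lambda_{\TimeB}^{-1}$ appearing in the bottom row of \eqref{eqn:def:enrich_nat:cone:2} collapses to $\omega_*\circ\,!$, which is precisely the bottom row of \eqref{eqn:def:enrich_nat:cone:3}. The top rows of the two diagrams already coincide, so the diagrams express the same equation, giving $(ii)\Leftrightarrow(iii)$.

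Finally, $(iii)\Rightarrow(iv)$ is the substantive step. I would externalize \eqref{eqn:def:enrich_nat:cone:3} by tensoring each leg on the left with $1_{\Context}$ and postcomposing with $\eval_{1,\Omega}:1_{\Context}\otimes\IntHom{1_{\Context}}{\Omega}\to\Omega$. On the lower leg, $\eval_{1,\Omega}\circ(1_{\Context}\otimes\omega_*)$ is exactly the dual $\omega$ of \eqref{eqn:def:enrich_nat:cone:5} (justified by Lemma \ref{lem:idl3v:1}), so the externalized lower leg is $\omega\circ\,!$. On the upper leg I would invoke the definition of the enriched composition $\circ$ as the adjoint of the iterated evaluation displayed just after \eqref{eqn:enrch_fnctr_time:1}: evaluating $\circ\circ(\Phi^\sharp\otimes\omega_*)$ at $1_{\Context}$ first applies $\omega_*$ to produce $\omega$ and then acts by $\Phi^\sharp$, which by the sharp adjunction relating $\Phi^\sharp$ to $\Phi$ is $\Phi\circ(\TimeB\otimes\omega)$. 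Equating the two externalized legs yields \eqref{eqn:jd92}. The hard part will be the coherent tracking of the associators, the right unitor and the $\swap$ isomorphism, so that the externalized square is exactly \eqref{eqn:jd92} rather than a twisted variant; in particular one must check that unwinding $\circ$ through $\eval$ reproduces ``first evaluate the state $\omega_*$, then act by the endomorphism $\Phi^\sharp$'' in the correct order.
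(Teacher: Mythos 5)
Your proposal is correct and follows essentially the same route as the paper: the same specialization of the enriched-naturality diagram to the one-object case for $(i)\Leftrightarrow(ii)$, the same unitor/terminality collapse (the paper's \eqref{eqn:ibx9z3}) for $(ii)\Leftrightarrow(iii)$, and the same adjunction-based externalization for $(iii)\Rightarrow(iv)$ — your ``tensor with $1_{\Context}$ and postcompose with $\eval_{1,\Omega}$'' is exactly the dual/mate computation of Lemma \ref{lem:idl3v:1} that the paper carries out in its identities \eqref{eqn:jd92:1} and \eqref{eqn:jd92:2}. The coherence bookkeeping you flag as the hard part is precisely what the paper's two large commutative diagrams verify.
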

	
	Theorem \ref{thm:sttnry} is proved in Section \ref{sec:proof:sttnry}. Equation \eqref{eqn:jd92} is the classical representation of a stationary state of a pre-flow $\Phi$. One of the consequence of Theorem \ref{thm:sttnry} is that an enriched stationary state is also a stationary state in the ordinary sense. Such an implication is desirable since our objective is to establish enriched functors as an extension of the notion of ordinary dynamical systems. Being an extension should mean that the associated notions such as states as semi-group actions are also extended.
	
	The notion of state leads to the notion of orbits. We utilize Theorem \ref{thm:8} which makes the various descriptions of dynamics interchangeable. We shall be using the flow-form $\Phi : \TimeB \otimes \Omega \to \Omega$ of $\tilde{\Phi}$. Consider any state $\omega$ of $\Omega$ as shown below on the left :
	\begin{equation} \label{eqn:def:orbit}
		\begin{tikzcd} 1_{\Context} \arrow{d}{ \omega } \\ \Omega \end{tikzcd} \imply
		\begin{tikzcd}
			\TimeB\otimes 1_{\Context} \arrow{d}[swap]{ \TimeB \otimes \omega } & \TimeB \arrow[l, "="'] \arrow[d, dashed, "\Orbit_{\omega}"] \\
			\TimeB \otimes \Omega \arrow{r}{\Phi} & \Omega
		\end{tikzcd} 
		\begin{tikzcd} {} \arrow[rr, "\text{adjoint}"] && {} \end{tikzcd}
		\begin{tikzcd} 1_{\Context} \arrow{d}{ \Orbit_{\omega}^\flat } \\ \PathF(\Omega)  \end{tikzcd}
	\end{equation}
	This leads to the composite morphism $\Orbit_{\omega}$ shown in the center. Its right adjoint $\Orbit_{\omega}^\flat$ is that element of $\PathF(\Omega)$ which corresponds to the path traced out by $\omega$. 
	
	Although our construction of the path-space object $\PathF (\Omega)$ has been categorical, it contains all the information one would associate to the classical definition of a path-space. Unlike a set-theoretic description, this information is encoded not within $\PathF (\Omega)$ but in its hom-sets. For example, the time-0 evaluation which associates to every path its current state, is created as the following composite morphism :
	\begin{equation} \label{eqn:def:path_eval:1}
		\begin{tikzcd}
			1_{\Context} \otimes \PathF (\Omega) \arrow{d}[swap]{ \start \otimes \PathF (\Omega) } & \PathF (\Omega) \arrow[l, "="'] \arrow[d, "\eval_0", dashed, Shobuj] \\
			\TimeB \otimes \PathF (\Omega) \arrow[r, "\eval"] & \Omega
		\end{tikzcd}
	\end{equation}
	The time-0 evaluation can be combined with the natural flow of the path to get the time-t evaluation operation :
	\begin{equation} \label{eqn:def:path_eval:2}
		\begin{tikzcd}
			1_{\Context} \otimes \PathF (\Omega) \arrow{d}[swap]{ \start \otimes \PathF (\Omega) } & \PathF (\Omega) \arrow[l, "="'] \arrow[d, "\eval_0", dashed] && \TimeB \otimes \PathF (\Omega) \arrow[ll, "\shift_{\Omega}"'] \\
			\TimeB \otimes \PathF (\Omega) \arrow[r, "\eval"] & \Omega & \PathF (\Omega) \arrow[l, dashed, "\eval_t", Shobuj] \arrow[r, "="] & 1_{\Context} \otimes \PathF (\Omega) \arrow[u, "t\otimes \PathF (\Omega)"']
		\end{tikzcd}
	\end{equation}
	This completes our presentation of a categorical language for dynamical systems that completely captures all different aspects of dynamics - as a flow, as a semigroup action, as a parameterized family of endomorphisms, and as a subshift of the path-space. We have proposed that dynamical systems be studied as enriched functors and not simply as functors of a semigroup into the semigroup of endomorphisms of a domain $\Omega$. This interpretation is enabled by four categorical Assumptions \ref{A:dyn}, \ref{A:monoidal}, \ref{A:closed} and \ref{A:enrich_fnctr}.
	The preliminary notions of a dynamical system, such as states, invariance and paths occur naturally as by-products. Theorem \ref{thm:8} shows how this makes the three descriptions of dynamical systems interchangeable. This leads to the concept of an states, stationary enriched states, and stationary states. Theorem \ref{thm:8} and the notion of states lead to the notion of an orbit as an element of the path space of an object. 
	
	This work aims to provides the platform for a deeper investigation into dynamics, using a completely categorical language. In our final section, we look at  some further challenges to applying this framework to the classical contexts in dynamical systems theory. 
	
	\section{Conclusions} \label{sec:conclus}
	
	As mentioned earlier, the various contexts in dynamical systems usually have the neat structure of a functor category. This is illustrated by the following chain of monoidal categories :
	\begin{equation} \label{eqn:chain:1}
		\begin{tikzcd} [scale cd = 0.8]
			\VectCat \arrow[r, "\iota"] & \ManCat{r} \arrow[r, "\iota'"] & \Topo \arrow[r, "\iota''"] & \MeasCat \arrow[r, "\iota'''"] & \StochCat
		\end{tikzcd}
	\end{equation}
	The four context categories chosen in this diagram satisfy Assumption \ref{A:monoidal} and \ref{A:closed}. If time $\Time$ is chosen to be a topological semigroup such as $\Nplus$, $\integer$ or $\real$ then Assumptions \ref{A:dyn} and \ref{A:monoidal} would also be satisfied. The sequence of inclusions imply the fact that linear spaces and transformations are also smooth spaces and transformations; smooth spaces and transformations are topological spaces and continuous transformations; continuous transformations are Borel-measurable spaces and measurable transformations; and the latter are measurable spaces and Markov transitions. The last link is based on the realization that a deterministic map $f:X\to Y$ between two spaces is also a Markov transition, associating the point-measure $\delta_{f(x)}$ to each point $x\in X$. 
	The diagram \eqref{eqn:chain:1} give the following chain of dynamical systems
	\begin{equation} \label{eqn:chain:3}
		\begin{tikzcd} [scale cd = 0.8]
			\begin{array}{c} \mbox{Linear} \\ \mbox{dynamics} \\\Functor{\Time}{ \VectCat } \end{array}\arrow[r, "\iota\circ"] & 
			\begin{array}{c} C^r-\mbox{smooth} \\ \mbox{dynamics} \\ \Functor{\Time}{ \ManCat{r} } \end{array} \arrow[r, "\iota'\circ"] & 
			\begin{array}{c} \mbox{Topological} \\ \mbox{dynamics} \\  \Functor{\Time}{ \Topo } \end{array} \arrow[r, "\iota''\circ"] & 
			\begin{array}{c} \mbox{Measurable} \\ \mbox{dynamics} \\ \Functor{\Time}{ \MeasCat } \end{array} \arrow[r, "\iota'''\circ"] & 
			\begin{array}{c} \mbox{Markov} \\ \mbox{process} \\ \Functor{\Time}{ \StochCat } \end{array}
		\end{tikzcd} 
	\end{equation}
	The diagram in \eqref{eqn:chain:3} is useful as it formally states an inclusion of categories of dynamical systems in various categories. However this diagram only involves ordinary functor categories, and not enriched functor categories. So although intuitive and comprehensive, our theoretical results on enriched categories do not apply to these functors. Ideally, the dynamics in two different contexts should be expressed as enriched functor categories connected via enriched functors. There are two main challenges to this. First is the nature of the context category itself. The categories $\Topo$, $\MeasCat$ and $\StochCat$ are monoidal but not closed. There are many alternatives to these classical categories :
	\begin{enumerate} [(i)]
		\item the category $\CGWHS$ of \emph{compactly generated, weakly generated Hausdorff spaces} \cite{Steenrod1967cnvnnt} is a Cartesian closed subcategory of $\Topo$.
		\item the category $\QuasiBorel$ \cite{HeunenEtAl2017cnvnt} of \emph{quasi-Borel spaces} is a Cartesian closed subcategory of $\MeasCat$.
		\item the category $\Diffeological$ of \emph{diffeological spaces} \cite{Stacey2008smooth, BaezHoffnung2011cnvnnt} is a Cartesian closed subcategory of $\ManCat{1}$. Another alternative to $\ManCat{1}$ are Chen spaces \cite{Chen1977iterated}.
	\end{enumerate}
	An important task ahead is to develop the theory of enriched $\Time$-domain functors on these closed categories, and relate them to contexts they are meant to replace. 
	
	A field where a categorical formulation of dynamics can have a major impact is in the study of numerical methods for reconstructing dynamical systems. This field was recently placed in a categorical footing by the authors in \cite{DasSuda2024recon}. The entire process of reconstruction was presented as an inter-play between four different categories -- observed dynamical systems, dynamical systems, timeseries data, and subshifts. Reconstructions were shown to be closely related to left adjoints. One of the classic results in ergodic theory is that any finite timeseries can be created from Bernoulli processes as well as deterministic processes. To study the convergence of reconstruction algorithms, one needs to be cognizant of the marginal distributions of the source of the data. This requires incorporating a categorical version of the Kolmogorov consistency theorem \cite[e.g.]{FritzRischel2020inf, vanBelle2023mrtngl} using the language of \emph{filtered colimits} \cite[e.g.]{AndrekaNemeti1982direct}.
	
	Developing dynamical systems theory in a categorical language also requires new proofs of many of the classical results in this theory. An important example is a categorical proof of the ergodic decomposition theorem \cite{MossPerrone2022ergdc}. An important and unfinished task is show that the infinitely and densely nested character of chaotic, ergodic systems is a natural consequence of underlying categorical assumption. While the notion of nesting corresponds to morphisms and chains of morphisms in $\Functor{\Time}{\Topo}$, the concept of density is a harder concept to capture using categorical language \cite[e.g.]{ClementinoTholen1997sep, ClementinoGiuliTholen1996}.
	
	Developing a categorical restatement of dynamical systems theory is a promising venture, and there still remain several challenges such as these.
	\section{Acknowledgments}
	T.S. is grateful to the Grants-in-Aid for Scientific Research (Kakenhi) 25K17284.
	\section{Appendix}
	\subsection{Some technical lemmas} \label{lem:technical}
	
	\begin{lemma} \label{lem:idl3v:1}
		Suppose Assumptions \ref{A:monoidal} and \ref{A:closed} hold. Consider a morphism of the form $\phi : \TimeB \otimes X \to \Omega$ and its right-adjoint $\phi^* : X \to \IntHom{\TimeB}{\Omega}$. Then one has the following commutation 
		\begin{equation} \label{eqn:idl3v:1}
			\begin{tikzcd}
				\TimeB \otimes X \arrow{d}[swap]{ \TimeB \otimes \phi^* } \arrow{r}{\phi} & \Omega \\
				\TimeB \otimes \IntHom{\TimeB}{\Omega} \arrow{ur}[swap]{ \eval_{\Omega} }
			\end{tikzcd}
		\end{equation}
	\end{lemma}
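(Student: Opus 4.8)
The plan is to recognize this commutation as the statement that the evaluation morphism is the counit of the tensor--hom adjunction, so that $\eval_{\Omega}\circ(\TimeB\otimes\phi^{*})$ is precisely the morphism whose transpose under \eqref{eqn:closed_mnd:1} equals $\phi^{*}$. Since $\phi^{*}$ is by definition the transpose of $\phi$, injectivity of the adjunction bijection will then force $\eval_{\Omega}\circ(\TimeB\otimes\phi^{*})=\phi$, which is exactly the triangle of \eqref{eqn:idl3v:1}.

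Concretely, I would write $\Theta_{x}\colon \Hom_{\Context}\paran{\TimeB\otimes x;\Omega}\xrightarrow{\cong}\Hom_{\Context}\paran{x;\IntHom{\TimeB}{\Omega}}$ for the natural bijection supplied by Assumption \ref{A:closed} (taking $y=\TimeB$, $z=\Omega$ in \eqref{eqn:closed_mnd:1}), so that $\phi^{*}=\Theta_{X}(\phi)$. The single ingredient needed beyond the bare bijection is its naturality in the variable $x$: for every $h\colon x'\to x$ and every $\psi\colon \TimeB\otimes x\to\Omega$ one has $\Theta_{x'}\paran{\psi\circ(\TimeB\otimes h)}=\Theta_{x}(\psi)\circ h$. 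This naturality is part of the meaning of \emph{closed monoidal} --- it is exactly what makes $\IntHom{\TimeB}{-}$ right adjoint to $\TimeB\otimes-$ --- and so is granted by Assumption \ref{A:closed}.

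The key step is to instantiate this naturality square at $x=\IntHom{\TimeB}{\Omega}$, $\psi=\eval_{\Omega}$, and $h=\phi^{*}\colon X\to\IntHom{\TimeB}{\Omega}$. By \eqref{eqn:closed_mnd:3} the evaluation morphism is characterized by $\Theta_{\IntHom{\TimeB}{\Omega}}(\eval_{\Omega})=\Id_{\IntHom{\TimeB}{\Omega}}$, so naturality yields
$$\Theta_{X}\paran{\eval_{\Omega}\circ(\TimeB\otimes\phi^{*})}=\Theta_{\IntHom{\TimeB}{\Omega}}(\eval_{\Omega})\circ\phi^{*}=\Id\circ\phi^{*}=\phi^{*}=\Theta_{X}(\phi).$$
Since $\Theta_{X}$ is a bijection, applying $\Theta_{X}^{-1}$ (equivalently, using injectivity of $\Theta_{X}$) gives $\eval_{\Omega}\circ(\TimeB\otimes\phi^{*})=\phi$, completing the argument.

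The only genuine obstacle is ensuring that the naturality of \eqref{eqn:closed_mnd:1} in $x$ is truly provided by Assumption \ref{A:closed} rather than quietly assumed; I would therefore flag at the outset that ``closed monoidal'' is read as asserting that the bijection \eqref{eqn:closed_mnd:1} is natural in both $x$ and $z$, i.e.\ that $\IntHom{\TimeB}{-}$ is a right adjoint, since without this naturality the claimed identity can fail. Everything past that point is a one-line diagram chase, and the argument never invokes the symmetry of $\otimes$, so the companion statement for the flat adjoint follows identically with the two tensor factors interchanged.
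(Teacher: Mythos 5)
Your proposal is correct and takes essentially the same route as the paper: the paper disposes of this lemma in one line by noting that $\eval_{\Omega}$ is the counit of the tensor--hom adjunction, and your argument is precisely that fact spelled out, using the characterization $\Theta_{\IntHom{\TimeB}{\Omega}}(\eval_{\Omega})=\Id$ from \eqref{eqn:closed_mnd:3} together with naturality and injectivity of the adjunction bijection \eqref{eqn:closed_mnd:1}. Your explicit flagging of naturality in the variable $x$ is a reasonable precaution, but it is indeed part of what Assumption \ref{A:closed} asserts, so there is no gap.
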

	
	Lemma \ref{lem:idl3v:1} is a direct consequence of the fact that the evaluation morphism corresponds to the counit of the product-inner product adjunction of $\Context$. Lemma \ref{lem:idl3v:1} provides a formulation for left-adjoint $^*\gamma$ of a morphisms $\gamma$ with co-domain $\IntHom{\TimeB}{\Omega}$ :
	\begin{equation} \label{eqn:idl3v:4}
		\begin{tikzcd}
			\TimeB \otimes X \arrow{d}[swap]{ \TimeB \otimes \gamma } \arrow[dashed]{r}{^*\gamma} & \Omega \\
			\TimeB \otimes \IntHom{\TimeB}{\Omega} \arrow{ur}[swap]{ \eval_{\Omega} }
		\end{tikzcd}
	\end{equation}
	Thus the left adjoints of morphisms with co-domain $\IntHom{\TimeB}{\Omega}$ can be obtained by composition with the evaluation morphism. Two special cases of Lemma \ref{lem:idl3v:1} are important to us. The first is the following instance of $\tilde{\Phi}$ : 
	\[ X:= \TimeB \otimes \IntHom{ \TimeB }{ \Omega } , \quad \phi := \TimeB \otimes \TimeB \otimes \IntHom{ \TimeB }{ \Omega } \xrightarrow{\add \otimes \IntHom{ \TimeB }{ \Omega } }  \TimeB \otimes \IntHom{ \TimeB }{ \Omega } \xrightarrow{\eval_\Omega} \Omega. \]
	Recall that the adjoint of this morphism is the $\shift$-morphism. Then the commutation in \eqref{eqn:idl3v:1} takes the form :
	\begin{equation} \label{eqn:idl3v:2}
		\begin{tikzcd}
			\TimeB \otimes \TimeB \otimes \IntHom{ \TimeB }{ \Omega } \arrow{d}[swap]{ \TimeB \otimes \shift } \arrow{rr}{ \add \otimes \IntHom{ \TimeB }{ \Omega } } && \TimeB \otimes \IntHom{ \TimeB }{ \Omega } \arrow{d}{ \eval_{\Omega} } \\
			\TimeB \otimes \IntHom{\TimeB}{\Omega} \arrow{rr}[swap]{ \eval_{\Omega}  } && \Omega
		\end{tikzcd}
	\end{equation}
	The next instance of Lemma \ref{lem:idl3v:1} assumes
	\[ X:= \TimeB \otimes \IntHom{ \TimeB }{ \Omega } , \quad
	\begin{tikzcd}
		\TimeB \otimes \TimeB \otimes \IntHom{ \TimeB }{ \Omega } \arrow[dr, dashed, "\phi"'] \arrow{rr}{ \swap_{T,T} \otimes \IntHom{ \TimeB }{ \Omega } } && \TimeB \otimes \TimeB \otimes \IntHom{ \TimeB }{ \Omega } \arrow{d}[swap]{ \TimeB \otimes \eval_{\Omega} } \\
		& \Omega & \TimeB \otimes \Omega \arrow{l}[swap]{ \Phi  }
	\end{tikzcd}\]
	Recall that the adjoint to this $\tilde{\Phi}$ is the morphism $\Transfer{\Phi}$ from \eqref{eqn:def:transfer}. Then the commutation in \eqref{eqn:idl3v:1} takes the form : 
	\begin{equation} \label{eqn:idl3v:3}
		\begin{tikzcd}
			\TimeB \otimes \TimeB \otimes \IntHom{ \TimeB }{ \Omega } \arrow{d}[swap]{ \TimeB \otimes \Transfer{\Phi} } \arrow{rr}{ \swap_{T,T} \otimes \IntHom{ \TimeB }{ \Omega } } && \TimeB \otimes \TimeB \otimes \IntHom{ \TimeB }{ \Omega } \arrow{d}[swap]{ \TimeB \otimes \eval_{\Omega} } \\
			\TimeB \otimes \IntHom{ \TimeB }{ \Omega } \arrow{r}{ \eval_{\Omega}  } & \Omega & \TimeB \otimes \Omega \arrow{l}[swap]{ \Phi  }
		\end{tikzcd}
	\end{equation}

	We now consider two useful consequences of \eqref{eqn:idl3v:4}. Taking $X = \TimeB \otimes \TimeB \otimes \IntHom{\TimeB}{\Omega}$ and $\gamma = \shift \circ \paran{ \TimeB \otimes \shift }$ gives
	\begin{equation} \label{eqn:proof:thm:1:1}
		\begin{tikzcd} [scale cd = 0.8]
			\TimeB \otimes \TimeB \otimes \IntHom{\TimeB}{\Omega} \arrow[r, dashed, "\gamma"] \arrow{dr}[swap]{ \TimeB \otimes \shift } & \IntHom{\TimeB}{\Omega} \\
			& \TimeB \otimes \IntHom{\TimeB}{\Omega} \arrow[u, "\shift"']
		\end{tikzcd}
		\begin{tikzcd} {} \arrow[rr, "\text{left}", "\text{adjoint}"'] &&{}	\end{tikzcd}
		\begin{tikzcd} [scale cd = 0.8]
			\TimeB \otimes \TimeB \otimes \TimeB \otimes \IntHom{\TimeB}{\Omega} \arrow[rr, dashed, "^*\gamma"] \arrow{d}[swap]{\TimeB \otimes \TimeB \otimes \shift} && \Omega\\ 
			\TimeB \otimes \TimeB \otimes \IntHom{\TimeB}{\Omega} \arrow{rr}[swap]{ \TimeB \otimes \shift } && \TimeB \otimes \IntHom{\TimeB}{\Omega} \arrow{u}[swap]{\eval_{\Omega}}
		\end{tikzcd}
	\end{equation}
	Next taking $X = \TimeB \otimes \TimeB \otimes \IntHom{\TimeB}{\Omega}$ and $\gamma = \shift \circ \paran{ \add \otimes \IntHom{\TimeB}{\Omega} }$ gives
	\begin{equation} \label{eqn:proof:thm:1:5}
		\begin{tikzcd} [scale cd = 0.8]
			\TimeB \otimes \TimeB \otimes \IntHom{\TimeB}{\Omega} \arrow[r, dashed, "\gamma"] \arrow{dr}[swap]{ \add \otimes \IntHom{\TimeB}{\Omega} } & \IntHom{\TimeB}{\Omega} \\
			& \TimeB \otimes \IntHom{\TimeB}{\Omega} \arrow[u, "\shift"']
		\end{tikzcd}
		\begin{tikzcd} {} \arrow[rr, "\text{left}", "\text{adjoint}"'] &&{}	\end{tikzcd}
		\begin{tikzcd} [scale cd = 0.8]
			\TimeB \otimes \TimeB \otimes \TimeB \otimes \IntHom{\TimeB}{\Omega} \arrow[rr, dashed, "^*\gamma"] \arrow{d}[swap]{ \TimeB \otimes \mu \otimes \IntHom{\TimeB}{\Omega} } && \Omega\\ 
			\TimeB \otimes \TimeB \otimes \IntHom{\TimeB}{\Omega} \arrow{rr}[swap]{ \TimeB \otimes \shift } && \TimeB \otimes \IntHom{\TimeB}{\Omega} \arrow{u}[swap]{\eval_{\Omega}}
		\end{tikzcd}
	\end{equation}
	%
	
	\subsection{Results on enriched category theory} \label{sec:proof:enrch_fnctr_time}
	
	The purpose of this section is to examine the theoretical tools needed to switch back and forth between enriched semigroups, monoids, enriched functors and flows. We begin with an important  observation that $\TimeB$ itself is a familiar categorical construct.
	
	\begin{lemma} \label{lem:enriched:3}
		Let $\Time$ be a semigroup enriched over a monoidal category $\Context$. Then the objects and morphisms $\paran{ \TimeB, \add, \start }$ defined in Section \ref{sec:time} form a monoid object. 
	\end{lemma}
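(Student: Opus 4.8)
The plan is to check directly that the triple $\paran{\TimeB, \add, \start}$ meets the definition of a monoid object in the monoidal category $\Context$: an object equipped with an associative multiplication $\add : \TimeB \otimes \TimeB \to \TimeB$ and a two-sided unit $\start : 1_\Context \to \TimeB$. The essential point is that none of this requires new construction — all the data and all the coherences are already supplied by the enrichment axioms of $\Time$, once those axioms are specialized to the fact that $\Time$ is a one-object $\Context$-category.

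First I would record that specialization. Writing $\star$ for the unique object of $\Time$, every hom-object $\Time(a,b)$ occurring in the enrichment axioms collapses to $\Time(\star,\star) = \TimeB$. Hence the composition morphism $\circ_{\star,\star,\star}$ of enrichment axiom (iii) is exactly a morphism $\TimeB \otimes \TimeB \to \TimeB$, which is by definition $\add$; and the distinguished morphism of enrichment axiom (ii), taken at $a=b=\star$ and $f = \Id_\star$, is exactly a morphism $1_\Context \to \TimeB$, namely $\start$. It is worth emphasizing that the unit $\start$ exists precisely because $\Time$, although called a semigroup, carries an identity morphism $\Id_\star$ when regarded as a category; it is the image of this identity under enrichment that provides the monoid unit.

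Next I would match the three coherence conditions one for one. Replacing every hom-object by $\TimeB$, the associativity diagram of enrichment axiom (iii) becomes exactly the associativity square for $\add$, that is, the left-hand diagram of \eqref{eqn:TimeB:id}; and the left- and right-identity triangles of the same axiom become the two unit triangles for $\start$ composed with $\add$, that is, the right-hand diagrams of \eqref{eqn:TimeB:id}. These are verbatim the associativity and unit laws in the definition of a monoid object. Having verified all three commutations, I would conclude that $\paran{\TimeB, \add, \start}$ is a monoid object, as claimed.

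Since the proof is entirely a transcription of the enrichment axioms into monoid language, there is no genuine difficulty; the only point deserving care is that both families of diagrams are formulated relative to the same associator and unitors of $\Context$, so that the coherence isomorphisms appearing in the monoid axioms are literally those appearing in the enrichment axioms, and the identification of the two sets of diagrams is exact rather than merely up to coherence.
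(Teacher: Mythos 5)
Your proposal is correct and takes essentially the same approach as the paper: the paper's entire proof is the observation that the lemma follows directly from the definition \eqref{eqn:def:TimeB} and the identities \eqref{eqn:TimeB:id}, i.e., that the enrichment data of the one-object category $\Time$, once every hom-object is collapsed to $\TimeB$, is verbatim the data and coherence diagrams of a monoid object. Your write-up simply makes explicit what the paper leaves implicit (including the correct remark that the unit $\start$ comes from the identity morphism $\Id_\star$ that $\Time$ carries as a category).
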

	
	The lemma directly follows from the definition and identities in \eqref{eqn:def:TimeB} and \eqref{eqn:TimeB:id}. The converse also happens to be true, that a monoid object leads to a semigroup enriched over $\Context$ :    
	
	\begin{lemma} \label{lem:enriched:4}
		Let Assumptions \ref{A:monoidal} and \ref{A:closed} hold. Let $\paran{ \TimeB, \add: \TimeB\otimes \TimeB \to \TimeB, \start : 1_{\Context} \to T }$  be a monoid in $\Context$. Then we can define a one-object category $\mathcal{T}_T$ enriched over $\Context$ as follows :
		\begin{enumerate} [(i)]
			\item The only object is denoted as $*$.
			\item The morphisms correspond to the $\Context$-morphisms $1_{\Context} \to T$. The composition $a\cdot b$ of two morphisms $a$ and $b$ is defined by 
			\[\begin{tikzcd}
				1_{\Context} \arrow[d, "\cong"] \arrow[drrr, dashed, "a \cdot b", bend left = 20] \\
				1_{\Context} \otimes 1_{\Context} \arrow[rr, "b\otimes a"] && \TimeB \otimes \TimeB \arrow[r, "\add"]  & \TimeB
			\end{tikzcd} \]
			\item The identity morphism is given by $\start$.
		\end{enumerate}
	\end{lemma}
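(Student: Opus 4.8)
The plan is to recognize this as the standard correspondence between monoid objects and one-object enriched categories, so that almost all of the work is the observation that the three coherence diagrams in the definition of a $\Context$-enriched category specialize, for a single object, to precisely the defining diagrams of a monoid. Concretely, I would first declare the enriched data: set $\mathcal{T}_T(*,*) := \TimeB$, take the enriched composition morphism $\circ_{*,*,*}$ to be $\add : \TimeB \otimes \TimeB \to \TimeB$, and take the enriched unit $\start_* : 1_{\Context} \to \TimeB$ to be $\start$. Since there is only one object, every hom-object $\calX(a,b)$ occurring in the enriched-category axioms collapses to $\TimeB$; the associativity pentagon then reduces to the associativity square for $\add$, and the left and right unit triangles reduce to the unit triangles for $\start$. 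These are exactly the commutations \eqref{eqn:TimeB:id} guaranteed by the hypothesis that $(\TimeB, \add, \start)$ is a monoid, so the enriched axioms hold with no further computation.

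Next I would exhibit the underlying ordinary category. Its hom-set $\Hom_{\mathcal{T}_T}(*;*)$ is taken to be the collection of global elements $\Hom_{\Context}(1_{\Context}; \TimeB)$, with each $a : 1_{\Context} \to \TimeB$ serving as its own enriched name $a_{\Context}$; the identity morphism is $\start$. The composite is the displayed dashed arrow, i.e. $a \cdot b := \add \circ (b \otimes a) \circ \lambda_{1_{\Context}}^{-1}$. This formula is forced by compatibility with the enriched composition $\add$, so that the ordinary composite is represented, as a global element of $\TimeB$, by the monoid multiplication. I would note that symmetry of $\Context$ is not actually needed here: a monoid in any monoidal category yields a one-object enriched category, and the swap appearing in the convention $b \otimes a$ is merely an ordering of factors, not an application of $\swap$.

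The substantive checks are then associativity and the unit laws of this induced composition. For associativity I would compare $(a \cdot b) \cdot c$ with $a \cdot (b \cdot c)$ by pasting two copies of the composition diagram; after using bifunctoriality of $\otimes$ to slide the structure maps past one another and the associator to reassociate $\TimeB \otimes \TimeB \otimes \TimeB$, both composites reduce to $\add \circ (\add \otimes \TimeB)$ (resp. $\add \circ (\TimeB \otimes \add)$) applied to $c \otimes b \otimes a$, and these agree precisely by the associativity square of \eqref{eqn:TimeB:id}. For the unit laws, precomposing with $\start$ on either side and invoking the left and right unit triangles of the monoid, together with the unitors $\lambda, \rho$, returns the original element $a$. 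I expect the associativity verification to be the only genuine obstacle: it is a routine but careful diagram chase that must correctly interleave the associator of $\otimes$, the bifunctorial interchange law, and the monoid associativity, with attention to the order-reversal $b \otimes a$ built into the composition convention.
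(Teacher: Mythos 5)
Your proof is correct: identifying the monoid $(\TimeB,\add,\start)$ with a one-object $\Context$-enriched category (hom-object $\TimeB$, enriched composition $\add$, unit $\start$), noting that the enriched associativity and unit axioms collapse for a single object to exactly the monoid diagrams \eqref{eqn:TimeB:id}, and then verifying associativity and unitality of the induced composition on global elements $1_{\Context}\to\TimeB$ is precisely the standard argument, and your side remark that symmetry of $\Context$ is not needed for this direction is also accurate. The paper gives no explicit proof of this lemma --- it is presented as an immediate consequence of the definitions, in the same spirit as Lemma \ref{lem:enriched:3} --- so your write-up supplies, correctly, exactly the verification the paper leaves implicit.
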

	
	The next lemma formally states an intuitive fact, that the enriched semigroup created from a monoid object which itself is created out of an enriched semigroup, coincides with the original semigroup. 
	
	\begin{lemma} \label{lem:enriched:5}
		Let Assumptions \ref{A:dyn}, \ref{A:monoidal} and \ref{A:closed} hold. Let $\TimeB$ be the $\Context$-object created from $\Time$ by enrichment, and $\mathcal{T}_{\TimeB}$ be the semigroup created from $\TimeB$ according to Lemma \ref{lem:enriched:4}. Then $\mathcal{T}_{\TimeB}$ coincides with $\Time$.
	\end{lemma}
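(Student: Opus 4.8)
The plan is to verify directly that $\mathcal{T}_{\TimeB}$ and $\Time$ agree as $\Context$-enriched one-object categories, by comparing the three pieces of categorical data in turn: objects, morphisms together with their identities, and composition. Since $\Time$ has a single object $*$ and, by construction in Lemma~\ref{lem:enriched:4}, $\mathcal{T}_{\TimeB}$ also has a single object, the two categories trivially agree on objects, and I will henceforth identify these objects. It then suffices to produce an isomorphism of hom-objects compatible with identities and composition.

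For the morphisms I would invoke clause (ii) of the enrichment of $\Time$ as specialized in Section~\ref{sec:time}: each $t\in\Hom_{\Time}(*,*)$ determines a $\Context$-morphism $f_t:1_{\Context}\to\TimeB$, while the morphisms of $\mathcal{T}_{\TimeB}$ are by definition exactly the $\Context$-morphisms $1_{\Context}\to\TimeB$. The content of this step is that $t\mapsto f_t$ is a \emph{bijection} onto $\Hom_{\Context}(1_{\Context};\TimeB)$, which is precisely the requirement that the underlying ordinary category of the enrichment $\TimeB=\Hom_{\Time}(*,*)$ reproduce $\Time$ — i.e. what it means for $\Time$ to be enriched over $\Context$ in the sense of Section~\ref{sec:time}. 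Under this bijection the identity $\Id_*$ of $\Time$ is sent to $\start_{\star}=f_{\Id_*}$, which is the unit $\start$ of the monoid $\TimeB$ and hence exactly the identity prescribed for $\mathcal{T}_{\TimeB}$ in Lemma~\ref{lem:enriched:4}.

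It remains to match composition, which is the main step. In the underlying ordinary category of an enriched category the composite of two global points is formed by precomposing the enriched composition morphism with the tensor of the two points and the unit isomorphism. For $\Time$ the enriched composition morphism is $\add$ by the specialization of clause (iii) in Section~\ref{sec:time}, so taking $s$ then $t$ the semigroup product $t\circ s$ maps under $f_{\bullet}$ to the composite $1_{\Context}\xrightarrow{\cong}1_{\Context}\otimes1_{\Context}\xrightarrow{f_t\otimes f_s}\TimeB\otimes\TimeB\xrightarrow{\add}\TimeB$. This is word-for-word the formula defining composition in $\mathcal{T}_{\TimeB}$ in Lemma~\ref{lem:enriched:4}, so $f_{t\circ s}$ is exactly the $\mathcal{T}_{\TimeB}$-composite of $f_s$ and $f_t$; the identity itself is just the naturality of the assignment $f_{\bullet}$ with respect to the composition axiom of the enrichment. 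Hence objects, morphisms, identities and composition all coincide, giving $\mathcal{T}_{\TimeB}=\Time$.

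The step I expect to be the genuine obstacle is the bijectivity of $t\mapsto f_t$. Clause (ii) supplies the assignment in only one direction; to rule out \emph{extra} morphisms in $\mathcal{T}_{\TimeB}$ beyond those of $\Time$ one must use that the enrichment of $\Time$ is chosen so that its underlying ordinary category is $\Time$, equivalently that the global points of $\TimeB$ are in bijection with the semigroup elements. Everything else is bookkeeping: carrying the unitor coherence isomorphism $1_{\Context}\xrightarrow{\cong}1_{\Context}\otimes1_{\Context}$ through the argument and tracking the order of the tensor factors $f_t\otimes f_s$ so that the two composition conventions align exactly.
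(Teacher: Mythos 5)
Your proposal is correct, but there is essentially nothing in the paper to compare it against: the paper offers no proof of Lemma~\ref{lem:enriched:5}, saying only that it ``formally states an intuitive fact'' and that the constructions of Lemma~\ref{lem:enriched:3} and Lemma~\ref{lem:enriched:4} are inverses of each other. Your three-part verification (objects; morphisms and identities; composition) is the natural way to substantiate that assertion, and your composition step reproduces the defining diagram of Lemma~\ref{lem:enriched:4} exactly, up to the tensor-order convention ($b\otimes a$ there versus $f_t\otimes f_s$ in your write-up), which you rightly file under bookkeeping. The real value of your write-up is that it isolates the one genuinely non-tautological point, which the paper glosses over: clause (ii) of the paper's definition of enrichment only provides an assignment $t\mapsto f_t$ from $\Hom_{\Time}(*,*)$ into $\Hom_{\Context}\paran{1_{\Context};\TimeB}$, whereas the morphisms of $\mathcal{T}_{\TimeB}$ are by construction \emph{all} of $\Hom_{\Context}\paran{1_{\Context};\TimeB}$. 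Without requiring $t\mapsto f_t$ to be a bijection --- equivalently, that the underlying ordinary category of the enrichment be $\Time$ itself --- the lemma is false: a one-element semigroup satisfies the paper's clauses (i)--(iii) with $\TimeB$ taken to be \emph{any} monoid object, including one with many global points, and then $\mathcal{T}_{\TimeB}$ is strictly larger than $\Time$. The same remark applies to the compatibility $f_{t\cdot s}=\add\circ\paran{f_s\otimes f_t}\circ\lambda_{1_{\Context}}$ that your composition step invokes: it is clearly intended but never stated in the paper's definition of enrichment. Both conditions do hold in the paper's intended contexts (where $1_{\Context}$ is terminal and global points recover underlying sets, cf.\ \eqref{eqn:def:TimeB}), so reading them into Assumption~\ref{A:monoidal} is the charitable interpretation; your proof, with that reading made explicit, is complete and is in fact more rigorous than the paper's treatment.
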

	
	In other words the constructions Lemma \ref{lem:enriched:3} and Lemma \ref{lem:enriched:4} are inverses of each other. Having established the dual nature of enriched semigroups, we turn to  Lemma \ref{lem:enrch:1}. 
	
	\paragraph{Proof of Lemma \ref{lem:enrch:1}} Claim~(i) follows directly from the discussion preceding the theorem. We prove the Claim~(ii). We begin with the "if" part of the statement. So we assume that there is an enriched functor $\tilde{\psi}$, whose parametric form is $\psi^\sharp$, and $\psi$ is its left adjoint. Now consider the following diagram:
	\begin{equation} \label{eqn:enrch:1:1}
		\begin{tikzcd}[column sep = large]
			& 1_{\Context} \otimes \Omega \arrow[d, "\start_* \otimes \Omega"] \arrow[ddl, bend right=20, "\start_{\Omega} \otimes \Omega"'] \arrow[ddr, bend left=20, "\lambda_{\Omega}^{(l)}", "\cong"'] \\
			& \TimeB \otimes \Omega \arrow[dr, "\Psi"] \arrow[dl, "\Psi^\sharp \otimes \Omega"'] & \\
			\IntHom{\Omega}{\Omega} \otimes \Omega \arrow[rr, "\eval_{\Omega}"] & &  \Omega
		\end{tikzcd}
	\end{equation}
	The commutation in the periphery is just the unit law of the evaluation morphism. The lower commutation triangle is the adjoint relation of Lemma \ref{lem:idl3v:1}. The left commutation triangle follows from \eqref{eqn:enrch_fnctr_time:1}. The right commutation triangle is a composite of these three commutations. It also happens to be the unit law in \eqref{eqn:enrch_fnctr_time:2}. Next consider the diagram :
	\begin{equation}\label{eqn:enrch:1:2}
		\begin{tikzcd}[scale cd = 1]
			\TimeB \otimes \TimeB \otimes \Omega \arrow[rrrrr, "\TimeB \otimes \Psi"] \arrow{drr}{\TimeB\otimes  \Psi^\sharp \otimes \Omega} \arrow{dddd}{\add \otimes \Omega} && && & \TimeB \otimes \Omega \arrow{ddl}{\Psi^\sharp \otimes \Omega} \arrow[dddd, "\eval_{\Omega}"'] \\
			{} && \TimeB \otimes \IntHom{\Omega}{\Omega} \otimes \Omega \arrow{urrr}{\TimeB\otimes  \eval_\Omega} \arrow{d}{\Psi^\sharp \otimes \IntHom{\Omega}{\Omega} \otimes \Omega} && {} \\
			{} && \IntHom{\Omega}{\Omega} \otimes \IntHom{\Omega}{\Omega} \otimes \Omega \arrow{rr}{\IntHom{\Omega}{\Omega} \otimes \eval_\Omega} \arrow{d}{\circ \otimes \Omega} && \IntHom{\Omega}{\Omega} \otimes \Omega \arrow{ddr}{\eval_\Omega} \\
			&& \IntHom{\Omega}{\Omega} \otimes \Omega \arrow{drrr}{\eval_\Omega} && {} \\
			\TimeB \otimes \Omega \arrow[rrrrr, "\Psi"] \arrow{urr}{\Psi^\sharp \otimes \Omega} && && & \Omega
		\end{tikzcd}
	\end{equation}
	Each of the commutation loops in this diagram are instances of the adjunction in Lemma \ref{lem:idl3v:1}. The commutation on the periphery is the composite of all the minimal commutation loops. It expresses the composition law of \eqref{eqn:enrch_fnctr_time:2}. This completes the proof of the "if" part. 
	
	The "only if" part also follows a study of the commutation diagrams \eqref{eqn:enrch:1:1} and \eqref{eqn:enrch:1:2} above. In this case, the smaller commutation loops which are assumed and which need to be proved switch roles. This completes the proof of Lemma \ref{lem:enrch:1}. \qed
	
	Lemma \ref{lem:enrch:1} thus establishes an equivalence of left $\TimeB$ actions and enriched functors. An left action can be created not only out of an enriched semigroup, but also from a monoid object. We next see that the latter also leads to an enriched semigroup action :    
	
	\begin{lemma} \label{lem:enriched:2}
		Let $\Context$ be a closed monoidal category and $\paran{ \TimeB, \add: \TimeB\otimes \TimeB \to \TimeB, \start :\Context\to \TimeB }$ be a monoid of $\Context$. If $F: \TimeB \otimes \Omega \to \Omega$ is a left $\TimeB$-action, then the following construction defines a functor $\Time_{\TimeB} \to \Context$ enriched over $\Context$.
		\begin{itemize}
			\item The object $*$ corresponds to $\Omega$.
			\item  A morphism $a: 1_{\Context} \to \TimeB$ in $\Time_{\TimeB}$ corresponds to
			\[
			\Omega \to 1_{\Context} \otimes \Omega \xrightarrow{a \otimes \Omega} \TimeB \otimes \Omega \xrightarrow{F} \Omega.
			\]
		\end{itemize}
	\end{lemma}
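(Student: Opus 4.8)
The plan is to present the stated assignment as a genuine enriched functor $G : \Time_{\TimeB} \to \Context$, to read its axioms off the already-proved equivalence of Lemma \ref{lem:enrch:1}, and finally to match the element-level description in the statement. First I would record the enriched data. On objects $G$ sends the unique object $*$ to $\Omega$. On hom-objects it must supply a $\Context$-morphism $\Time_{\TimeB}(*,*) \to \Context(\Omega, \Omega)$; by the construction of Lemma \ref{lem:enriched:4} the source is $\TimeB$, and by the self-enrichment of Lemma \ref{lem:dj9da3} the target is $\Endo(\Omega) = \IntHom{\Omega}{\Omega}$. I take this morphism to be the sharp adjoint
\[ G_{*,*} := F^\sharp : \TimeB \longrightarrow \Endo(\Omega), \]
which exists because $\Context$ is closed symmetric monoidal.

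Second I would verify the enriched-functor axioms. Set $\Time := \Time_{\TimeB}$, the enriched semigroup produced from the monoid $\TimeB$ by Lemma \ref{lem:enriched:4}; then Assumptions \ref{A:dyn} and \ref{A:monoidal} are met, \ref{A:closed} holds by hypothesis, and the hom-object of $\Time_{\TimeB}$ is $\TimeB$ with monoid structure $(\add, \start)$ by construction (Lemmas \ref{lem:enriched:3}, \ref{lem:enriched:4} and \ref{lem:enriched:5}). Consequently part (ii) of Lemma \ref{lem:enrch:1} is available. Since $F$ is a left $\TimeB$-action it satisfies the unit and associativity diagrams \eqref{eqn:enrch_fnctr_time:2}, so its sharp adjoint $F^\sharp$ satisfies the commutations \eqref{eqn:enrch_fnctr_time:1}. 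These are precisely the identity-preservation \eqref{eqn:def:enrich_func:2} and composition-preservation \eqref{eqn:def:enrich_func:3} axioms specialized to the one-object category $\Time_{\TimeB}$ with image $\Omega$; hence the pair $(G_{*,*}, \Omega)$ is an enriched functor.

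Third I would confirm that the ordinary functor underlying $G$ agrees with the assignment in the statement. A morphism $a : 1_\Context \to \TimeB$ of $\Time_{\TimeB}$ is carried to the point $F^\sharp \circ a : 1_\Context \to \Endo(\Omega)$, which under the bijection \eqref{eqn:closed_mnd:2} names a morphism $\Omega \to \Omega$. Unwinding this bijection through $\eval_\Omega$ and invoking the defining sharp-adjunction relation between $F$ and $F^\sharp$ (the symmetric counterpart of Lemma \ref{lem:idl3v:1}) rewrites it as the composite
\[ \Omega \xrightarrow{\ \lambda_\Omega^{-1}\ } 1_\Context \otimes \Omega \xrightarrow{\ a \otimes \Omega\ } \TimeB \otimes \Omega \xrightarrow{\ F\ } \Omega, \]
which is exactly the morphism prescribed in the statement. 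Preservation of identities and of the composition $a \cdot b$ of Lemma \ref{lem:enriched:4} is then automatic, since the underlying ordinary functor of any enriched functor preserves units and composites.

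The main obstacle is this third step: one must carry the symmetry isomorphism $\swap$ and the unitors accurately when passing between the enriched hom-morphism $F^\sharp$ and the element-level formula, and check that the induced composition of points $F^\sharp \circ a$ matches the monoid law $a \cdot b$. Everything else reduces to a direct citation of the equivalence between left $\TimeB$-actions and enriched functors established in Lemma \ref{lem:enrch:1}.
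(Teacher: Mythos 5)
Your proposal is correct and follows what the paper itself intends: the paper states Lemma \ref{lem:enriched:2} without a standalone proof, presenting it as an immediate consequence of the construction of $\Time_{\TimeB}$ in Lemma \ref{lem:enriched:4} together with the equivalence between left $\TimeB$-actions and enriched functors in Lemma \ref{lem:enrch:1}, which is exactly the chain of citations you use. Your third step (unwinding $F^\sharp \circ a$ through $\eval_{\Omega}$ to recover $F \circ (a \otimes \Omega)$, modulo unitors and $\swap$) is the only part the paper leaves entirely implicit, and your handling of it is sound.
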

	
	We end with an observation that an enriched functor can be reconstructed by the construction of Lemma \ref{lem:enriched:2}  respectively.
	
	\begin{lemma} \label{lem:enriched:6}
		Let $\Context$ be a symmetric closed monoidal category and $\mathcal{T}$ be a one-object category enriched over $\Context$. Assume that a functor $F: \mathcal{T} \to \Context$ is an enriched functor. Then, $F$ coincides with the enriched functor obtained from a consecutive application of Lemma \ref{lem:enrch:1} and Lemma \ref{lem:enriched:2} to $F$.
	\end{lemma}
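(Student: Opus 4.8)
The plan is to verify that the two-step construction --- first pass to the flow via Lemma~\ref{lem:enrch:1}, then rebuild an enriched functor via Lemma~\ref{lem:enriched:2} --- returns $F$ unchanged. By Lemma~\ref{lem:enriched:5} the one-object category $\Time_{\TimeB}$ reconstructed along the way coincides with the original $\mathcal{T}=\Time$, so the rebuilt functor, call it $G$, and the original $F$ share both domain and codomain. Since $\mathcal{T}$ has a single object, which both functors send to $\Omega$, the whole statement reduces to checking that $F$ and $G$ agree on every morphism.

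First I would extract, via Lemma~\ref{lem:enrch:1}(i), the parametric form $F^\sharp : \TimeB \to \Endo(\Omega)$ of $F$, and take its left adjoint $\Psi_F : \TimeB \otimes \Omega \to \Omega$; this is precisely the flow that is fed into Lemma~\ref{lem:enriched:2}. The closed-monoidal adjunction gives the explicit formula $\Psi_F = \eval_\Omega \circ \paran{ F^\sharp \otimes \Omega }$, exactly as in the lower triangle of \eqref{eqn:enrch:1:1}. Next, an arbitrary morphism of $\mathcal{T}$ is presented as a $\Context$-point $a : 1_{\Context} \to \TimeB$, and Lemma~\ref{lem:enriched:2} assigns to it the endomorphism $G(a) = \Psi_F \circ \paran{ a \otimes \Omega } \circ \lambda_\Omega^{-1}$. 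Substituting the formula for $\Psi_F$ and contracting $\paran{ F^\sharp \otimes \Omega } \circ \paran{ a \otimes \Omega } = (F^\sharp \circ a) \otimes \Omega$ by bifunctoriality of $\otimes$, this simplifies to $G(a) = \eval_\Omega \circ \paran{ (F^\sharp \circ a) \otimes \Omega } \circ \lambda_\Omega^{-1}$.

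Finally I would identify this right-hand side with $F(a)$. By construction $F^\sharp$ is the hom-object component of the enriched functor $F$, so the underlying ordinary functor sends $a$ to the morphism $\Omega \to \Omega$ named by $F^\sharp \circ a : 1_{\Context} \to \Endo(\Omega)$ under the bijection \eqref{eqn:closed_mnd:2}; and that named morphism is exactly $\eval_\Omega \circ \paran{ (F^\sharp \circ a) \otimes \Omega } \circ \lambda_\Omega^{-1}$. Hence $G(a) = F(a)$ for all $a$, and together with agreement on the unique object this yields $G = F$. The one genuinely delicate step is this last identification: that the element-wise action $a \mapsto F(a)$ of the underlying functor is faithfully recovered from the enriched component $F^\sharp$ by naming under \eqref{eqn:closed_mnd:2}. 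This compatibility is exactly what links an enriched functor to its underlying ordinary functor, and it must be invoked explicitly; the remainder is coherence bookkeeping with the unitor $\lambda_\Omega$ and the bifunctoriality of $\otimes$.
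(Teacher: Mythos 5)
The paper states this lemma as a closing observation of Section \ref{sec:proof:enrch_fnctr_time} and supplies no proof of it, so there is nothing in the paper to compare your argument against; it must stand on its own, and it does. Your reduction is the natural one: Lemma \ref{lem:enriched:5} identifies the reconstructed one-object category with $\Time$, both functors send the unique object to $\Omega$, and on morphisms the claim follows from the chain
\[
G(a) \;=\; \Psi_F \circ ( a \otimes \Omega ) \circ \lambda_\Omega^{-1}
\;=\; \eval_\Omega \circ \bigl( (F^\sharp \circ a) \otimes \Omega \bigr) \circ \lambda_\Omega^{-1}
\;=\; F(a),
\]
where the middle equality uses bifunctoriality of $\otimes$ together with $\Psi_F = \eval_\Omega \circ (F^\sharp \otimes \Omega)$ (the lower triangle of \eqref{eqn:enrch:1:1}, i.e.\ Lemma \ref{lem:idl3v:1}), and the last equality is the compatibility between the enriched functor and its underlying ordinary functor. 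You correctly isolate that last step as the crux, and under the paper's conventions it is definitional: the paper defines an enriched functor as an ordinary functor whose hom-component is ``induced'', which means exactly that $F(a)$ is the morphism named by $F^\sharp \circ a$ under \eqref{eqn:closed_mnd:2}. Two small additions would make the write-up airtight. First, Lemma \ref{lem:enriched:2} takes a left $\TimeB$-action as input, so you should cite Lemma \ref{lem:enrch:1}(ii) to certify that $\Psi_F$ satisfies \eqref{eqn:enrch_fnctr_time:2}. Second, coincidence of \emph{enriched} functors also requires the hom-components to agree, i.e.\ $G^\sharp = F^\sharp$; this is immediate, since $G^\sharp$ is by construction the sharp adjoint of $\Psi_F$ and the sharp adjunction is a bijection, but it deserves a sentence --- and once it is noted, the uniqueness clause of Lemma \ref{lem:enrch:1}(i) would give an alternative one-line finish in place of the pointwise computation of $G(a)$.
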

	
	\subsection{Enriched functors from semigroups} \label{sec:enrich_time}
	
	The purpose of this short section is to review the conditions for being enriched functors and enriched natural transformations, when the the domain is a semigroup $\Time$ enriched over the codomain $\Context$. Thus according to the notation of Section \ref{sec:enrich_fnctr} this means  $\calY = \calM = \Context$ and $\calX = \Time$. In that case \eqref{eqn:def:enrich_nat:1} has only one morphism
	\begin{equation} \label{eqn:def:enrich_nat:Time:1}
		\omega_* : 1_{\Context} \to \IntHom{\Omega}{\Omega'}
	\end{equation}
	and \eqref{eqn:def:enrich_nat:2} becomes
	\begin{equation} \label{eqn:def:enrich_nat:Time:2}
		\begin{tikzcd}
			\TimeB \otimes 1_{\calC} \arrow{rr}{ G^\sharp \otimes \omega_* } && \Endo(\Omega') \otimes \IntHom{\Omega}{\Omega'} \arrow{d}{ \circ } \\
			\TimeB \arrow{u}{\cong} \arrow{d}[swap]{\cong} && \IntHom{\Omega}{\Omega'} \\
			1_{\calC} \otimes \TimeB \arrow{rr}[swap]{ \tilde{\omega}_{*} \otimes F^\sharp } && \IntHom{\Omega}{\Omega'} \otimes \Endo(\Omega) \arrow{u}[swap]{ \circ }
		\end{tikzcd}
	\end{equation}
	%
	\subsection{Proof of Theorem \ref{thm:1}} \label{sec:proof:thm:1}
	
	To prove the result, we shall utilize Lemma \ref{lem:enrch:1}. We have to show that $\Transfer{\Phi}$ satisfies the analog of \eqref{eqn:enrch_fnctr_time:2}, which are the following two commutative diagrams :
	\[\begin{tikzcd}
		1_{\Context} \otimes \IntHom{ \TimeB }{ \Omega } \arrow{dr}[swap]{ \start_{\star} \otimes \IntHom{ \TimeB }{ \Omega } } & \IntHom{ \TimeB }{ \Omega } \arrow[l, "\lambda_{[T;\Omega]}^{(L)-1}"'] \\
		&  \TimeB \otimes \IntHom{ \TimeB }{ \Omega } \arrow{u}[swap]{ \shift }
	\end{tikzcd} , \quad 
	\begin{tikzcd}
		\TimeB \otimes \TimeB \otimes \IntHom{ \TimeB }{ \Omega } \arrow{d}[swap]{ \add \otimes \IntHom{ \TimeB }{ \Omega } } \arrow{rr}{\TimeB \otimes \shift } && \TimeB \otimes \IntHom{ \TimeB }{ \Omega } \arrow{d}{ \shift } \\
		\TimeB \otimes \IntHom{ \TimeB }{ \Omega } \arrow{rr}{ \shift } && \IntHom{ \TimeB }{ \Omega }
	\end{tikzcd}\]
	We first verify the unit law. Let $\lambda^{(L)}$ and $\lambda^{(R)}$ denote the isomorphisms created by multiplication with $1_{\Context}$ on the left and right respectively. Then we have the following diagram :
	\[\begin{tikzcd} [column sep = large]
		\TimeB \otimes \IntHom{ \TimeB }{ \Omega } \arrow[bend right=30]{drr}[swap]{\cong} \arrow{rr}{ \TimeB \otimes \lambda^{(R)-1}_{ [T; \Omega] } } && \TimeB \otimes 1_{\Context} \otimes \IntHom{ \TimeB }{ \Omega } \arrow{d}[swap]{ \lambda^{(L)}_{ [T; \Omega]} \otimes \IntHom{\TimeB}{\Omega} } \arrow{rr}{ \TimeB \otimes \start_* \otimes \IntHom{ \TimeB }{ \Omega } } && \TimeB \otimes \TimeB \otimes \IntHom{ \TimeB }{ \Omega } \arrow{dll}{ \add \otimes \IntHom{ \TimeB }{ \Omega } } \arrow{d}[swap]{ \TimeB \otimes \shift } \\
		&& \TimeB \otimes \IntHom{ \TimeB }{ \Omega } \arrow{drr}[swap]{\eval_\Omega} && \TimeB \otimes \IntHom{ \TimeB }{ \Omega } \arrow{d}{\eval_\Omega} \\
		&& && \Omega
	\end{tikzcd}\]
	The commuting square has already been derived in \eqref{eqn:idl3v:2}. The two commuting triangles are trivial consequences of the unitor morphisms. Now note that the clockwise (CW) path from $\TimeB \otimes 1_{\Context} \otimes \IntHom{ \TimeB }{ \Omega }$ to $\Omega$ is the left-adjoint of the counter-clockwise (CCW) path in the unit rule. This path is isomorphic to the CCW path from $\TimeB \otimes \IntHom{ \TimeB }{ \Omega }$ to $\Omega$, which is a left-adjoint to $\lambda_{[T;\Omega]}^{(L)-1}$. Thus we have proved the commutation of the left-adjoints of the paths in the unit rule.
	
	We next prove the second diagram. We shall show that the left adjoints to the two paths in the diagram are equal. These left adjoints have already been determined in \eqref{eqn:proof:thm:1:1} and \eqref{eqn:proof:thm:1:5} respectively. We shall need two identities for the proof. The first is a trivial application of the associativity of $\add$ :
	\begin{equation} \label{eqn:proof:thm:1:2}
		\begin{tikzcd}
			\TimeB \otimes \TimeB \otimes \TimeB \otimes \IntHom{ \TimeB }{ \Omega } \arrow{d}[swap]{ \TimeB \otimes \add \otimes \IntHom{ \TimeB }{ \Omega } } \arrow{rr}{ \add \otimes \TimeB \otimes \IntHom{ \TimeB }{ \Omega } } && \TimeB \otimes \TimeB \otimes \IntHom{ \TimeB }{ \Omega } \arrow{d}{ \add \otimes \TimeB \otimes  \IntHom{ \TimeB }{ \Omega } } \\
			\TimeB \otimes \TimeB \otimes \IntHom{ \TimeB }{ \Omega } \arrow{rr}{ \TimeB \otimes \shift } && \TimeB \otimes \IntHom{ \TimeB }{ \Omega }
		\end{tikzcd}
	\end{equation}
	The second is a trivial application of the bifunctoriality of $\otimes$ :
	\begin{equation} \label{eqn:proof:thm:1:3}
		\begin{tikzcd}
			\TimeB \otimes \TimeB \otimes \TimeB \otimes  \IntHom{ \TimeB }{ \Omega } \arrow{d}[swap]{ \add \otimes\TimeB \otimes \IntHom{ \TimeB }{ \Omega } } \arrow{rr}{ \TimeB \otimes \TimeB \otimes \shift } && \TimeB \otimes \TimeB \otimes \IntHom{ \TimeB }{ \Omega } \arrow{d}{ \add \otimes \IntHom{ \TimeB }{ \Omega } } \\
			\TimeB \otimes \TimeB \otimes \IntHom{ \TimeB }{ \Omega } \arrow{rr}{ \TimeB \otimes \shift } && \TimeB \otimes \IntHom{ \TimeB }{ \Omega }
		\end{tikzcd}
	\end{equation}
	Three repeated applications of \eqref{eqn:idl3v:2} gives :
	\[\begin{tikzcd} [scale cd = 0.7]
		&& && \TimeB \otimes \TimeB \otimes \IntHom{ \TimeB }{ \Omega } \arrow{d}[swap]{ \add \otimes \IntHom{ \TimeB }{ \Omega } } \arrow{rr}{ \TimeB \otimes \shift } && \TimeB \otimes \IntHom{ \TimeB }{ \Omega } \arrow{d}{\eval_{\Omega}} \\
		&& \TimeB \otimes \TimeB \otimes \IntHom{ \TimeB }{ \Omega } \arrow{d}[swap]{ \add \otimes \IntHom{ \TimeB }{ \Omega } } \arrow{rr}{ \TimeB \otimes \shift } && \TimeB \otimes \IntHom{ \TimeB }{ \Omega } \arrow{rr}{\eval_{\Omega}} && \Omega \\
		\TimeB \otimes \TimeB \otimes \IntHom{ \TimeB }{ \Omega } \arrow{d}[swap]{ \TimeB \otimes \shift } \arrow{rr}{ \add \otimes \IntHom{ \TimeB }{ \Omega } } && \TimeB \otimes \IntHom{ \TimeB }{ \Omega } \arrow[bend right=20]{urrrr}[swap]{\eval_{\Omega}} \\
		\TimeB \otimes \IntHom{ \TimeB }{ \Omega } \arrow[bend right=40]{uurrrrrr}[swap]{\eval_{\Omega}}
	\end{tikzcd}\]
	Adding the commutations of \eqref{eqn:proof:thm:1:2} and \eqref{eqn:proof:thm:1:3} gives :
	\begin{equation} \label{eqn:proof:thm:1:4}
		\begin{tikzcd} [scale cd = 0.7]
			\TimeB \otimes \TimeB \otimes \TimeB \otimes \IntHom{ \TimeB }{ \Omega } \arrow{dd}[swap]{ \TimeB \otimes \add \otimes \IntHom{ \TimeB }{ \Omega } } \arrow{drr}{ \add \otimes \TimeB \otimes \IntHom{ \TimeB }{ \Omega } } \arrow{rrrr}{ \TimeB \otimes \TimeB \otimes \shift } && && \TimeB \otimes \TimeB \otimes \IntHom{ \TimeB }{ \Omega } \arrow{d}[swap]{ \add \otimes \IntHom{ \TimeB }{ \Omega } } \arrow{rr}{ \TimeB \otimes \shift } && \TimeB \otimes \IntHom{ \TimeB }{ \Omega } \arrow{d}{\eval_{\Omega}} \\
			&& \TimeB \otimes \TimeB \otimes \IntHom{ \TimeB }{ \Omega } \arrow{d}[swap]{ \add \otimes \IntHom{ \TimeB }{ \Omega } } \arrow{rr}{ \TimeB \otimes \shift } && \TimeB \otimes \IntHom{ \TimeB }{ \Omega } \arrow{rr}{\eval_{\Omega}} && \Omega \\
			\TimeB \otimes \TimeB \otimes \IntHom{ \TimeB }{ \Omega } \arrow{d}[swap]{ \TimeB \otimes \shift } \arrow{rr}{ \add \otimes \IntHom{ \TimeB }{ \Omega } } && \TimeB \otimes \IntHom{ \TimeB }{ \Omega } \arrow[bend right=20]{urrrr}[swap]{\eval_{\Omega}} \\
			\TimeB \otimes \IntHom{ \TimeB }{ \Omega } \arrow[bend right=40]{uurrrrrr}[swap]{\eval_{\Omega}}
		\end{tikzcd}
	\end{equation}
	The resulting commutation in the peripheral loop of \eqref{eqn:proof:thm:1:4} confirms the equality of \eqref{eqn:proof:thm:1:1} and \eqref{eqn:proof:thm:1:5}. This completes the proof of  \ref{thm:1}. \qed 
	\subsection{Proof of Theorem \ref{thm:8}} \label{sec:proof:thm:8}
	
	The verification of compositionality and associativity in the two categories in claim (i) is routine and will be ommitted. The bijection between the objects claimed in statement (iii) have already been proved in Lemma \ref{lem:enrch:1}. Statements (iv) and (v) follow directly from (i), (ii) and (iii). Thus it remains to prove Claim (ii). This requires establishing a correspondence between morphisms in these two categories, that also preserves composition. 
	
	We will set up a functor $F:\FlowCat{\Context} \to \EnrichSGActCat{\Context}$ by
	\begin{itemize}
		\item  $\Phi \mapsto \tilde{\Phi}$ on objects.
		\item $(h:\Phi \to \Psi) \mapsto (h^\sharp:\tilde{\Phi} \to \tilde{\Psi})$ on morphisms.
	\end{itemize}
	
	To verify $F$ is well-defined, we need the following lemma, which implies that a morphism $h:\Omega \to \Omega'$ in $\Context$ is a morphism $h:\Phi \to \Psi$ in $\FlowCat{\Context}$ if and only if $h^\sharp: 1_\mathcal{C} \to \IntHom{\Omega}{\Omega'}$ defines a morphism $\tilde{\Phi} \to \tilde{\Psi}$ in $\EnrichSGActCat{\Context}$:
	
	\begin{lemma}
		Let $\Phi : \TimeB \otimes \Omega \to \Omega$ and $\Phi':\TimeB \otimes \Omega' \to \Omega'$ be morphisms in a context category $\mathcal{C}$. If $h:\Omega \to \Omega'$ and $h^\sharp: 1_\mathcal{C} \to \IntHom{\Omega}{\Omega'}$ are adjoints, then the morphism
		\[
		\TimeB \otimes \Omega \xrightarrow[]{\Phi} \Omega \xrightarrow[]{h} \Omega'
		\]
		is adjoint to 
		\[
		\TimeB \simeq 1_{\mathcal{C}}\otimes \TimeB \xrightarrow[]{h^\sharp \otimes\Phi^\sharp} \IntHom{\Omega}{\Omega'}\otimes \Endo{\Omega} \xrightarrow[]{\circ} \IntHom{\Omega}{\Omega}.
		\]
		Similarly, the morphism
		\[
		\TimeB \otimes \Omega \xrightarrow[]{\TimeB \otimes h} \TimeB \otimes \Omega' \xrightarrow[]{\Phi'} \Omega'
		\]
		is adjoint to 
		\[
		\TimeB \simeq \TimeB \otimes 1_{\mathcal{C}}\xrightarrow[]{\Phi'^\sharp\otimes h^\sharp} \Endo{\Omega'}\otimes\IntHom{\Omega}{\Omega'}  \xrightarrow[]{\circ} \IntHom{\Omega}{\Omega'}.
		\]
	\end{lemma}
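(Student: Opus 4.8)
The plan is to exploit the universal property of the sharp adjoint: a morphism $g : \TimeB \to \IntHom{\Omega}{\Omega'}$ is the sharp adjoint of $f : \TimeB \otimes \Omega \to \Omega'$ precisely when $f = \eval_{\Omega,\Omega'} \circ \paran{ g \otimes \Omega }$, the counit relation of the tensor--hom adjunction (the sharp analogue of Lemma \ref{lem:idl3v:1}). So for the first assertion it suffices to verify that the proposed morphism $g := \circ \circ \paran{ h^\sharp \otimes \Phi^\sharp } \circ \lambda_{\TimeB}^{-1}$ satisfies $\eval_{\Omega,\Omega'} \circ \paran{ g \otimes \Omega } = h \circ \Phi$, and symmetrically for the second. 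The three tools I would invoke are the evaluation laws $\eval_{\Omega,\Omega} \circ \paran{ \Phi^\sharp \otimes \Omega } = \Phi$ and $\eval_{\Omega,\Omega'} \circ \paran{ h^\sharp \otimes \Omega } = h$ (the latter up to the unitor $1_{\Context}\otimes \Omega \cong \Omega$), together with the defining property of the composition morphism, namely that $\circ$ is the adjoint of the iterated evaluation, so that $\eval_{\Omega,\Omega'} \circ \paran{ \circ \otimes \Omega } = \eval_{\Omega,\Omega'} \circ \paran{ \IntHom{\Omega}{\Omega'} \otimes \eval_{\Omega,\Omega} }$.

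For the first claim I would form $\eval_{\Omega,\Omega'} \circ \paran{ g \otimes \Omega }$, distribute the dangling factor $\Omega$ through $g$, and then use the defining property of $\circ$ to replace $\eval \circ \paran{ \circ \otimes \Omega }$ by the iterated evaluation $\eval \circ \paran{ \IntHom{\Omega}{\Omega'} \otimes \eval }$. By the interchange law for $\otimes$, the inner block $\paran{ \IntHom{\Omega}{\Omega'} \otimes \eval } \circ \paran{ (h^\sharp \otimes \Phi^\sharp) \otimes \Omega }$ factors as $h^\sharp \otimes \paran{ \eval \circ (\Phi^\sharp \otimes \Omega) } = h^\sharp \otimes \Phi$, where I have used the evaluation law for $\Phi^\sharp$. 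Applying the outer $\eval$ and then the evaluation law for $h^\sharp$ collapses the composite to $h \circ \Phi$, pre-composed only with a string of unitors that is the identity. The second claim is the mirror image: here one uses the other composition morphism $\circ : \Endo(\Omega') \otimes \IntHom{\Omega}{\Omega'} \to \IntHom{\Omega}{\Omega'}$, recovers $\Phi'$ via the evaluation law for $\Phi'^\sharp$, and applies functoriality $\TimeB \otimes \paran{ h \circ \lambda_\Omega } = \paran{ \TimeB \otimes h } \circ \paran{ \TimeB \otimes \lambda_\Omega }$ to arrive at $\Phi' \circ \paran{ \TimeB \otimes h }$.

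The conceptual content of both identities is simply that, under the tensor--hom adjunction, composition of internal morphisms corresponds to ordinary composition; all the labor lies in the bookkeeping. The main obstacle I anticipate is keeping the coherence isomorphisms straight: the two adjoints are built with the explicit unitors $\lambda_{\TimeB}^{-1}$ and $\rho_{\TimeB}^{-1}$, and each use of the interchange law reshuffles the bracketing of the triple tensor $\TimeB \otimes 1_{\Context} \otimes \Omega$. I would dispatch this once and for all by Mac Lane's coherence theorem, so that every composite of structural isomorphisms between two fixed bracketings is the canonical one and may be suppressed; with that convention the two displayed composites reduce to $h \circ \Phi$ and $\Phi' \circ \paran{ \TimeB \otimes h }$ exactly, which is the assertion.
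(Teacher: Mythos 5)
Your proposal is correct, and there is in fact nothing in the paper to compare it against: this lemma is stated inside the proof of Theorem \ref{thm:8} as an auxiliary fact and is never proved there (the text immediately moves on to the compositionality of the functor $F$ via \eqref{eqn:EnrichSGActCat:compose}). Your argument supplies exactly the missing verification, and it does so with the same toolkit the paper uses for its other adjoint computations: the counit characterization of adjoints (your ``sharp analogue'' of Lemma \ref{lem:idl3v:1}, which is legitimate since the adjunction bijection \eqref{eqn:closed_mnd:1} is invertible, so checking $\eval \circ \paran{g \otimes \Omega} = h \circ \Phi$ does identify $g$ as the sharp adjoint), the defining property of the internal composition $\circ$ as the adjoint of iterated evaluation (stated in the paper right after \eqref{eqn:enrch_fnctr_time:1}), bifunctoriality of $\otimes$, and coherence for the unitors/associators. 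Two cosmetic points worth noting if this were to be written out: first, you use the convention $\eval : \IntHom{y}{z} \otimes y \to z$ while the paper's formal definition \eqref{eqn:closed_mnd:3} puts the argument on the left; the paper itself switches to your convention in \eqref{eqn:enrch:1:1} and in its definition of $\circ$, and in a symmetric monoidal category the two are interchangeable, so this is harmless but should be fixed once and declared. Second, the lemma as printed has a typo (the codomain of the first displayed composite should be $\IntHom{\Omega}{\Omega'}$, not $\IntHom{\Omega}{\Omega}$); your reconstruction silently uses the correct codomain, which is the right reading. The one step you compress --- cancelling $\lambda_{\TimeB \otimes \Omega} \circ \paran{\lambda_{\TimeB}^{-1} \otimes \Omega}$ in the first claim and $\paran{\TimeB \otimes \lambda_\Omega} \circ \paran{\rho_{\TimeB}^{-1} \otimes \Omega}$ in the second --- is indeed exactly what Mac Lane coherence (respectively the triangle axiom) licenses, so the appeal is not hand-waving but a genuine reduction.
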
	
	By the commutation in \eqref{eqn:EnrichSGActCat:compose}, we have
	\[
	F(h'\circ h) = F(h') \circ F(h).
	\]
	Thus, $F$ is a functor and it is clear that $F$ has inverse.
	This completes the proof of Theorem \ref{thm:8}. \qed
	\subsection{Proof of Theorem \ref{thm:2}} \label{sec:proof:thm:2}
	
	To prove the result, we shall utilize Lemma \ref{lem:enrch:1}. We have to show that $\Transfer{\Phi}$ satisfies the analog of \eqref{eqn:enrch_fnctr_time:2}. We need to show that the following diagrams commute :
	\[\begin{tikzcd}
		1_{\Context} \otimes \IntHom{ \TimeB }{ \Omega } \arrow{dr}[swap]{ \start_{\IntHom{ \TimeB }{ \Omega }} \otimes \IntHom{ \TimeB }{ \Omega } } & \TimeB \arrow{l}{ \lambda } \\
		& \TimeB \otimes \IntHom{ \TimeB }{ \Omega } \arrow{u}{ \Transfer{\Phi} }
	\end{tikzcd} , \quad \begin{tikzcd}
		\TimeB \otimes \TimeB \otimes \IntHom{ \TimeB }{ \Omega } \arrow{d}[swap]{ \add \otimes \IntHom{ \TimeB }{ \Omega } } \arrow{rr}{\TimeB \otimes \Transfer{\Phi} } && \TimeB \otimes \IntHom{ \TimeB }{ \Omega } \arrow{d}{ \Transfer{\Phi} } \\
		\TimeB \otimes \IntHom{ \TimeB }{ \Omega } \arrow{rr}{ \Transfer{\Phi} } && \IntHom{ \TimeB }{ \Omega }
	\end{tikzcd}\]
	We begin with the second diagram. We shall show that the adjoint to the two paths in the diagram are equal. By another use of Lemma \ref{lem:idl3v:1} and \eqref{eqn:idl3v:2}, these adjoints are :
	
	\begin{equation} \label{eqn:proof:thm:2:1}
		\begin{aligned}
			&\TimeB \otimes \TimeB \otimes \TimeB \otimes \IntHom{\TimeB}{\Omega} \xrightarrow{\TimeB \otimes \TimeB \otimes \Transfer{\Phi}}\TimeB \otimes \TimeB \otimes \IntHom{\TimeB}{\Omega} \xrightarrow{ \TimeB \otimes \Transfer{\Phi}}\TimeB \otimes \IntHom{\TimeB}{\Omega} \xrightarrow{\eval_{\Omega}} \Omega\\
			&\TimeB \otimes \TimeB \otimes \TimeB \otimes \IntHom{\TimeB}{\Omega} \xrightarrow{\TimeB \otimes \mu \otimes \IntHom{\TimeB}{\Omega}}\TimeB \otimes \TimeB \otimes \IntHom{\TimeB}{\Omega} \xrightarrow{ \TimeB \otimes \Transfer{\Phi}}\TimeB \otimes \IntHom{\TimeB}{\Omega} \xrightarrow{\eval_{\Omega}} \Omega
		\end{aligned}
	\end{equation}
	We shall need the following three identities in the proof : 
	\begin{equation} \label{eqn:proof:thm:2:5}
		\begin{tikzcd}
			\TimeB \otimes \TimeB \otimes \TimeB \otimes \IntHom{\TimeB}{\Omega} \arrow{rrrr}{ \TimeB \otimes \TimeB \otimes \Transfer{\Phi} } \arrow{d}[swap]{ \TimeB \otimes \TimeB \otimes \eval_{\Omega} } &&&& \TimeB \otimes \TimeB \otimes \IntHom{\TimeB}{\Omega} \arrow{d}{ \swap \otimes \IntHom{\TimeB}{\Omega} } \\
			\TimeB \otimes \TimeB \otimes \Omega \arrow{rr}{ \TimeB \otimes \Phi^\sharp } && \TimeB \otimes \Omega && \TimeB \otimes \TimeB \otimes \IntHom{\TimeB}{\Omega} \arrow{ll}[swap]{ \TimeB \otimes \eval_{\Omega} } 
		\end{tikzcd}
	\end{equation}
	\begin{equation} \label{eqn:proof:thm:2:3}
		\begin{tikzcd}
			\TimeB \otimes \TimeB \otimes \TimeB \otimes \IntHom{\TimeB}{\Omega} \arrow{d}[swap]{ \TimeB \otimes \add \otimes \IntHom{\TimeB}{\Omega} } \arrow{rrrr}{ \TimeB \otimes \TimeB \otimes \eval_{\Omega} } &&&& \TimeB \otimes \TimeB \otimes \Omega \arrow{d}{\add \otimes \Omega} \\
			\TimeB \otimes \TimeB \otimes \IntHom{\TimeB}{\Omega} \arrow{rr}[swap]{ \swap \otimes \IntHom{\TimeB}{\Omega} } && \TimeB \otimes \TimeB \otimes \IntHom{\TimeB}{\Omega} \arrow{rr}[swap]{ \TimeB \otimes \eval_{\Omega} } && \TimeB \otimes \Omega
		\end{tikzcd}
	\end{equation}
	\begin{equation} \label{eqn:proof:thm:2:4}
		\begin{tikzcd}
			\TimeB \otimes \TimeB \otimes \IntHom{\TimeB}{\Omega} \arrow{d}[swap]{ \TimeB \otimes \Transfer{\Phi} } \arrow{rr}{ \swap \otimes \IntHom{\TimeB}{\Omega} } && \TimeB \otimes \TimeB \otimes \IntHom{\TimeB}{\Omega} \arrow{d}{ \TimeB \otimes \eval_{\Omega} } \\
			\TimeB \otimes \IntHom{\TimeB}{\Omega} \arrow{r}[swap]{\eval_{\Omega}} & \Omega & \TimeB \otimes \Omega \arrow{l}{ \Phi }
		\end{tikzcd}
	\end{equation}

	The commutations in \eqref{eqn:idl3v:2}, \eqref{eqn:idl3v:3} and \eqref{eqn:proof:thm:2:5} combine to give
	\[\begin{tikzcd} [scale cd = 0.8, column sep = small]
		\TimeB \otimes \TimeB \otimes \TimeB \otimes \IntHom{\TimeB}{\Omega} \arrow{rrrrr}{ \TimeB \otimes \TimeB \otimes \Transfer{\Phi} } \arrow{dr}[swap]{ \TimeB \otimes \TimeB \otimes \eval_{\Omega} } &&&&& \TimeB \otimes \TimeB \otimes \IntHom{\TimeB}{\Omega} \arrow{d}[swap]{ \swap \otimes \IntHom{\TimeB}{\Omega} } \arrow[bend left=10]{dr}{ \TimeB \otimes \Transfer{\Phi} } \\
		& \TimeB \otimes \TimeB \otimes \Omega \arrow{d}{ \add \otimes \Omega } \arrow{rr}{ \TimeB \otimes \Phi } && \TimeB \otimes \Omega \arrow[bend right=5]{drrr}{\Phi} && \TimeB \otimes \TimeB \otimes \IntHom{\TimeB}{\Omega} \arrow{ll}[swap]{ \TimeB \otimes \eval_{\Omega} } & \TimeB \otimes \IntHom{\TimeB}{\Omega} \arrow{d}{\eval_{\Omega}} \\
		& \TimeB \otimes \Omega \arrow{rrrrr}[pos=0.3]{ \Phi } && && & \Omega 
	\end{tikzcd}\]
	The commutation in \eqref{eqn:proof:thm:2:3} can now be added to get
	\[\begin{tikzcd} [scale cd = 0.8, column sep = small]
		\TimeB \otimes \TimeB \otimes \TimeB \otimes \IntHom{\TimeB}{\Omega} \arrow[bend right=10]{dddd}[swap]{ \TimeB \otimes \add \otimes \IntHom{\TimeB}{\Omega} } \arrow{rrrrr}{ \TimeB \otimes \TimeB \otimes \Transfer{\Phi} } \arrow{dr}[swap]{ \TimeB \otimes \TimeB \otimes \eval_{\Omega} } &&&&& \TimeB \otimes \TimeB \otimes \IntHom{\TimeB}{\Omega} \arrow{d}[swap]{ \swap \otimes \IntHom{\TimeB}{\Omega} } \arrow[bend left=10]{dr}{ \TimeB \otimes \Transfer{\Phi} } \\
		& \TimeB \otimes \TimeB \otimes \Omega \arrow{d}{ \add \otimes \Omega } \arrow{rr}{ \TimeB \otimes \Phi } && \TimeB \otimes \Omega \arrow[bend right=5]{drrr}{\Phi} && \TimeB \otimes \TimeB \otimes \IntHom{\TimeB}{\Omega} \arrow{ll}[swap]{ \TimeB \otimes \eval_{\Omega} } & \TimeB \otimes \IntHom{\TimeB}{\Omega} \arrow{d}{\eval_{\Omega}} \\
		& \TimeB \otimes \Omega \arrow{rrrrr}[pos=0.3]{ \Phi } && && & \Omega \\
		& \TimeB \otimes \TimeB \otimes \IntHom{\TimeB}{\Omega} \arrow{u}[swap]{ \TimeB \otimes \eval_{\Omega} } \\
		\TimeB \otimes \TimeB \otimes \IntHom{\TimeB}{\Omega} \arrow{ur}[swap]{ \swap \otimes \IntHom{\TimeB}{\Omega} }
	\end{tikzcd}\]
	The commutation in \eqref{eqn:proof:thm:2:4} can now be added to get  the large diagram
	\begin{equation} \label{eqn:proof:thm:2:2}
		\begin{tikzcd} [scale cd = 0.8, column sep = small]
			\TimeB \otimes \TimeB \otimes \TimeB \otimes \IntHom{\TimeB}{\Omega} \arrow[bend right=10]{dddd}[swap]{ \TimeB \otimes \add \otimes \IntHom{\TimeB}{\Omega} } \arrow{rrrrr}{ \TimeB \otimes \TimeB \otimes \Transfer{\Phi} } \arrow{dr}[swap]{ \TimeB \otimes \TimeB \otimes \eval_{\Omega} } &&&&& \TimeB \otimes \TimeB \otimes \IntHom{\TimeB}{\Omega} \arrow{d}[swap]{ \swap \otimes \IntHom{\TimeB}{\Omega} } \arrow[bend left=10]{dr}{ \TimeB \otimes \Transfer{\Phi} } \\
			& \TimeB \otimes \TimeB \otimes \Omega \arrow{d}{ \add \otimes \Omega } \arrow{rr}{ \TimeB \otimes \Phi } && \TimeB \otimes \Omega \arrow[bend right=5]{drrr}{\Phi} && \TimeB \otimes \TimeB \otimes \IntHom{\TimeB}{\Omega} \arrow{ll}[swap]{ \TimeB \otimes \eval_{\Omega} } & \TimeB \otimes \IntHom{\TimeB}{\Omega} \arrow{d}{\eval_{\Omega}} \\
			& \TimeB \otimes \Omega \arrow{rrrrr}[pos=0.3]{ \Phi } && && & \Omega \\
			& \TimeB \otimes \TimeB \otimes \IntHom{\TimeB}{\Omega} \arrow{u}[swap]{ \TimeB \otimes \eval_{\Omega} } \\
			\TimeB \otimes \TimeB \otimes \IntHom{\TimeB}{\Omega} \arrow{ur}[swap]{ \swap \otimes \IntHom{\TimeB}{\Omega} } \arrow{rrr}{ \TimeB \otimes \Transfer{\Phi} } &&& \TimeB \otimes \IntHom{\TimeB}{\Omega} \arrow{uurrr}[swap]{ \eval_{\Omega} }
		\end{tikzcd}
	\end{equation}
	Note that the outer periphery of this diagram reflects the equality of the two paths in \eqref{eqn:proof:thm:2:1}.
	
	As for the unit law, we have the following diagram:
	\[\begin{tikzcd} [column sep = large, scale cd = 0.7]
		& {\TimeB \otimes 1_{\Context} \otimes \IntHom{ \TimeB }{ \Omega }} \arrow[lddd, "\TimeB \otimes \lambda"', bend right=49] \arrow[d, "{\swap \otimes \IntHom{ \TimeB }{ \Omega }}"] \arrow[rr, "{\TimeB \otimes \start \otimes \IntHom{ \TimeB }{ \Omega }}"] & & \TimeB \otimes \TimeB \otimes \IntHom{ \TimeB }{ \Omega } \arrow[d, "{\swap \otimes \IntHom{ \TimeB }{ \Omega }}"] \arrow[rr, "\TimeB \otimes \Transfer{\Phi}"] & & {\TimeB \otimes \IntHom{ \TimeB }{ \Omega }} \arrow[ddd, "\eval_{\Omega}"] \\
		& {1_{\Context} \otimes \TimeB \otimes \IntHom{ \TimeB }{ \Omega }} \arrow[ldd, "{\lambda \otimes \IntHom{ \TimeB }{ \Omega }}"'] \arrow[d, "1_{\Context} \otimes \eval_{\Omega}"] \arrow[rr, "{\start \otimes \TimeB \otimes \IntHom{ \TimeB }{ \Omega }}"] & & {\TimeB \otimes \TimeB \otimes \IntHom{ \TimeB }{ \Omega }} \arrow[d, "\TimeB \otimes \eval_{\Omega}"] & & \\
		& 1_{\Context} \otimes \Omega \arrow[rr, "\start \otimes \Omega"] \arrow[rrrrd, "\lambda"'] & & \TimeB \otimes \Omega \arrow[rrd, "\Phi"] & & \\
		{\TimeB \otimes \IntHom{ \TimeB }{ \Omega }} \arrow[rrrrr, "\eval_{\Omega}"] & & & & & \Omega 
	\end{tikzcd}\]
	The outermost clockwise path from $\TimeB \otimes \TimeB \otimes \IntHom{ \TimeB }{ \Omega }$ to $\Omega$ is the left adjoint of the transfer operator $\Transfer{\Phi}$ by Lemma \ref{lem:idl3v:1}. The counter clockwise path from $\TimeB \otimes \TimeB \otimes \IntHom{ \TimeB }{ \Omega }$ to $\Omega$ is the also left adjoint of $\Transfer{\Phi}$ from the definition \eqref{eqn:def:transfer}. This explains the top-right commuting square. All the other commuting triangles and squares follow directly from the consistency properties of $\start$ and $\swap$. Now note two peripheral paths in this diagram are exactly the left adjoints of the two paths in the unit law identity, again by Lemma \ref{lem:idl3v:1}. This completes the proof of Theorem \ref{thm:2}. \qed 
	\subsection{Proof of Theorem \ref{thm:eqlz_dyn}} \label{sec:proof:eqlz_dyn}
	
	Our first observation is that the following diagram commutes.
	\begin{equation} \label{eqn:pxdi0}
		\begin{tikzcd}
			\TimeB \otimes E \arrow{rr}{\TimeB \otimes q} && \TimeB \otimes X \arrow[shift left=1]{rr}{F} \arrow[shift right=1]{rr}[swap]{G} && X
		\end{tikzcd}
	\end{equation}
	To see why we expand the above diagram into :
	\[\begin{tikzcd}
		\TimeB \otimes E \arrow{rr}{\TimeB \otimes q} && \TimeB \otimes X \arrow[shift left=1]{rr}{\TimeB \otimes F^\sharp} \arrow[shift right=1]{rr}[swap]{\TimeB \otimes G^\sharp} \arrow[bend right=20, shift left=1]{drr}{F} \arrow[bend right=20, shift right=1]{drr}[swap]{G} && \TimeB \otimes \IntHom{\TimeB}{X} \arrow{d}{\eval} \\
		&& && X
	\end{tikzcd}\]
	The top commutation follows from the equalizer property of $q$. The two commuting loops on the right follow from \eqref{eqn:idl3v:4}. This completes the proof of \eqref{eqn:pxdi0}.
	
	The next important observation is the following commutation
	\begin{equation} \label{eqn:sdlrx6}
		\begin{tikzcd}
			\TimeB \otimes E \arrow{rr}{\TimeB \otimes q} && \TimeB \otimes X \arrow[shift left=1]{r}{F} \arrow[shift right=1]{r}[swap]{G} & X \arrow[shift left=1]{r}{F^\sharp} \arrow[shift right=1]{r}[swap]{G^\sharp} & \IntHom{\TimeB}{X}
		\end{tikzcd}
	\end{equation}
	The first commutation is already covered in \eqref{eqn:pxdi0}. We thus need to show that the following two paths have the same adjoints :
	\[\begin{tikzcd}
		\TimeB \otimes E \arrow{rr}{\TimeB \otimes q} && \TimeB \otimes X \arrow{r}{F} & X \arrow{r}{F^\sharp} & \IntHom{\TimeB}{X} \\
		\TimeB \otimes E \arrow{rr}{\TimeB \otimes q} && \TimeB \otimes X \arrow{r}{G} & X \arrow{r}{G^\sharp} & \IntHom{\TimeB}{X}
	\end{tikzcd}\]
	Call these paths $\tilde{F}$ and $\tilde{G}$ respectively. Again by \eqref{eqn:idl3v:4}, it is equivalent to show that 
	\[ \eval \circ \paran{ \TimeB \otimes \tilde{F}} = \eval \circ \paran{ \TimeB \otimes \tilde{G}} . \]
	This follows from the following diagram
	\[\begin{tikzcd}
		\TimeB \otimes \TimeB \otimes E \arrow{d}{ \add \otimes E } \arrow{rr}{\TimeB \otimes \TimeB \otimes q} && \TimeB \otimes \TimeB \otimes X \arrow{d}{ \add \otimes X } \arrow{rr}{ \TimeB \otimes F} && \TimeB \otimes X \arrow{rr}{ \TimeB \otimes F^\sharp} \arrow[d, "F"] && \TimeB \otimes \IntHom{\TimeB}{X} \arrow[dll, "\eval"] \\
		\TimeB \otimes E \arrow{rr}{ \TimeB \otimes q} && \TimeB \otimes X \arrow[shift left=1]{rr}{F} \arrow[shift right=1]{rr}[swap]{G} && X  \\
		\TimeB \otimes \TimeB \otimes E \arrow{u}[swap]{ \add \otimes E } \arrow{rr}{\TimeB \otimes \TimeB \otimes q} && \TimeB \otimes \TimeB \otimes X \arrow{u}[swap]{ \add \otimes X } \arrow{rr}{ \TimeB \otimes G} && \TimeB \otimes X \arrow{rr}{ \TimeB \otimes G^\sharp} \arrow[u, "G"'] && \TimeB \otimes \IntHom{\TimeB}{X} \arrow[ull, "\eval"']
	\end{tikzcd}\]
	This completes the proof of \eqref{eqn:sdlrx6}.
	
	The commutation in \eqref{eqn:sdlrx6} along with the universality of the equalizer $q$ implies the existence of a unique morphism shown below as a dotted arrow
	\[\begin{tikzcd}
		\TimeB \otimes E \arrow[dotted, drr, "!"'] \arrow{rr}{\TimeB \otimes q} && \TimeB \otimes X \arrow[shift left=1]{r}{F} \arrow[shift right=1]{r}[swap]{G} & X \\
		&& E \arrow[bend right=20]{ur}[swap]{q}
	\end{tikzcd}\]
	This morphism is precisely the morphism $F\cap G$. Note that the commutation in \eqref{eqn:eqlz_dyn:3} is automatically borne by the figure above. 
	
	We next prove the second claim. To show that $F\cap G$ is a left action, we first verify its compositionality. 
	\[\begin{tikzcd}
		\TimeB \otimes \TimeB \otimes X \arrow[rrr, "\add \otimes X"] \arrow[ddd, shift left = 1, "\TimeB \otimes F"] \arrow[ddd, shift right = 1, "\TimeB \otimes G"'] & & & \TimeB \otimes X \arrow[ddd, shift left = 1, "F"] \arrow[ddd, shift right = 1, "G"'] \\
		& \TimeB \otimes \TimeB \otimes E \arrow[r, "\add \otimes E"] \arrow[ul, "\TimeB \otimes \TimeB \otimes q"'] \arrow[d, "\TimeB \otimes F\cap G"] & \TimeB \otimes E \arrow[dotted, d, "F\cap G"] \arrow[ur, "\TimeB \otimes q"] \\
		& \TimeB \otimes E \arrow[dl, "\TimeB \otimes q"] \arrow[r, "F\cap G"] & E \arrow[dr, "q"] \\
		\TimeB \otimes X \arrow[rrr, shift left = 1, "F"] \arrow[rrr, shift right = 1, "G"'] & & & X
	\end{tikzcd}\]
	In the diagram above all the commuting loops not involving the dotted arrow has already been established. The dotted arrow exists uniquely by the equalizer property of $q$. The unit law follows from the following diagram:
	\[\begin{tikzcd}
		E_{\Phi} \arrow[d, "q"'] \arrow[rr, "\lambda_{E_{\Phi}}^{(L)-1}"] && 1_{\Context} \otimes  E_{\Phi} \arrow[d, "1_{\Context} \otimes q"'] \arrow[rr, "\start_* \otimes \IntHom{ \TimeB }{ \Omega }"] && \TimeB \otimes E_{\Phi} \arrow[d, "\TimeB \otimes q"'] \arrow[rr, "\shiftF{\Phi}"] && E_{\Phi} \arrow[d, "q"'] \\
		\IntHom{ \TimeB }{ \Omega } \arrow[rr, "\lambda_{[T,\Omega]}^{(L)-1}"] && 1_{\Context} \otimes  \IntHom{ \TimeB }{ \Omega } \arrow[rr, "\start_* \otimes \IntHom{ \TimeB }{ \Omega }"] && \TimeB \otimes \IntHom{ \TimeB }{ \Omega } \arrow[rr, "\shift"] && \IntHom{ \TimeB }{ \Omega }
	\end{tikzcd}\]
	This completes the proof of Theorem \ref{thm:eqlz_dyn}. \qed
	
	\subsection{Proof of Theorem \ref{thm:3}} \label{sec:proof:thm:3}
	
	The first four claims of the theorem follow directly from Theorem \ref{thm:eqlz_dyn}. It thus remains to prove Claim (v). We need two lemmas.
	
	\begin{lemma} \label{lem:iuhd9}
		The construction $\Transfer{}$ defines an endomorphism of the category of left $\TimeB$-actions.
	\end{lemma}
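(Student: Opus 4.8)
The plan is to realise $\Transfer{}$ as an endofunctor on $\FlowCat{\Context}$ by declaring its action on objects and morphisms and then checking functoriality. On objects it sends a left action $\Phi : \TimeB \otimes \Omega \to \Omega$ to the transfer operator $\Transfer{\Phi} : \TimeB \otimes \PathF(\Omega) \to \PathF(\Omega)$ obtained by taking $X = \TimeB$ in \eqref{eqn:def:transfer}; by Theorem \ref{thm:2} this $\Transfer{\Phi}$ is again a left action, hence a genuine object of $\FlowCat{\Context}$. On morphisms, given $f : \Phi \to \Psi$ -- that is, a $\Context$-morphism $f : \Omega \to \Omega'$ satisfying the intertwining square \eqref{eqn:def:dyn:6} -- I would set $\Transfer{f} := \IntHom{\TimeB}{f} : \PathF(\Omega) \to \PathF(\Omega')$, the postcomposition map.

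The heart of the argument is to verify that $\IntHom{\TimeB}{f}$ is itself a morphism of left actions, i.e. that the square
\[\begin{tikzcd}
	\TimeB \otimes \PathF(\Omega) \arrow{d}[swap]{\TimeB \otimes \IntHom{\TimeB}{f}} \arrow{r}{\Transfer{\Phi}} & \PathF(\Omega) \arrow{d}{\IntHom{\TimeB}{f}} \\
	\TimeB \otimes \PathF(\Omega') \arrow{r}{\Transfer{\Psi}} & \PathF(\Omega')
\end{tikzcd}\]
commutes. Since both composites target $\IntHom{\TimeB}{\Omega'}$, by the adjoint formula \eqref{eqn:idl3v:4} it suffices to show their left adjoints -- obtained by tensoring with $\TimeB$ on the left and postcomposing with $\eval_{\Omega'}$ -- agree. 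I would evaluate each side using the defining relation \eqref{eqn:idl3v:3} for the transfer operator together with the naturality identity $\eval_{\Omega'} \circ (\TimeB \otimes \IntHom{\TimeB}{f}) = f \circ \eval_\Omega$. The \emph{right-then-down} composite collapses to $f \circ \Phi \circ (\TimeB \otimes \eval_\Omega) \circ (\swap \otimes \IntHom{\TimeB}{\Omega})$, while the \emph{down-then-right} composite collapses, after sliding $\swap$ past $\IntHom{\TimeB}{f}$ by bifunctoriality of $\otimes$ and invoking naturality a second time, to $\Psi \circ (\TimeB \otimes f) \circ (\TimeB \otimes \eval_\Omega) \circ (\swap \otimes \IntHom{\TimeB}{\Omega})$. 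The morphism hypothesis \eqref{eqn:def:dyn:6} on $f$, namely $\Psi \circ (\TimeB \otimes f) = f \circ \Phi$, identifies these two expressions, closing the square.

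With the square established, functoriality of $\Transfer{}$ is automatic from that of the internal-hom: $\Transfer{\Id_\Omega} = \IntHom{\TimeB}{\Id_\Omega} = \Id_{\PathF(\Omega)}$ and $\Transfer{g \circ f} = \IntHom{\TimeB}{g \circ f} = \IntHom{\TimeB}{g} \circ \IntHom{\TimeB}{f} = \Transfer{g} \circ \Transfer{f}$, so identities and composites are preserved. I expect the only real obstacle to be bookkeeping inside the adjoint diagram chase -- tracking $\swap$ and the two separate evaluation maps $\eval_\Omega$ and $\eval_{\Omega'}$ -- rather than anything conceptual; the single substantive input beyond formal manipulation is the one application of the intertwining condition \eqref{eqn:def:dyn:6}.
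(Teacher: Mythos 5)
Your proposal is correct and takes essentially the same route as the paper: the paper likewise sends $f$ to $\IntHom{\TimeB}{f}$ and verifies the intertwining square by comparing left adjoints, using the defining adjoint relation \eqref{eqn:idl3v:3} for the transfer operator, naturality of $\eval$, bifunctoriality of $\otimes$, and the hypothesis \eqref{eqn:def:dyn:6} on $f$ (the paper assembles these into one large commutative diagram rather than an equational chase). Your explicit appeal to Theorem \ref{thm:2} for the object-level assignment and the closing check of identities and composites via functoriality of the internal hom are routine additions the paper leaves implicit.
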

	
	\begin{proof} Let $f: \Phi \to \Psi$ be a morphism of left $\TimeB$-actions, with domains $\Omega, \Omega'$ respectively. We need to show the following commutation :
		\[\begin{tikzcd}
			\TimeB \otimes \IntHom{ \TimeB }{ \Omega } \arrow[d, "\Transfer{\Phi}"'] \arrow[rr, "\TimeB \otimes \IntHom{ \TimeB }{f}"] && \IntHom{ \TimeB }{ \Omega } \arrow[d, "\Transfer{\Phi'}"] \\
			\TimeB \otimes \IntHom{ \TimeB }{ \Omega' } \arrow[rr, "\IntHom{ \TimeB }{f}"] && \IntHom{ \TimeB }{ \Omega' }
		\end{tikzcd}\]
		The following commutation diagram establishes the equality of the left adjoints of these two morphisms :
		\[\begin{tikzcd} [scale cd = 0.8]
			\TimeB \otimes \TimeB \otimes \IntHom{ \TimeB }{ \Omega } \arrow[d, "\TimeB \otimes \TimeB \otimes \IntHom{ \TimeB }{f }"'] \arrow[rr, "{swap \otimes \IntHom{ \TimeB }{\Omega' }}"] \arrow[rrrrr, bend left=20, "\TimeB \otimes \Transfer{\Phi}"] & & {\TimeB \otimes \TimeB \otimes \IntHom{ \TimeB }{ \Omega }} \arrow[d, "\TimeB \otimes \TimeB \otimes \IntHom{ \TimeB }{f }"'] \arrow[rr, "\TimeB \otimes \eval_{\Omega}"] & & \TimeB \otimes \Omega \arrow[d, "\TimeB \otimes f"'] \arrow[rd, "\Phi"'] & \TimeB \otimes \IntHom{ \TimeB }{ \Omega } \arrow[d, "\eval_{\Omega}"] \arrow[ddr, bend left=30, "\TimeB \otimes \IntHom{ \TimeB }{f}"'] \\
			\TimeB \otimes \TimeB \otimes [\TimeB;\Omega'] \arrow[d, "\TimeB \otimes \Transfer{\Phi'}"] \arrow[rr, "{swap \otimes [\TimeB; \Omega']}"] & & {\TimeB \otimes \TimeB \otimes [\TimeB;\Omega']} \arrow[rr, "\TimeB \otimes \eval_{\Omega'}"] & & \TimeB \otimes \Omega' \arrow[d, "\tilde{\Phi'}"'] & \Omega \arrow[ld, "f"'] & \\
			{\TimeB \otimes [\TimeB;\Omega']} \arrow[rrrr, "\eval_{\Omega'}"] & & & & \Omega' && \TimeB \otimes \IntHom{ \TimeB }{ \Omega' } \arrow[ll, "\eval_{\Omega'}"'] 
		\end{tikzcd}\]
		The two paths along the periphery from $\TimeB \otimes \TimeB \otimes \IntHom{ \TimeB }{ \Omega }$ to $\Omega'$ are the left adjoints of the two paths in the commutative diagram to be proved. The commutation loop at the top is an application of Lemma \ref{lem:idl3v:1} and the definition \eqref{eqn:def:transfer} of the transfer operator. Ever other commutation follows from bi-functoriality.
	\end{proof}
	
	Now, the proof proceeds as follows. Lemmas \ref{lem:iuhd9} and \eqref{eqn:ijd9} together imply that the following morphisms have the same adjoint and therefore coincide:
	\[E_\Phi \xrightarrow{q} \IntHom{\TimeB}{\Omega} \xrightarrow{\IntHom{ \TimeB }{f}} \IntHom{\TimeB}{\Omega'} \xrightarrow[\Transfer{\Psi}^\flat]{\shift^\flat}\IntHom{\TimeB}{\IntHom{\TimeB}{\Omega'}}\]
	Then, the left commutative diagram in \eqref{eqn:thm:3:4} follows immediately from the universal property of the equalizer.
	
	For the second commutative diagram in \eqref{eqn:thm:3:4}, by a similar argument, we can also verify that the following morphisms coincide:
	\[\TimeB \otimes E_\Phi \xrightarrow{\TimeB \otimes E(f)} \TimeB \otimes E_\Psi \xrightarrow{\shiftF{\Psi}} E_\Psi \xrightarrow{q'}\IntHom{\TimeB}{\Omega'}\]
	
	\[\TimeB \otimes E_\Phi \xrightarrow{\shiftF{\Phi}}  E_\Phi \xrightarrow{E(f)} E_\Psi \xrightarrow{q'}\IntHom{\TimeB}{\Omega'}\]
	Again by the universal property of the equalizer, we obtain the desired diagram.
	This completes the proof of Theorem \ref{thm:3}. \qed 
	
	\subsection{ Proof of Equation \eqref{eqn:ijd9}} \label{sec:proof:ijd9}
	
	In a manner analogous of the proof of Lemma \ref{lem:iuhd9} the proof follows from the following commutative diagram :
	\[\begin{tikzcd} [scale cd = 0.8]
		{\TimeB \otimes \IntHom{ \TimeB }{ \Omega' }} \arrow[rrrrrrr, "\eval_{\Omega'}", Shobuj] & & & & & & & \Omega' \\
		& \TimeB \otimes \IntHom{ \TimeB }{ \Omega' } \arrow[rrrrrru, "\eval_{\Omega'}"] & & \TimeB \otimes \IntHom{ \TimeB }{ \Omega } \arrow[ll, "\TimeB \otimes \IntHom{ \TimeB }{f}"] \arrow[rr, "\eval_{\Omega}"] & & \Omega \arrow[rru, "f"'] & & {} \\
		& {\TimeB \otimes \TimeB \otimes \IntHom{ \TimeB }{ \Omega' }} \arrow[u, "{\mu \otimes \IntHom{ \TimeB }{ \Omega' }}"'] \arrow[luu, "\TimeB \otimes \shift_{\Omega'}", bend left, Shobuj] & & \TimeB \otimes \TimeB \otimes \IntHom{ \TimeB }{ \Omega } \arrow[ll, "\TimeB \otimes \TimeB \otimes \IntHom{ \TimeB }{f}"', Shobuj] \arrow[rr, "\TimeB \otimes \shift_{\Omega}", Akashi] \arrow[u, "\mu \otimes \IntHom{ \TimeB }{ \Omega }"'] & & {\TimeB \otimes \IntHom{ \TimeB }{ \Omega }} \arrow[rr, "\TimeB \otimes \IntHom{ \TimeB }{f}", Akashi] \arrow[u, "\eval_{\Omega}"'] & & {\TimeB \otimes \IntHom{ \TimeB }{ \Omega' }} \arrow[uu, "\eval_{\Omega'}"', Akashi]
	\end{tikzcd}\]
	The rightmost commutation square follows from the naturality of the evaluation morphisms, the one in the middle follows from the flow property of $\shift$, and the one on the left follows from bi-functoriality of monoidal product. The commutation square at the top left corner is simply \eqref{eqn:idl3v:2}. The commutation square at the top with one vertex at $\Omega'$ also follows from the naturality of $\eval$.
	The CCW path in green, and the CW path in purple are the left adjoints to the CW and CCW paths respectively in \eqref{eqn:ijd9}, thus proving the commutation. \qed
	
	\subsection{Proof of Theorem \ref{thm:6}} \label{sec:proof:thm:6}
	
	The proof proceeds in two steps. First we show that 
	\begin{equation} \label{eqn:thm:6:1}
		\shift^\flat \circ \Phi^\flat = \Transfer{\Phi}^\flat \circ \Phi^\flat
	\end{equation}
	We prove \eqref{eqn:thm:6:1} by proving that the double left adjoints (d.l.a) of the the two sides of the equation are equal. The d.l.a of the LHS of \eqref{eqn:thm:6:1} is the clockwise path shown below :
	\begin{equation} \label{eqn:thm:6:2}
		\begin{tikzcd}
			\TimeB \otimes \TimeB \otimes \Omega \arrow[rr, "T\otimes T\otimes \Phi^\flat"] \arrow[d, "\mu \otimes \Omega"] &  & {T\otimes T\otimes \IntHom{ \TimeB }{ \Omega }} \arrow[d, "{\mu \otimes \IntHom{ \TimeB }{ \Omega }}"] \\
			\TimeB \otimes \Omega \arrow[rr, "\TimeB \otimes \Phi^\flat"] \arrow[rrd, "\Phi"]                               &  & {\TimeB \otimes \IntHom{ \TimeB }{ \Omega }} \arrow[d, "\eval_\Omega"]                \\
			&  & \Omega 
		\end{tikzcd}
	\end{equation}
	The upper commutation square follows from the bi-functoriality of $\otimes$. The lower commutation triangle follows from \eqref{eqn:idl3v:1}. Overall, we get an equivalent expression for the d.l.a of the LHS of \eqref{eqn:thm:6:1}. We next study the d.l.a. of the RHS of \eqref{eqn:thm:6:1}. It is given by the clockwise path in the diagram below :
	\begin{equation} \label{eqn:thm:6:3}
		\begin{tikzcd}
			\TimeB \otimes \TimeB \otimes \Omega \arrow[rr, "T\otimes T\otimes \Phi^\flat"] \arrow[d, "\swap\otimes \Omega"] \arrow[dd, "\mu \otimes \Omega"', bend right = 50] &  & {T\otimes T\otimes \IntHom{ \TimeB }{ \Omega }} \arrow[d, "{\swap \otimes \IntHom{ \TimeB }{ \Omega }}"] \\
			\TimeB \otimes \TimeB \otimes \Omega \arrow[rr, "\TimeB \otimes T\otimes \Phi^\flat"] \arrow[rrd, "T \otimes\Phi"] \arrow[d, "\mu \otimes \Omega"] &  & {\TimeB \otimes T\otimes \IntHom{ \TimeB }{ \Omega }} \arrow[d, "\TimeB \otimes \eval_\Omega"]       \\
			T\otimes \Omega \arrow[rrd, "\Phi"] &  & T \otimes\Omega \arrow[d, "\Phi"] \\
			&  & \Omega       
		\end{tikzcd}
	\end{equation}
	Diagrams \eqref{eqn:thm:6:2} and \eqref{eqn:thm:6:3} establish the identity in \eqref{eqn:thm:6:1}. We have thus proven that $\Phi^\flat$ equalizes $\shift^\flat$ and $\Transfer{\Phi}^\flat$. We next prove that it is fact the equalizer. So we assume a commutation of the form
	\[\begin{tikzcd}
		A \arrow[r, "\alpha"] & {\IntHom{ \TimeB }{ \Omega }} \arrow[r, "\shift^\flat", bend left] \arrow[r, "\Transfer{\Phi}"', bend right] & {[T, \IntHom{ \TimeB }{ \Omega }]}
	\end{tikzcd}\]
	and show that $\alpha$ must factorize uniquely through $\Phi^\flat$. Define $\tilde{\alpha}: A \to \Omega$ as
	\[\begin{tikzcd}
		A \arrow[r, "\lambda"] & 1_{\Context} \otimes A \arrow[r, "\start_* \otimes A"] & \TimeB \otimes A \arrow[r, "\TimeB \otimes \alpha"] & {\TimeB \otimes \IntHom{ \TimeB }{ \Omega }} \arrow[r, "\eval_\Omega"] & \Omega
	\end{tikzcd}\]
	To verify that $\alpha = \Phi^\flat \circ \tilde{\alpha}$, we need the following commutation diagram :
	\[\begin{tikzcd}
		\TimeB \otimes \TimeB \otimes A \arrow[r, "T\otimes T\otimes \alpha"] & {\TimeB \otimes \TimeB \otimes \IntHom{ \TimeB }{ \Omega }} \arrow[rrr, "{(\TimeB \otimes \eval) \circ (\swap \otimes \IntHom{ \TimeB }{ \Omega })}"] \arrow[d, "{\mu \otimes \IntHom{ \TimeB }{ \Omega }}"] &&& \TimeB \otimes \Omega \arrow[d, "\Phi"] \\
		& {T\otimes \IntHom{ \TimeB }{ \Omega }} \arrow[rrr, "\eval_\Omega"] &&& \Omega 
	\end{tikzcd}\]
	Using this commutativity, we obtain the following diagram, which verifies the desired factorization of $\alpha$ by comparing their adjoints :
	\[\begin{tikzcd} [scale cd = 0.8, column sep = large]
		\TimeB \otimes A \arrow[r, "\lambda_{\TimeB}^{(R)-1} \otimes A", "\cong"'] \arrow[d, "\lambda_{\TimeB}^{(L)-1} \otimes A"', "\cong"] & \TimeB \otimes 1_{\Context} \otimes A \arrow[r, "\TimeB \otimes \start_* \otimes A"] \arrow[ld, "\swap \otimes A"', "\cong"] & \TimeB \otimes \TimeB \otimes A \arrow[r, "\TimeB \otimes \TimeB \otimes \alpha"] \arrow[ld, "\swap\otimes A"'] & {T\otimes T\otimes \IntHom{ \TimeB }{ \Omega }} \arrow[ld, "{\swap\otimes \IntHom{ \TimeB }{ \Omega }}"'] \arrow[rd, "\TimeB \otimes \eval_\Omega"] &\\
		1_{\Context} \otimes \TimeB \otimes A \arrow[r, "\start_* \otimes \TimeB \otimes A"] \arrow[rd, "\lambda_{\TimeB}^{(L)} \otimes A"', "\cong"] & \TimeB \otimes \TimeB \otimes A \arrow[r, "\TimeB \otimes \TimeB \otimes \alpha"] \arrow[d, "\mu \otimes A"] & {\TimeB \otimes \TimeB \otimes \IntHom{ \TimeB }{ \Omega }} \arrow[r, "{\swap \otimes \IntHom{ \TimeB }{ \Omega }}"] \arrow[d, "{\mu \otimes \IntHom{ \TimeB }{ \Omega }}"] & {\TimeB \otimes \TimeB \otimes \IntHom{ \TimeB }{ \Omega }} \arrow[r, "\TimeB \otimes \eval_\Omega"] & \TimeB \otimes \Omega \arrow[ld, "\Phi"] \arrow[d, "\TimeB \otimes \Phi^\flat"] \\
		& \TimeB \otimes A \arrow[r, "\TimeB \otimes \alpha"] & {\TimeB \otimes \IntHom{ \TimeB }{ \Omega }} \arrow[r, "\eval_\Omega"] & \Omega  &  \TimeB \otimes \IntHom{\TimeB}{\Omega} \arrow[l, "\eval_{\TimeB, \Omega}"]
	\end{tikzcd}\]
	Note that the the upper CCW path from $\TimeB \otimes A$ to $\Omega$ is the left adjoint of $\Phi^\flat \circ \tilde{\alpha}$, and the lower CW path between the same pair is the left adjoint of $\alpha$.
	
	We show the uniqueness of the factorization. Let $ \alpha = \Phi^\flat \circ \beta$ for some morphism $\beta$. Then, we have the following diagram:
	\[\begin{tikzcd}
		A \arrow[r, "\lambda_A^{(L)-1}"] \arrow[d, "\beta"] & 1_{\Context} \otimes A \arrow[r, "\start_* \otimes A"] \arrow[d, "1_{\Context} \otimes \beta"] & \TimeB \otimes A \arrow[r, "\TimeB \otimes \alpha"] \arrow[d, "\TimeB \otimes \beta"] & {\TimeB \otimes \IntHom{ \TimeB }{ \Omega }} \arrow[d, "\eval_\Omega"] \\
		\Omega \arrow[r, "\lambda_\Omega^{(L)-1}"] & 1_{\Context} \otimes \Omega \arrow[r, "\start_* \otimes \Omega"] \arrow[rr, "\lambda_\Omega^{(L)}"', bend right] & \TimeB \otimes \Omega \arrow[r, "\Phi"] & \Omega                     
	\end{tikzcd}\]
	Therefore, $\beta = \tilde{\alpha}$. This completes the proof of Theorem \ref{thm:6}. \qed  
	
	\subsection{Proof of Theorem \ref{thm:sttnry}} \label{sec:proof:sttnry}
	
	Condition (i), which assumes that $\tilde{\omega}$ is an enriched natural functor is equivalent to the commutation relation \eqref{eqn:def:enrich_nat:Time:2}. The latter is simply \eqref{eqn:def:enrich_nat:cone:2} for the case $\calX = \Time$ and $\calY = \Context$. Now by \eqref{eqn:ibx9z3},  \eqref{eqn:def:enrich_nat:cone:2} is equivalent to \eqref{eqn:def:enrich_nat:cone:3}. Thus conditions (i), (ii) and (iii) are equivalent. We next show that this implies (iv). By Assumption \ref{A:trmnl} we have $\IntHom{a}{1} \cong 1$ for every $a\in \Context$. We prove \eqref{eqn:jd92} by establishing two identities.  Firstly :
	\begin{equation} \label{eqn:jd92:1}
		\begin{tikzcd}
			1_{\Context} \otimes \TimeB \arrow[dr, "\omega_* \otimes !"] \arrow[d, dashed] \\
			\IntHom{1_{\Context}}{\Omega} & \IntHom{1_{\Context}}{\Omega} \otimes \IntHom{1_{\Context}}{1_{\Context}} \arrow[l, "\circ"'] \\
		\end{tikzcd}
		\begin{tikzcd} {} \arrow[rr, "\mbox{right}", "\mbox{adjoint}"'] && {} \end{tikzcd}
		\begin{tikzcd}
			1_{\Context} \otimes \TimeB \otimes 1_{\Context} \arrow[d, dashed] \arrow[r, "\cong"] & \TimeB \otimes 1_{\Context} \arrow[d, "!"] \\
			\Omega & 1_{\Context} \arrow[l, "\omega"]
		\end{tikzcd}
	\end{equation}
	The dashed arrows in the two diagrams are thus left and right duals of each other. The next observation is 
	\begin{equation} \label{eqn:jd92:2}
		\begin{tikzcd} [scale cd = 0.7]
			1_{\Context} \otimes \TimeB \arrow[d, dashed] \arrow[r, "\lambda_T", "\cong"'] & \TimeB \arrow[r, " ( \rho_T )^{-1} ", "\cong"'] & \TimeB \otimes 1_{\Context} \arrow[dl, "\Phi^\sharp \otimes \tilde{\omega}"] \\
			\IntHom{1_{\Context}}{\Omega} & \IntHom{\Omega}{\Omega} \otimes \IntHom{1_{\Context}}{\Omega} \arrow[l, "\circ"']
		\end{tikzcd}
		\begin{tikzcd} {} \arrow[rr, "\mbox{right}", "\mbox{adjoint}"'] && {} \end{tikzcd}
		\begin{tikzcd} [scale cd = 0.7]
			1_{\Context} \otimes 1_{\Context} \otimes \TimeB \arrow[d, dashed] \arrow[rr, "1_{\Context} \otimes \swap_{1,T}"] && 1_{\Context} \otimes \TimeB \otimes 1_{\Context} \arrow[d, "\lambda_T \otimes 1_{\Context}"] \\
			\Omega & \TimeB \otimes \Omega \arrow[l, "\Phi"'] & \TimeB \otimes 1_{\Context} \arrow[l, "\TimeB \otimes \omega"']
		\end{tikzcd}
	\end{equation}
	Before proving \eqref{eqn:jd92:1} and \eqref{eqn:jd92:2} we note how it proves the claim of the theorem. The composite morphisms in the left diagrams of \eqref{eqn:jd92:1} and \eqref{eqn:jd92:2} are the two commuting morphisms in \eqref{eqn:def:enrich_nat:cone:3}. Thus they are equal. So their right adjoints are the same too. This means that the composite morphisms in the right diagrams of \eqref{eqn:jd92:1} and \eqref{eqn:jd92:2} are equal. But they are isomorphic to the two paths of \eqref{eqn:jd92}, thus making them commute. It remains to prove \eqref{eqn:jd92:1} and \eqref{eqn:jd92:2}. We prove them by proving commutation of their adjoints. 
	
	The adjoints of the two paths in \eqref{eqn:jd92:1} appear on the periphery of the following  diagram : 
	\[\begin{tikzcd} [scale cd = 0.8]
		&&&& 1_{\Context} \otimes \IntHom{1_{\Context}}{\Omega} \otimes \IntHom{1_{\Context}}{1_{\Context}} \arrow[d, "1_{\Context} \otimes \circ"] \\
		1_{\Context} \otimes \TimeB \otimes 1_{\Context} \arrow[rr, "1_{\Context} \otimes ! \otimes \omega_*"] \arrow[rd, "1_{\Context} \otimes ! \otimes 1_{\Context}"] \arrow[ddd, "\lambda_{T} \otimes 1_{\Context}"] & & 1_{\Context} \otimes \IntHom{1_{\Context}}{1_{\Context}} \otimes \IntHom{1_{\Context}}{\Omega} \arrow[urr, "1_{\Context} \otimes \swap"] \arrow[d, "\eval_{1,1} \otimes \IntHom{1_{\Context}}{\Omega}"] && 1_{\Context} \otimes \IntHom{1_{\Context}}{\Omega} \arrow[d, "\eval_{1,\Omega}"] \\
		& 1_{\Context} \otimes 1_{\Context} \otimes \IntHom{1_{\Context}}{1_{\Context}} \arrow[rd, "1_{\Context} \otimes \eval_{1,1}"] & 1_{\Context} \otimes \IntHom{1_{\Context}}{\Omega} \arrow[rr, "\eval_{1,1}"] && \Omega \\
		& & 1_{\Context} \otimes 1_{\Context} \arrow[u, "1_{\Context} \otimes \omega_*"'] \arrow[rr, "\cong"] && 1_{\Context} \arrow[u, "\omega"] \\
		\TimeB \otimes 1_{\Context} \arrow[rru, "! \otimes 1_{\Context}"] \arrow[rrrru, bend right=10, "!\otimes 1_{\Context}"] & & & 
	\end{tikzcd}\]
	The adjoints of the two paths in \eqref{eqn:jd92:2} appear on the periphery of the following  diagram : 
	\[\begin{tikzcd} [scale cd = 0.8]
		1_{\Context} \otimes \TimeB \otimes 1_{\Context} \arrow[r] & \TimeB \otimes 1_{\Context} \arrow[r] \arrow[d] & 1_{\Context} \otimes 1_{\Context} \otimes \TimeB \arrow[rr, "1_{\Context} \otimes \omega_* \otimes \Phi^\sharp"] \arrow[ld] \arrow[d, "1_{\Context} \otimes \tilde{\omega} \otimes \TimeB"] & & 1_{\Context} \otimes \IntHom{1_{\Context}}{\Omega} \otimes \IntHom{\Omega}{\Omega} \arrow[r, "1_{\Context} \otimes \swap"] \arrow[d, "\eval_{1, \Omega} \otimes \IntHom{\Omega}{\Omega}"] & 1_{\Context} \otimes \IntHom{\Omega}{\Omega} \otimes \IntHom{1_{\Context}}{\Omega} \arrow[d, "1_{\Context} \otimes \circ"] \\
		& 1_{\Context} \otimes \TimeB \arrow[rrrd, bend right=10, "\omega \otimes \TimeB"] & 1_{\Context} \otimes \IntHom{1_{\Context}}{\Omega} \otimes \TimeB \arrow[rrd, "\eval_{1,\Omega} \otimes \TimeB"] & & \Omega \otimes \IntHom{\Omega}{\Omega} \arrow[dr, "\eval"] & 1_{\Context} \otimes \IntHom{1_{\Context}}{\Omega} \arrow[d, "\eval_{1, \Omega}"] \\
		& & & & \Omega \otimes \TimeB \arrow[u, "\Phi^\sharp \otimes \Omega"'] \arrow[r, "\Phi"'] & \Omega
	\end{tikzcd}\]
	This completes the proof of Theorem \ref{thm:sttnry}. \qed
	
	\paragraph{Acknowledgments} 
	
	\bibliographystyle{\Path unsrt}
	\bibliography{\Path References,Ref}
\end{document}